\newtheorem{theorem}{Theorem}[section]
\newtheorem{lemma}[theorem]{Lemma}
\newtheorem{proposition}[theorem]{Proposition}
\newtheorem{corollary}[theorem]{Corollary}
\theoremstyle{definition}
\newtheorem{definition}[theorem]{Definition}
\newtheorem{example}[theorem]{Example}
\newtheorem{remark}[theorem]{Remark}
\newtheorem{notation}[theorem]{Notation}
\numberwithin{equation}{section} \setcounter{page}{1}
\DeclareMathOperator*{\esssup}{ess\,sup}
\begin{document}

\title[Linear Differential-Algebraic Operators
with Periodic Coefficients]{On the Spectral Theory of Linear Differential-Algebraic Equations
with Periodic Coefficients}

\author{Bader Alshammari and Aaron Welters}
\email{awelters@fit.edu}
\address{Department of Mathematical Sciences\\ Florida Institute of Technology\\ Melbourne, FL 32937, USA}%

\date{\today}

\thanks{*This work is based in part on the Ph.D.\ dissertation of the first author.}

\subjclass[2020] {34A09, 34L05, 47B25, 47A75, 78M22, 46N20, 47N20, 47B38, 47B93, 47N50}

\keywords{linear differential-algebraic equations with periodic coefficients, canonical DAEs, differential-algebraic operators, functional analysis, unbounded operators, spectral theory, self-adjoint, eigenvalue problems, electromagnetism, photonic crystals, layered (stratified) media}

\begin{abstract}
In this paper, we consider the spectral theory of linear differential-algebraic equations (DAEs) for periodic DAEs in canonical form, i.e.,
\begin{equation*}
   J \frac{df}{dt}+Hf=\lambda Wf,  
\end{equation*}
 where  $J$ is a constant skew-Hermitian $n\times n$ matrix that is not invertible, both $H=H(t)$ and $W=W(t)$ are $d$-periodic Hermitian $n\times n$-matrices with Lebesgue measurable functions as entries, and $W(t)$ is positive semidefinite and invertible for a.e.\ $t\in \mathbb{R}$ (i.e., Lebesgue almost everywhere). Under some additional hypotheses on $H$ and $W$, called the local index-1 hypotheses, we study the maximal and the minimal operators $L$ and $L_0'$, respectively, associated with the differential-algebraic operator  $\mathcal{L}=W^{-1}(J\frac{d}{dt}+H)$, both treated as an unbounded operators in a Hilbert space $L^2(\mathbb{R};W)$ of weighted square-integrable vector-valued functions. We prove the following: (i) the minimal operator $L_0'$ is a densely defined and closable operator; (ii) the maximal operator $L$ is the closure of $L_0'$; (iii) $L$ is a self-adjoint operator on $L^2(\mathbb{R};W)$ with no eigenvalues of finite multiplicity, but may have eigenvalues of infinite multiplicity. As an important application, we show that for 1D photonic crystals with passive lossless media, Maxwell's equations for the electromagnetic fields become, under separation of variables, periodic DAEs in canonical form satisfying our hypotheses so that our spectral theory applies to them (a primary motivation for this paper).$^*$
\end{abstract}
\maketitle

\section{Introduction}\label{sec:intro}
The spectral theory of linear differential-algebraic equations (DAEs) with periodic coefficients is of significant interest from both a
theoretical point of view and in applications (see, for instance, \cite{98LM, 00AD, 03LM, 10DV} and \cite[Sec.\ 6.6.7]{05RK}) and this is especially true for electromagnetic problems involving 1D photonic crystals (see, for instance, \cite{FV, AW, 12SW, 13SW, 16SW}). For linear ordinary differential equations (ODEs), the spectral theory is well-developed (see, for instance, \cite{DE, 87JW}). But this is not true for DAEs when the interval under consideration is unbounded or when the spectral problem is posed on a Hilbert space of weighted $L^2$ functions. Yet, such DAEs arise naturally when considering electromagnetic phenomena in periodic layered media (1D photonic crystals) involving passive lossless media (especially when the materials as anisotropic, biisotropic, or bianisotropic instead of isotropic). In this case, solving for the time-harmonic electromagnetic fields using separation of variables, Maxwell's equations are reduced to exactly such a spectral problem on the whole real line for DAEs in canonical form.

Motivated by such applications, we will consider the spectral theory for periodic DAEs in the following implicit canonical (cf.\ \cite{90KU}) or Hamiltonian form:
\begin{equation}
   J \frac{df}{dt}+Hf=\lambda Wf,  \label{def:intro:DAEsStandardForm}
\end{equation}
 where  $J$ is a constant skew-Hermitian $n\times n$ matrix that is not invertible (i.e., $\det J=0$), both $H=H(t)$ and $W=W(t)$ are $d$-periodic Hermitian $n\times n$ matrices with Lebesgue measurable functions as entries, and $W(t)$ is positive semidefinite and invertible for a.e.\ $t\in \mathbb{R}$ (i.e., Lebesgue almost everywhere).

In this paper we study the maximal and the minimal operators $L$ and $L_0'$ (see Def.\ \ref{DefMinMaxOpGenerByTheDAEsOp}), respectively, associated with the differential-algebraic (DA) operator  $\mathcal{L}=W^{-1}(J\frac{d}{dt}+H)$ (see Def.\ \ref{def:DiffAlgDAEOp}), both treated as an unbounded operators in a Hilbert space $L^2(\mathbb{R};W)$ [as defined by (\ref{def:WeightedHilbertSpL2Funcs}) with inner product (\ref{def:WeightedHilbertSpL2FuncsInnerProd}); see also Remark \ref{rem:IsometricIsomorphismWeightedHilbertSpace}] of weighted square-integrable vector-valued functions. Our main result is Theorem \ref{thm:MainResultOfThePaper} [that requires the local index-1 hypotheses (see Def.\ \ref{DefIndex1Hyp}) are true for $H-z_0W, W$ with respect to $J$ on the interval $\mathbb{R}$ for some $z_0\in\mathbb{C}\setminus\mathbb{R}$] which can be summarized as follows: (i) the minimal operator $L_0'$ is a densely defined and closable operator; (ii) the maximal operator $L$ is the closure of $L_0'$ (i.e, $L=\overline{L_0'}$); (iii) $L$ is a self-adjoint operator on $L^2(\mathbb{R};W)$ (that is, $L^*=L$) with no eigenvalues of finite multiplicity, but may have eigenvalues of infinite multiplicity [or, equivalently, $\dim \ker(L-\lambda I)\in \{0,\infty\}$ for each $\lambda\in\mathbb{C}$]. In addition, we give two examples of such an $L$ that does have an eigenvalue of infinite multiplicity (see Examples \ref{ex:MaxOpSelfAdjWithEigenvalueInftMult} and \ref{ex:GenerizationOfExMaxOpSelfAdjWithEigenvalueInftMult}).

An important result of this paper is Theorem \ref{thm:StrongerIndex1Hyp} which gives simple hypotheses on the coefficients $H,W$ in terms of $J$ that are sufficient for Theorem \ref{thm:MainResultOfThePaper} to be true [i.e., for (i)-(iii) to be true] and they can be stated as follows: Let $v_1,\ldots, v_{n_1}$ and $v_{n_1+1},\ldots, v_{n}$ be an orthonormal basis of $n\times 1$ column vectors for the range of $J$ and kernel (nullspace) of $J$, respectively, and define the $n\times n$ unitary matrix $V$ to be the column matrix $V=[v_1|\cdots|v_{n_1}|v_{n_1+1}|\cdots|v_{n}]$. We next define $n_2=n-n_1$ and the $n_i\times n_j$ matrix-valued functions $H_{ij}$ and $W_{ij}$ (for $i,j=1,2$) by forming a $2\times 2$ block matrix partitioning of $V^{-1}HV$ and $V^{-1}WV$ as
\begin{align}
    V^{-1}HV=\begin{bmatrix}
    H_{11} & H_{12}\\
    H_{21} & H_{22}
    \end{bmatrix},\;V^{-1}WV=\begin{bmatrix}
    W_{11} & W_{12}\\
    W_{21} & W_{22}
    \end{bmatrix}.
\end{align}
Then Theorem \ref{thm:StrongerIndex1Hyp} tells us that if all the entries of $H_{11}, W_{11},$ and $H_{12}W_{22}^{-1}H_{21}$ are (i.e., integrable on the interval $[0,d]$) then Theorem \ref{thm:MainResultOfThePaper} is true.

Let us compare all this to the spectral theory for periodic ODEs in the form (\ref{def:intro:DAEsStandardForm}) where $J$ is invertible ($\det J\not=0$), also known as canonical or Hamiltonian (system of) ODEs (see, for instance, \cite{64VA, 75YSv1, DE, 87JW, 02DK, 12LS, 12AD, 13SS, 18CR, 20BS}). In \cite{DE}, V.\ I. Derguzov proves, under the hypotheses $H(t)$ and $W(t)$ are also smooth symmetric matrix-valued functions, that the maximal operator $L$ is self-adjoint on $L^2(\mathbb{R};W)$ by giving an explicit spectral representation of $L$ expanded in terms of the Bloch solutions of the periodic ODEs (\ref{def:intro:DAEsStandardForm}) which proves $L$ has purely absolutely continuous (AC) spectrum and is unitarily equivalent to multiplication by $\lambda$ in a special Hilbert space of square-integrable functions.  Furthermore, he proves that the spectral multiplicity at each $\lambda$ is equal to the number of linearly independent bounded solutions (corresponding to the dimension of the space of those Bloch solutions) of (\ref{def:intro:DAEsStandardForm}). In \cite{87JW},  J.\ Weidmannn has shown using different methods that you can weaken the hypotheses that Derguzov had to $H$ and $W$ are Hermitian with entries in $L^1([0,d])$ and still prove the maximal operator $L$ is self-adjoint on $L^2(\mathbb{R};W)$ \cite[Theorem 12.3]{87JW} with purely AC spectrum and $L=\overline{L_0'}$ is the closure of the minimal operator $L_0'$ \cite[Theorem 12.4]{87JW}. In particular, the self-adjoint operator $L$ has no eigenvalues. It should be pointed out that we are not aware of any weaker hypotheses that allow such a broad class of coefficients $H, W$ which yield these results and in this sense, \cite{87JW} is the ``state-of-the-art" for the spectral theory of canonical ODEs with periodic coefficients. 

Our goal here is to develop such a spectral theory for DAEs in the form (\ref{def:intro:DAEsStandardForm}) that is comparable to those results above for ODEs, under the weakest hypotheses on $H,W$ we can get. Based on techniques from \cite{87JW} adapted to DAEs together with the spectral theory for unbounded operators \cite{75RS, 80RS} and the theory of Schur complements \cite{05FZ}, we are able to prove the results (i)-(iii) in Theorem \ref{thm:MainResultOfThePaper} which represent significant progress toward this goal. Furthermore, as the local index-$1$ hypotheses in that theorem may not be easy to verify in applications, we also provide Proposition \ref{prop:MainResultOfPaperOnIndex1HypEquiv} and Theorem \ref{thm:StrongerIndex1Hyp} to help simplify those hypotheses.  Finally, given the difficulties of proving our main results (i)--(iii), we do not try to prove the self-adjoint operator $L$ has no singular continuous spectrum (as done for ODEs in \cite{DE} and \cite{87JW}) nor do we try to develop an analog spectral representation of $L$ (as was done in \cite{DE} for ODEs) in terms of an expansion in Bloch solutions. Nevertheless, for a complete spectral theory on $L$, it would be desirable to do so for the applications mentioned above, but this is left for future work.

As a final remark, the literature on DAEs using functional analysis and operator theory that is closest in spirit to our approach, especially Section \ref{sec:MinMaxDAEOps}, is \cite{89MH, 07SZ,15RM, 20HR}. They are mainly interested in closedness and normal solvability of the maximal operator $L$ on bounded or compact intervals $I$ for Hilbert spaces of $L^2$ functions and its adjoint relationship to the minimal operator $L_0'$. However, they do not treat the self-adjoint spectral theory of DAEs nor do they treat unbounded intervals nor weighted Hilbert spaces of $L^2$ functions as we need to in this paper. In addition, if we were to use the approach that they do then the weakest hypotheses on $H, W$ that we could use would require its entries to be in $L^{\infty}$ (i.e., essentially bounded functions) among other things. Nevertheless, these works are complementary to our paper.

The rest of this paper is organized as follows. In Section \ref{sec:1DPCs}, we motivate our studies on the spectral theory of periodic DAEs by considering electromagnetism and Maxwell's equations associated with 1D photonic crystals and passive lossless media. In Section \ref{sec:Preliminaries}, we briefly introduce some basic notations, conventions, and definitions for matrices, operators, and functions spaces needed. We also include some preliminary results from the elementary theory of linear ODEs and spectral theory for unbounded operators. In Section \ref{sec:MinMaxDAEOps}, we study the differential operators $L_0', L, \mathcal{L}$ associated with the DAEs on an arbitrary nonempty interval $I\subseteq \mathbb{R}$ (both for bounded as well as unbounded intervals). In Section \ref{sec:SpecThyPeriodicDAEs}, we study the periodic case of these operators (with $I=\mathbb{R}$) and prove our main results (i)--(iii) above. Finally, in Subsec.\ \ref{subsec:SimplifyIndex1Hyp}, we show how to simplify the hypotheses needed to prove those results.

\section{\label{sec:1DPCs}One-dimensional photonic crystals}
In this section, we motivate our studies on periodic linear DAEs by showing how they arise in electromagnetism (EM) involving 1D photonic crystals. In particular, starting from Maxwell's equations, we consider time-harmonic electromagnetic fields in a periodic layered media (i.e., a 1D photonic crystal) with passive lossless materials, and show how to proceed to reduce the problem from one involving partial differential equations (PDEs) to periodic linear DAEs in the canonical form (\ref{def:intro:DAEsStandardForm}). 

To set up the problem, we recall that Maxwell's equations describe the electric and magnetic fields arising from distributions of electric charges and currents, and how those fields change in time (for more details, see \cite{75JJ, 95LL, 86JK, 11JJ}). Mathematically, the (macroscopic)
Maxwell's equations (in Gaussian units) are the PDEs:
\begin{gather}\label{eq1}
  \nabla\times E=-\frac{1}{c}\frac{\partial B}{\partial t },\qquad \nabla\times H=\frac{4\pi}{c}J+\frac{1}{c}\frac{\partial D}{\partial t },
\end{gather}
\begin{gather}\label{eq3}
  \nabla\cdot D=4\pi \rho,\qquad  \nabla\cdot B=0,
\end{gather}
where $E$ and $H$ are the (macroscopic) electric and magnetic fields, $D$ and $B$ are the electric displacement and magnetic induction fields, $\rho$ and $J$ are the free charge and current densities (with $\rho=0, J=0$ in there are no sources), and $c$ is the speed of light (in a vacuum).

For passive lossless (linear) homogeneous media, the constitutive relations are of the form
\begin{gather}
    D=\varepsilon E + \xi H,\label{def:ConstitutiveRelations1}\\
    B=\zeta E + \mu H,\label{def:ConstitutiveRelations2}
\end{gather}
where $\varepsilon,\mu,\xi,\mu\in M_{3}(\mathbb{C})$ (dielectric permittivity, magnetic permeability, and magnetoelectric coupling tensors), i.e., are $3\times 3$ matrices with entries in $\mathbb{C}$, such that the $2\times 2$ block matrix $M=[M_{ij}]_{i,j=1,2}\in M_6(\mathbb{C})$ defined by
\begin{gather}
    M=\begin{bmatrix}
    \varepsilon & \xi\\
    \zeta & \mu
    \end{bmatrix},\label{def:ConstitutiveRelations3}
\end{gather}
satisfies the following self-adjoint and positivity constraints (see, for instance, \cite[Sec.\ 80]{95LL}, \cite{95LS, 14AW}):
\begin{gather}
    M^*=M>0,\label{def:ConstitutiveRelations4}
\end{gather}
(i.e., $M$ is a Hermitian matrix that is positive semidefinite and invertible). In particular, we have
\begin{align}
    \varepsilon^*=\varepsilon,\;\mu^*=\mu,\;\zeta=\xi^*.
\end{align}

The media can now be classified as follows (cf.\ \cite{86JK, 94LS, 07AS, 09SL}). In the case $\xi=0$, the media is called isotropic if $\varepsilon$ and $\mu$ are scalar matrices (i.e., scalar multiples of $I_3$, the $3\times 3$ identity matrix) and if not then it is anisotropic. In the case $\xi\not =0$, the media is called biisotropic if $\varepsilon, \mu, \xi$ are scalar matrices and if not then it is bianisotropic.

We consider now a one-dimensional (1D) photonic crystal made of plane parallel layers of these materials. More precisely, a $d$-periodic layered (or stratified) medium whose layers are normal to $z$-axis, homogeneous in the $xy$-plane, and consisting only of passive linear lossless layers for a.e.\ $z\in \mathbb{R}$, i.e., the constitutive relations are of the form (\ref{def:ConstitutiveRelations1}, \ref{def:ConstitutiveRelations2}), but now the matrix $M$ in (\ref{def:ConstitutiveRelations3}) is a $6\times 6$ matrix-valued function $M=M(z)\in M_6(\mathcal{M}(\mathbb{R}))$ satisfying
\begin{gather}
    M(z+d)=M(z),\;\;M(z)^*=M(z)>0 \text{ for a.e. }z\in \mathbb{R}.
\end{gather}
Then Maxwell's equations without sources (i.e., $\rho=0,J=0$) for the electromagnetic fields $E,H$ in this medium admit time-harmonic solutions of the form
\begin{align}
    E=E(z)e^{i(k_1x+k_2y-\omega t)},\;H=H(z)e^{i(k_1x+k_2y-\omega t)},\label{DefTimeHarmonicEMFieldsZDir}
\end{align}
where $\omega$ is the frequency, $i=\sqrt{-1}$, and $(k_1,k_2)\in \mathbb{R}^2$ is the wavevector parallel to the layers. Using separation of variables, Maxwell's equations (without sources) for these time-harmonic solutions are reduced to the \textbf{periodic Maxwell's DAEs}: 

\begin{subequations}\label{def:MaxwellsPeriodicDAEs}
\begin{equation}
       J\frac{d}{dz}f+ Hf=\lambda Wf,\tag{\ref{def:MaxwellsPeriodicDAEs}}
     \end{equation}
\begin{align}
    &\lambda=\frac{\omega}{c},\label{def:MaxwellsPeriodicDAEsLambda}\\
    &f(z)=\begin{bmatrix}
    E(z)\\
    H(z)
    \end{bmatrix},\label{def:MaxwellsPeriodicDAEsfFunction}\\
    &J=i^{-1}\begin{bmatrix}
    0 & -e_3\times \\
      e_3\times & 0 \\
    \end{bmatrix},\, e_3\times=\begin{bmatrix}
    0 & -1&0 \\
     1 & 0&0 \\
     0&0&0\\
    \end{bmatrix},\label{def:MaxwellsPeriodicDAEsJ}\\
    &H=\begin{bmatrix}
                                0 & k_\bot\times \\
                                -k_\bot\times & 0 \\
                              \end{bmatrix},\, k_\bot\times=\begin{bmatrix}
                        0 & 0 & k_2 \\
                        0 & 0 & -k_1 \\
                        -k_2 & k_1 & 0 \\
                      \end{bmatrix},\label{def:MaxwellsPeriodicDAEsH}\\
    &W=M=\begin{bmatrix}
    \varepsilon & \xi\\
    \zeta & \mu
    \end{bmatrix}.\label{def:MaxwellsPeriodicDAEsW}
\end{align}
\end{subequations}
In particular, these are periodic DAEs in canonical form (\ref{def:intro:DAEsStandardForm}) (with the independent variable $t$ replaced by $z$) such that $J$ is a constant skew-Hermitian $6\times 6$ matrix that is not invertible (i.e., $\det J=0$), both $H$ (a constant function of $z$) and $W=W(z)$ are $d$-periodic Hermitian $6\times 6$ matrices with Lebesgue measurable functions as entries, and $W(z)$ is positive semidefinite and invertible for a.e.\ $z\in \mathbb{R}$.

Two other types of applied electromagnetic problems for 1D photonic crystals that are worth briefly mentioning which have the canonical form (\ref{def:MaxwellsPeriodicDAEs}), where $f,J$ remains the same, but the $\lambda, H,W$  (assuming $\omega >0$) change as follows.  
  \begin{itemize} 
      \item Disorder added to the periodic unit cell: e.g., $\varepsilon(\lambda):=\varepsilon_0+\lambda\varepsilon_1,$
      $\mu(\lambda):=\mu_0+\lambda\mu_1$ ($\lambda$ scales amount of disorder),
      \begin{align*}
          H=\left(
                              \begin{array}{cc}
                                0 & k_\bot\times \\
                                -k_\bot\times & 0 \\
                              \end{array}
                            \right)-\frac{\omega}{c}\left(
                      \begin{array}{cc}
                        \varepsilon_0 & 0 \\
                        0 & \mu_0 \\
                      \end{array}
                    \right),\;\;W= \frac{\omega}{c}\left(
                      \begin{array}{cc}
                        \varepsilon_1 & 0 \\
                        0 & \mu_1 \\
                      \end{array}
                    \right).
      \end{align*}
      \item Lossy photonic crystals: e.g., $\varepsilon(\gamma):= \operatorname{Re}\varepsilon+i\gamma \operatorname{Im} \varepsilon $, $\mu(\gamma):= \operatorname{Re}\mu+i\gamma \operatorname{Im} \mu $, $|\gamma|\ll 1$ (low-loss), $|\gamma|\gg 1$ (high-loss), $\lambda:=i\gamma$ ($\lambda$ scales amount of loss),
      \begin{align*}
          H=\left(
                              \begin{array}{cc}
                                0 & k_\bot\times \\
                                -k_\bot\times & 0 \\
                              \end{array}
                            \right)-\frac{\omega}{c}\left(
                      \begin{array}{cc}
                        \operatorname{Re}\varepsilon & 0 \\
                        0 & \operatorname{Re}\mu \\
                      \end{array}
                    \right),\;\;W= \frac{\omega}{c}\left(
                      \begin{array}{cc}
                        \operatorname{Im}\varepsilon & 0 \\
                        0 & \operatorname{Im}\mu \\
                      \end{array}
                    \right).
      \end{align*}
  \end{itemize}
  
 In summary, the electrodynamics of 1D photonic crystals in passive linear lossless media provides a model that motivates this paper on the spectral theory of periodic DAEs in the canonical form (\ref{def:intro:DAEsStandardForm}). For as we have shown in this model, Maxwell's equations become the periodic DAEs (\ref{def:MaxwellsPeriodicDAEs}) in this form and so the spectral theory developed in this paper applies to these DAEs (a primary motivation for this paper).
 
 Now before we move on, let us use this example to demonstrate here how our approach would begin when we want to solve DAEs by reducing them to ODEs [which is part of the reason why we need the (local) index-$1$ hypotheses]. As $\det J=0$, let us introduce the unitary matrix $V\in M_6(\mathbb{C})$, i.e.,
\begin{align}
    V\in M_6(\mathbb{C}),\;\;V^*=V^{-1},
\end{align}
defined by
\begin{gather}
    V=\begin{bmatrix}
    1 & 0  & 0 & 0 & 0 & 0\\
    0 & 1  & 0 & 0 & 0 & 0\\
    0 & 0  & 0 & 0 & 1 & 0\\
    0 & 0  & 1 & 0 & 0 & 0\\
    0 & 0  & 0 & 1 & 0 & 0\\
    0 & 0  & 0 & 0 & 0 & 1\\
    \end{bmatrix}.
\end{gather}
Then $V^{-1}JV=[J_{ij}]_{i,j=1,2}$ has the $2\times2$ block partitioned matrix form
\begin{align}
    V^{-1}JV=[J_{ij}]_{i,j=1,2}=\begin{bmatrix}
               J_{11} & J_{12} \\
               J_{21} & J_{22} \\
             \end{bmatrix}=\begin{bmatrix}
               J_{11} & 0 \\
               0 & 0 \\
             \end{bmatrix},\quad \det(J_{11})\neq 0,
\end{align}
and, more precisely,
\begin{align}
    J_{ij}\in M_{n_i\times n_j}(\mathbb{C}), i,j=1,2;\;\;J_{ij}=0,\;\;(i,j)\not = (1,1),\;\;\det(J_{11})\neq 0,
\end{align}
where we define $n_1,n_2\in \mathbb{N}$ by
\begin{align}
    n_1:=\operatorname{rank} J=4,\;\;n_2:=\dim \ker J=\operatorname{nullity}(J)=6-n_1=2.
\end{align}
In particular, $J_{11}\in M_{4}(\mathbb{C})$ is given by
\begin{gather}
    J_{11}=i^{-1}\begin{bmatrix}
    0 & 0  & 0 & 1 \\
    0 & 0  & -1 & 0 \\
    0 & -1  & 0 & 0 \\
    1 & 0  & 0 & 0 \\
    \end{bmatrix}, \;i=\sqrt{-1}.
\end{gather}
The relevance of this matrix $V$ is as follows.  Consider the components of the electromagnetic fields
\begin{align}
    E=\begin{bmatrix}E_1&E_2&E_3\end{bmatrix}^T,\; H=\begin{bmatrix}H_1&H_2&H_3\end{bmatrix}^T.
\end{align}
Then in terms of the tangential components (i.e., those with index $i=1,2$) and the normal components (i.e., those index by $i=3$) we have
\begin{align}
 V^{-1}\begin{bmatrix}
    E&
    H
    \end{bmatrix}^T=\begin{bmatrix}
    E_1&
    E_2&
    H_1&
    H_2&
    E_3&
    H_3
    \end{bmatrix}^T.
\end{align}
Hence, for the time-harmonic fields $E,H$ of the form (\ref{DefTimeHarmonicEMFieldsZDir}) and in terms of the corresponding $6\times 1$ column vector-valued function $f(z)=\begin{bmatrix}E(z)&H(z)\end{bmatrix}^T$ (written as in a $2\times 1$ block vector form), the vector function $V^{-1}f$ has the following $2\times 1$ block vector form: 
\begin{align}
    V^{-1}f=\begin{bmatrix}
    f_1\\
    f_2
    \end{bmatrix},\;f(z)=\begin{bmatrix}E(z)\\ H(z)\end{bmatrix},\;f_1(z)=\begin{bmatrix}E_1(z)\\E_2(z)\\ H_1(z)\\H_2(z)\end{bmatrix},\;f_2(z)=\begin{bmatrix}E_3(z)\\H_3(z)\end{bmatrix}.
\end{align}
The importance of introducing $V$ and $J_{11}$ is then the following: The tangential components $f_1(z)=\begin{bmatrix}
    E_1(z)&
    E_2(z)&
    H_1(z)&
    H_2(z)
    \end{bmatrix}^T$
give rise to the differential part of the periodic Maxwell's DAEs (\ref{def:MaxwellsPeriodicDAEs}) (see also Example \ref{ex:1DPCMaxwellsDAEOp}), that is, they are the solutions of a linear system of canonical ODEs with periodic coefficients in which $J_{11}$ is now the matrix coefficient for the derivative term, whereas the normal components $f_2(z)=\begin{bmatrix}
    E_3(z)&
    H_3(z)
    \end{bmatrix}^T$
give rise to the algebraic part of (\ref{def:MaxwellsPeriodicDAEs}), that is, they are the output of left multiplication by a matrix function of $z$ acting on $f_1(z)$. This statement can be made rigorous using Lemma \ref{lem:AltCharDAEOpAndItsDomain} and Proposition \ref{PropKeySolvabilityLocalIndex1DAEs} [by taking the interval $I=\mathbb{R}$ and the coefficients $J,H,W$ to be (\ref{def:MaxwellsPeriodicDAEsJ}), (\ref{def:MaxwellsPeriodicDAEsH}), (\ref{def:MaxwellsPeriodicDAEsW})], two key results in this paper on solvability of linear DAEs.

We conclude this section with a comment and then two remarks. One thing to keep in mind here is that the example above demonstrates the complexities that arise when considering linear DAEs versus linear ODEs, but also shows that there is a need to develop the operator and spectral theory for them. But we do have simpler examples in this paper for which we apply our theory, see Examples \ref{ex:MaxOpSelfAdjWithEigenvalueInftMult} and \ref{ex:GenerizationOfExMaxOpSelfAdjWithEigenvalueInftMult}.

\begin{remark}
The methods we use in this paper to reduced linear DAEs to linear ODEs is implicit in a standard and well known approach in studying the electrodynamics of layered media \cite{72DW, 99IA, 00IA, FV, 09NT, 09RL, 13GBa, 13GBb, 17PP} (as opposed to other standard methods such as those in \cite{77PYa, 77PYb, 79PY, 05PY}) and has been used effectively by the second author of this paper, for instance, in \cite{AW, 13SW, 16SW, 16CW}. But in this context, our approach that we've developed in the sections below, using DA operators and spectral theory, is completely new and deserves to be further studied from this perspective. Our paper represents a first step in this direction of research.
\end{remark}

\begin{remark}
We are motivated to study the spectral theory of linear DAEs because we are interested specifically in electromagnetic applications involving 1D photonic crystals. More broadly, it is known that linear DAEs arise in computational electromagnetics \cite{19GS} and in circuit theory \cite{08RR}, and as such it would be interesting and worth investigating how our methods and results would be useful in these contexts too.
\end{remark}

\section{\label{sec:Preliminaries}Preliminaries}

This paper lies at the intersection of the several different areas, namely, ODEs and DAEs, spectral theory for unbounded operators, and electromagnetics (as discussed in the previous section). As such, we expect that the reader's level of expertise on these topics may widely vary. Thus, we try to give here some necessary background to make our paper more approachable and self-contained. First, we will introduce the standard function spaces that are needed for describing existence and uniqueness theorems on initial-value problems (IVPs) for linear ODEs/DAEs. After this we recall, for unbounded operators on Hilbert spaces, the following classes of operators: densely defined, closable, closed, symmetric, and self-adjoint.

\subsection{Function spaces}
Let $\mathbb{N}, \mathbb{R},\mathbb{C}$ denote the natural, real, and complex numbers, respectively. For $-\infty \leq a<b\leq \infty,$ an open, closed, or half-open interval $I\subseteq \mathbb{R}$ is denoted by $(a,b),$ $[a,b],$ and $[a,b),(a,b]$, respectively; if $a=-\infty$ or $b=\infty$, then the interval is unbounded, otherwise it is bounded (e.g., compact intervals are precisely those intervals that are closed and bounded) and, as usual, $[a,\infty]=[a,\infty), (a,\infty]=(a,\infty), [-\infty, b]=(-\infty, b], [-\infty,b)=(-\infty,b)$. 

For any $z\in \mathbb{C}$, we denote its complex conjugate and (Hilbert space) norm by $\overline{z}$ and $|z|=(\overline{z}z)^{1/2},$ respectively. 

For any interval $I$, the complex vector space of Lebesgue measurable functions (with equality in the sense of equal a.e.\ on $I$) is denoted by $\mathcal{M}(I)$ [if $I=(a,b)$ then instead of writing $(I)=((a,b))$ we drop the extra parentheses, e.g., $\mathcal{M}(I)=\mathcal{M}(a,b)$]; for each $p\in [1,\infty)$, the subspace $L^p(I)$ of $\mathcal{M}(I)$ is defined by
\begin{align}
    L^p(I)=\left\{f\in \mathcal{M}(I):\int_{I}|f(t)|^p dt<\infty\right\},
\end{align}
which is a Banach space with norm
\begin{align}
    ||f||_p=\left(\int_{I}|f(t)|^p dt \right)^{1/p},\;\;f\in  L^p(I)
\end{align}
and, in the case $p=2$, is a Hilbert space with inner product
\begin{align}
    \langle f,g \rangle_2=\int_{I}\overline{f(t)}g(t) dt,\;\;f,g\in L^2(I);
\end{align}
the subspace $L^{\infty}(I)$ of $\mathcal{M}(I)$ is defined in terms of the essential supremum by
\begin{align}
    L^{\infty}(I)=\left\{f\in \mathcal{M}(I):\esssup_{t\in I}|f(t)|<\infty\right\},
\end{align}
which is a Banach space with norm
\begin{align}
    ||f||_{\infty}=\esssup_{t\in I}|f(t)|,\;\;f\in  L^{\infty}(I).
\end{align}
For any compact interval $I=[a,b]$, we denote the Banach space of all complex-valued absolutely continuous functions on the interval $I$ by $AC(I)$ with norm
\begin{align}
    ||f||_{AC(I)} = |f(a)|+\int_{I}\left|\frac{df}{dt}(\tau)\right| d\tau,\;\;f\in AC(I).
\end{align}
For any interval $I$, the subspace $L_{loc}^p(I)$ of $\mathcal{M}(I)$ and the complex vector space $AC_{loc}(I)$ are defined by
\begin{align}
    L_{loc}^p(I)=\{f\in \mathcal{M}(I):f\in L^p([a,b]),\text{ for every compact interval }[a,b]\subseteq I\},\\
     AC_{loc}(I)=\{f:I\rightarrow \mathbb{C}\;|\;f\in AC([a,b]),\text{ for every compact interval }[a,b]\subseteq I\},
\end{align}
respectively. In particular,
\begin{align}
    L_{loc}^p(I)=L^p(I), AC_{loc}(I)=AC(I),\text{ if } I \text{ is a compact interval.}
\end{align}
The subspace $W^{1,p}(I)$ of $L^p(I)$ is defined by 
\begin{align}
    W^{1,p}(I)=\left\{f\in L^p(I):f\in AC_{loc}(I), \frac{df}{dt}\in L^p(I)\right\},
\end{align}
which is a Banach space with norm
\begin{align}
    ||f||_{1,p}=||f||_p+\left\Vert\frac{df}{dt}\right\Vert_p,\;\;f\in  W^{1,p}(I).
\end{align}
The subspace $W^{1,p}_{loc}(I)$ of $L^p_{loc}(I)$ is defined by 
\begin{align}
    W^{1,p}_{loc}(I)=\{f\in\mathcal{M}(I):f\in W^{1,p}([a,b]),\text{ for every compact interval }[a,b]\subseteq I\},
\end{align}
in particular,
\begin{align}
    W^{1,p}_{loc}(I)= W^{1,p}(I),\text{ if } I \text{ is a compact interval.}
\end{align}
Note that for each $f\in W_{loc}^{1,p}(I)$, there is a unique $g\in AC_{loc}(I)$ such that $f(t)=g(t)$ for a.e.\ $t\in I$, and as such, we will always use this representative of $f$ when we evaluate $f$ at a point, i.e., for each $t_0\in I$ will define $f(t_0):=g(t_0)$. With this convention we have the integral representation
\begin{align}
    f(t)=f(t_0)+\int_{t_0}^t\frac{df}{dt}(\tau)d\tau,\;\; t_0,t\in I,\label{IntegralRepresW1pFunctions}
\end{align}
and as such $f$ is a continuous function on the interval $I$. Similarly, if $f\in W^{1,p}(I)$ then $f$ has a representation in $AC_{loc}(I)$ which has the integral representation (\ref{IntegralRepresW1pFunctions}) and is continuous on $I$. Using this identification of such functions with their integral representations, we can abuse notation and make the identification
\begin{gather}
    W^{1,1}(I)\subseteq W^{1,1}_{loc}(I)=AC_{loc}(I),\text{ for any interval $I\subseteq \mathbb{R}$;}\\
     W^{1,1}(I)=W^{1,1}_{loc}(I)=AC_{loc}(I)=AC(I),\text{ if $I$ is a compact interval.}
\end{gather}

Let $m,n\in \mathbb{N}$, $I$ an interval, $p\in [1,\infty]$, and
\begin{align}
    \mathcal{V}\in\{\mathbb{C},\;\mathcal{M}(I),\; L_{loc}^p(I),\; W^{1,p}_{loc}(I),\; AC_{loc}(I)\}.
\end{align} 
We denote the set of all $n\times m$ matrices with entries in $\mathcal{V}$ by $M_{n,m}(\mathcal{V})$, and define
\begin{align}
    \mathcal{V}^n=M_{n,1}(\mathcal{V}),\;\;M_{n}(\mathcal{V})=M_{n,n}(\mathcal{V})
\end{align}
and identify $\mathcal{V}$ with $\mathcal{V}^1.$ If $\mathcal{V}$ is a Banach space (Hilbert space) with norm $||\cdot||_{\mathcal{V}}$ then $M_{n,m}(\mathcal{V})$ will denote the Banach space (Hilbert space) with norm
\begin{align}
    ||[a_{ij}]||=\left(\sum_{i=1}^n\sum_{j=1}^m||a_{ij}||_{\mathcal{V}}^2\right)^{1/2},\;\;[a_{ij}]\in M_{n,m}(\mathcal{V}).
\end{align}
Any matrix-valued function $A:I\rightarrow M_{n,m}(\mathbb{C})$ can be written uniquely as $A=[a_{ij}]$, where $a_{ij}:I\rightarrow \mathbb{C}$ and $[a_{ij}](t)=[a_{ij}(t)]$ for all $t\in I$ and all $i=1,\ldots, n, j=1, \ldots, m$; the complex conjugation $\overline{A}:I\rightarrow M_{n,m}(\mathbb{C})$, transpose $A^T:I\rightarrow M_{m,n}(\mathbb{C})$, and conjugate transpose $A^*:I\rightarrow M_{m,n}(\mathbb{C})$ are defined by
\begin{align}
    \overline{A}(t)=\overline{A(t)}=[\overline{a_{ij}(t)}],\;\;A^T(t)=A(t)^T=[a_{ji}(t)],\;\;A^*(t)=A(t)^*=[\overline{a_{ji}(t)}],\;\;\forall t\in I.\label{DefFuncConjTransp}
\end{align}
Similarly, the complex conjugation, transpose, and conjugate transpose of any  $[a_{ij}]\in M_{n,m}(\mathcal{V})$ satisfies
\begin{align}
    \overline{[a_{ij}]}=[\overline{a_{ij}}]\in M_{n,m}(\mathcal{V}),\;\;[a_{ij}]^T=[a_{ji}],\;\;[a_{ij}]^*=[\overline{a_{ji}}]\in M_{m,n}(\mathcal{V}).\label{DefFuncSpaceConjTransp}
\end{align}
In particular, for the Hilbert spaces $\mathbb{C}^n$ and $(L^2(I))^n$, their inner products $\langle \cdot, \cdot \rangle$ and $\langle \cdot,\cdot\rangle_2$, respectively, are defined as
\begin{gather}
    \langle x, y \rangle=x^*y,\;\;x,y\in \mathbb{C}^n,\\
    \langle f,g\rangle_2=\int_I\langle f(t),g(t)\rangle dt,\;\;f,g\in (L^2(I))^n.
\end{gather}
If $\mathcal{V}$ is a functional space in which integration $\int_{U}(\cdot)\; dt$ over an interval $U\subseteq I$ is well-defined then we define the integration $\int_{U}(\cdot)\; dt$ on $M_{n,m}(\mathcal{V})$ by
\begin{align}
    \int_{U}[a_{ij}](t)dt=\left[\int_{U}a_{ij}(t)dt\right],\;\;[a_{ij}]\in M_{n,m}(\mathcal{V}).
\end{align}
Similarly, if $\mathcal{V}$ is a functional space in which differentiation $\frac{d}{dt}$ is defined (either in classical or weak sense) then we define differentiation $\frac{d}{dt}$ on $M_{n,m}(\mathcal{V})$ by
\begin{align}
    \frac{d[a_{ij}]}{dt}=\left[\frac{da_{ij}}{dt}\right],\;\;\;\;[a_{ij}]\in M_{n,m}(\mathcal{V}).
\end{align}

The following well-known theorem \cite{12Ze} will be crucial in our paper.

\begin{theorem}[ODE IVP -- Existence and uniqueness of solutions]\label{ThmODEIVPExistenceUniquenessSolns}
  Let $I$ be any interval and $m,n\in \mathbb{N}$. If 
  \begin{align}
      A\in M_n(L^1_{loc}(I)),\;\;F\in M_{n,m}(L^1_{loc}(I))\label{ThmODEIVPExistenceUniquenessSolnsHypL1Loc}
  \end{align}
  then every initial-value problem (IVP)
  \begin{gather}
      \frac{dX}{dt}+AX=F,\label{ThmODEIVPExistenceUniquenessSolnsDefIVP1}\\
      X(t_0)=C,\;\;t_0\in I,\;\;C\in M_{n,m}(\mathbb{C})\label{ThmODEIVPExistenceUniquenessSolnsDefIVP2}
  \end{gather}
  on $I$, has a unique solution 
  \begin{align}
      X\in M_{n,m}(W^{1,1}_{loc}(I)).\label{ThmODEIVPExistenceUniquenessSolnsDefIVP3}
  \end{align}
  Similarly, in the case in which the interval $I$ is bounded, the statement is true if the ``loc" is dropped in the hypotheses (\ref{ThmODEIVPExistenceUniquenessSolnsHypL1Loc}) and in the conclusion (\ref{ThmODEIVPExistenceUniquenessSolnsDefIVP3}). 
\end{theorem}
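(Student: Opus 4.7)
The plan is to proceed along the classical lines of the Picard--Lindel\"of theorem, adapted to Carath\'eodory data (i.e., $L^1_{loc}$ coefficients). First I would reduce to the vector case $m=1$: since each column of $X$ satisfies an independent IVP with the corresponding column of $F$ and $C$, existence and uniqueness for the matrix IVP follow column-by-column from the vector case. Next I would reduce to the case that $I$ is compact: assuming the statement is known on compact intervals, for each compact $[a,b]\subseteq I$ containing $t_0$ one obtains a unique solution $X_{[a,b]}\in (W^{1,1}([a,b]))^n$, and by uniqueness these agree on overlaps, so exhausting $I$ by a nested sequence of compact subintervals containing $t_0$ yields a unique $X\in (W^{1,1}_{loc}(I))^n$.

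The heart of the argument is then on a compact interval $I=[a,b]$ with $A\in M_n(L^1(I))$, $F\in (L^1(I))^n$, $t_0\in I$, and $C\in\mathbb{C}^n$. I would reformulate the IVP as the Volterra integral equation
\begin{equation*}
X(t)=C-\int_{t_0}^t A(s)X(s)\,ds+\int_{t_0}^t F(s)\,ds,\qquad t\in I,
\end{equation*}
noting that a continuous $X$ solves this integral equation if and only if it lies in $(W^{1,1}(I))^n$ and satisfies the IVP. Define $T\colon (C(I))^n\to (C(I))^n$ by letting $TX$ be the right-hand side above; this is well defined because integrals of $L^1$ functions are absolutely continuous, hence continuous. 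By induction using Fubini's theorem one obtains the Carath\'eodory estimate
\begin{equation*}
\|T^k X - T^k Y\|_{\infty}\le \frac{1}{k!}\left(\int_a^b \|A(s)\|\,ds\right)^{k}\|X-Y\|_{\infty}
\end{equation*}
for all $k\in\mathbb{N}$, so for $k$ sufficiently large $T^k$ is a strict contraction on the Banach space $(C(I))^n$ with the sup-norm. The Banach fixed-point theorem (applied via the standard fact that a map whose iterate is a contraction has a unique fixed point) then yields a unique $X\in (C(I))^n$ with $TX=X$.

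Uniqueness globally follows because any two $W^{1,1}_{loc}$ solutions of the IVP produce a difference satisfying the homogeneous integral equation with $C=0$, to which the same iterated estimate applies and forces the difference to vanish (equivalently, a Gronwall inequality for $L^1$ kernels). The asserted regularity then drops out automatically: $X$ equals a constant plus the integral of the $L^1$ function $-AX+F$, hence $X\in (AC(I))^n$ with $\frac{dX}{dt}=F-AX\in (L^1(I))^n$, i.e., $X\in (W^{1,1}(I))^n$, and analogously in $(W^{1,1}_{loc}(I))^n$ on general $I$. The main technical obstacle is the Fubini-based inductive estimate on $T^k$, which is the Carath\'eodory generalization of the usual Picard bound and the one place where the merely $L^1$-integrable character of $A$ (rather than continuity) has to be handled carefully; once that estimate is in hand, the rest reduces to standard facts about absolutely continuous functions and the contraction mapping principle.
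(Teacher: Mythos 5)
Your argument is correct, but note that the paper does not prove this theorem at all: it is stated as a well-known result and delegated to the cited literature (\cite{12Ze}), so there is no internal proof to compare against. Your proposal supplies a self-contained proof along the classical Carath\'eodory lines --- column-wise reduction to $m=1$, reduction to compact subintervals with patching by uniqueness, reformulation as a Volterra integral equation on $(C(I))^n$, the iterated Picard bound $\|T^kX-T^kY\|_\infty\le \frac{1}{k!}\bigl(\int_a^b\|A(s)\|\,ds\bigr)^k\|X-Y\|_\infty$, and the fixed-point theorem for a map with a contractive iterate --- and each of these steps is sound; the regularity bookkeeping (fixed point lies in $(W^{1,1})^n$ because it is a constant plus an integral of an $L^1$ function) is exactly right. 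Two small points you should make explicit if you write this up in full: first, the final clause of the statement concerns bounded intervals that need not be closed, so to ``drop the loc'' you should observe that $A,F\in L^1(I)$ extend to the compact closure $\overline{I}$ (a.e.\ equivalence classes are unaffected by the endpoints), solve there, and restrict, which yields $X\in M_{n,m}(W^{1,1}(I))$ rather than merely $W^{1,1}_{loc}(I)$; second, in the inductive estimate the quantity $\|A(s)\|$ must be a matrix norm compatible with the vector norm used in $\|X-Y\|_\infty$, and when $t<t_0$ the integrals should be written with $\bigl|\int_{t_0}^t\|A(s)\|\,ds\bigr|$ so the induction runs on both sides of $t_0$. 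Neither issue is a gap in the idea, only in the level of detail.
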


\subsection{Unbounded operators on Hilbert spaces}
Here we recall from \cite{80RS} some of the basic notions, definitions, and results for unbounded operators on densely defined, closed and closable operators, and adjoints in Hilbert spaces that will be used in this paper. All proofs of the statements below can be found in \cite{80RS} and so are omitted with the exception of Lemma \ref{LemLinearOpsExtEqualityKerRanConds} and Theorem \ref{thm:prem:FundResultAdjointsKerRanDecomp} (which are crucial to our paper, interesting and useful in their own right, and deserve simple proofs).

\begin{definition}
A (linear) operator $T$ on a Hilbert space $\mathcal{H}$ is a linear map from its domain, a linear subspace of $\mathcal{H}$, into $\mathcal{H}$. This subspace, which we denote by $D(T)$, is called the domain of the operator $T$. Moreover, $T$ is said to be densely defined if $D(T)$ is dense in $\mathcal{H}$ [i.e., $\overline{D(T)}=\mathcal{H}$].
\end{definition}

\begin{definition}
 The graph $\Gamma(T)$ of an operator $ T:D(T)\rightarrow \mathcal{H}$  is the set
$$\Gamma(T)=\left\{( \varphi,T\varphi): \varphi\in D(T)\right\}\subseteq \mathcal{H}\times \mathcal{H},$$ and $T$ is said to be a closed operator  if  $\Gamma(T)$  is a closed set [i.e., $\overline{\Gamma(T)}=\Gamma(T)$] in the Hilbert space  $\mathcal{H}\times \mathcal{H}$ with inner product
$$\langle (\varphi_1,\psi_1),(\varphi_2,\psi_2)\rangle=\langle\varphi_1,\psi_1\rangle+\langle\varphi_2,\psi_2\rangle, \quad  \varphi_i,\psi_i\in \mathcal{H}  ,i=1,2.$$
\end{definition}

\begin{definition}
Let $T$ and $T_1$ be operators on a Hilbert space $\mathcal{H}$. If $\Gamma(T)\subseteq \Gamma(T_1)$, then $T_1$ is said to be extension of $T$ and we write $T\subset T_1$. Equivalently, $T\subset T_1$ if and only if $D(T)\subseteq D(T_1)$ and $T_1\varphi=T\varphi $ for all $\varphi\in D(T).$
\end{definition}
\begin{definition}
   An operator  $T$ is closable if it has a closed extension. Moreover, every closable operator  $T$ has smallest closed extension  called its closure, which is denoted by $\overline{T}$.
   \end{definition}

  \begin{definition}
   Let $T$ be a densely defined linear operator on a Hilbert space $\mathcal{H}$. Let $D(T^*)$ be the set of $\varphi\in \mathcal{H}$ for which there is an $\eta\in\mathcal{H}$ with
   $$\langle T\psi,\varphi\rangle=\langle \psi,\eta\rangle\text{ for all } \psi\in D(T).$$
   For each such $\varphi\in D(T^*)$, we define $T^*\varphi=\eta$. This linear operator $T^*:D(T^*)\rightarrow \mathcal{H}$ is called the adjoint of $T$. 
 \end{definition}
 It is useful to note that, unlike the case of bounded operators, the domain of $T^*$ may not be dense [for instance, it is possible to have $D(T^*)=\{0\}$]. But, if $T^*$ is densely defined, then $T^*$ also has an adjoint $(T^*)^*$ which we abbreviate by $T^{**}$, i.e, $T^{**}=(T^*)^*$. Also, in terms of extensions and adjoints, if $S,T$ are densely defined then $S\subset T$ implies $T^*\subset S^*$. The next theorem (see \cite[Theorem VIII.1]{80RS}) gives a simple and useful relationship between the notions of adjoint and closure.

   \begin{theorem}
     Let $T$ be a densely defined operator on a Hilbert space $\mathcal{H}$. Then:
     \begin{itemize}
       \item[(a)] $T^*$ is closed.
       \item[(b)] $T$ is closable if and only if $D(T^*)$ is dense in which case $\overline{T}=T^{**}$.
       \item[(c)] If $T$ is closable, then $(\overline{T})^*=T^*$.
     \end{itemize}
   \end{theorem}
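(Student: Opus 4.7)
The plan is to reformulate everything geometrically in terms of graphs inside the Hilbert space $\mathcal{H}\oplus\mathcal{H}$. I introduce the unitary operator $V:\mathcal{H}\oplus\mathcal{H}\to\mathcal{H}\oplus\mathcal{H}$ defined by $V(\varphi,\psi)=(-\psi,\varphi)$, which satisfies $V^*=V^{-1}$ and $V^2=-I$. Unpacking the definition of the adjoint, the condition $\langle T\psi,\varphi\rangle=\langle\psi,\eta\rangle$ for all $\psi\in D(T)$ is exactly orthogonality of $(\varphi,\eta)$ to every vector $(-T\psi,\psi)=V(\psi,T\psi)$, and hence
\[
\Gamma(T^*)=\bigl(V(\Gamma(T))\bigr)^{\perp}.
\]
This single identity will drive the entire proof. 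Part (a) is then immediate: orthogonal complements are closed subspaces of $\mathcal{H}\oplus\mathcal{H}$, so $\Gamma(T^*)$ is closed and therefore $T^*$ is a closed operator.

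For part (b), I would use that $V$ is unitary (so $V(A^{\perp})=V(A)^{\perp}$ for any subspace $A$) together with $V^2=-I$ to compute
\[
V(\Gamma(T^*))^{\perp}=V\bigl(V(\Gamma(T))^{\perp}\bigr)^{\perp}=V^2(\Gamma(T))^{\perp\perp}=\overline{\Gamma(T)},
\]
where the last step uses that a linear subspace is invariant under scalar multiplication by $-1$. A direct calculation then shows that a vector of the form $(0,\eta)$ lies in $V(\Gamma(T^*))^{\perp}$ precisely when $\eta\perp D(T^*)$. Recall that $T$ is closable iff $\overline{\Gamma(T)}$ is itself the graph of some operator, and that a closed linear subspace of $\mathcal{H}\oplus\mathcal{H}$ is an operator graph iff it contains no vector of the form $(0,\eta)$ with $\eta\neq 0$. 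Combining these, $T$ is closable iff $D(T^*)^{\perp}=\{0\}$, i.e., iff $D(T^*)$ is dense; and when this holds, $T^{**}$ is defined and the displayed identity says $\Gamma(T^{**})=\overline{\Gamma(T)}=\Gamma(\overline{T})$, so $\overline{T}=T^{**}$.

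For part (c), assuming $T$ is closable, the inclusion $T\subset\overline{T}$ immediately gives $(\overline{T})^*\subset T^*$. For the reverse inclusion, given $\varphi\in D(T^*)$ and any $\psi\in D(\overline{T})$, I pick a sequence $\psi_n\in D(T)$ with $(\psi_n,T\psi_n)\to(\psi,\overline{T}\psi)$; then
\[
\langle\overline{T}\psi,\varphi\rangle=\lim_{n\to\infty}\langle T\psi_n,\varphi\rangle=\lim_{n\to\infty}\langle\psi_n,T^*\varphi\rangle=\langle\psi,T^*\varphi\rangle,
\]
showing $\varphi\in D((\overline{T})^*)$ with $(\overline{T})^*\varphi=T^*\varphi$.

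The step requiring the most care, and the main conceptual obstacle, is the equivalence in (b) between \emph{$\overline{\Gamma(T)}$ fails to be a graph} and \emph{$D(T^*)^{\perp}\neq\{0\}$}. Once the two orthogonality computations above are carried out cleanly, and one verifies the elementary fact that a closed linear subspace of $\mathcal{H}\oplus\mathcal{H}$ is an operator graph precisely when it contains no nonzero vector of the form $(0,\eta)$, parts (a), (b), and (c) all drop out of essentially one unitary manipulation of graphs.
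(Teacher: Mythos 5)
Your argument is correct, and it is precisely the standard graph argument via the unitary $V(\varphi,\psi)=(-\psi,\varphi)$ and the identity $\Gamma(T^*)=\bigl(V(\Gamma(T))\bigr)^{\perp}$ found in the reference \cite{80RS} (Theorem VIII.1), which is exactly what the paper cites in lieu of giving its own proof. No gaps: the orthogonality computations, the characterization of graphs among closed subspaces, and the approximation argument for part (c) are all carried out correctly.
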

\begin{definition}
     A densely defined operator $T$ on a Hilbert space is called symmetric if $T\subset T^*$, that is, if $D(T)\subseteq D(T^*)$ and $T\varphi=T^*\varphi$ for all $\varphi\in D(T)$. Equivalently, $T$ is symmetric if and only if 
     \begin{align}
         \langle T\varphi,\psi\rangle=\langle \varphi, T\psi\rangle, \text{ for all } \varphi,\psi\in D(T).\label{def:FormallySelfAdj}
     \end{align}
     An operator
     $T$ is called self-adjoint if $T=T^*$, that is, if and only if $T$ is symmetric and $D(T)=D(T^*)$.
   \end{definition}

\begin{remark}
We want to clear up some confusion that can occur for unbounded operators between the notions of Hermitian, symmetric, and self-adjoint operators (see \cite[p.\ 414]{02PL} for an amusing historical anecdote on this). A symmetric operator is, by our definition (from \cite{80RS}), densely defined. In \cite{80RS}, no distinction is made between symmetric and Hermitian operators (see Sec.\ VIII.2, p.\ 255 in \cite{80RS}). However, in \cite{80JW}, a distinction is made between the two (see Sec.\ 4.4, p.\ 72 in \cite{80JW}). Specifically, in \cite{80JW}, a linear operator $T$ satisfying (\ref{def:FormallySelfAdj}), but not necessarily densely defined, is called Hermitian (or formally self-adjoint, in accordance with the definition of formal adjoint in \cite{80JW}, see Sec.\ 4.4, p.\ 67) and if, in addition, it is densely defined then it is called symmetric, and if, in addition, it equals its adjoint then it is called self-adjoint. For these definitions from \cite{80JW}, these classes are different, but only for unbounded operators (i.e., examples exist of unbounded operators that are Hermitian but not symmetric, and ones that are symmetric but not self-adjoint).
\end{remark}

We will need the following notation in this paper. For any linear operator $T:V\rightarrow W$ between vector spaces $V$ and $W$ (over a common field $F$), we denote its kernel (i.e., nullspace) and range by $\ker(T)$ and $\operatorname{ran}(T)$, respectively. Similarly, if $A$ is a matrix with entries in a field then we denote its kernel and range by $\ker(A)$ and $\operatorname{ran}(A)$ [more precisely, if $T_A$ denotes the operator of left-multiplication by $A$ then $\ker(A)=\ker(T_A), \operatorname{ran}(A)=\operatorname{ran}(T_A)$].

The next theorem from \cite{19ST} will be useful when we discuss the adjoints of the minimal and maximal operators associated with linear DAEs. As the proof in \cite{19ST} uses additional results not presented above and as this theorem is so fundamental to our paper (as well as being interesting and useful more generally), we give an alternative elementary proof of it here. To do so, we first need the next lemma.
\begin{lemma}\label{LemLinearOpsExtEqualityKerRanConds}
Let $X,Y$ be vector spaces, $D(S)$ and $D(T)$ subspaces of $X$, and $S:D(S)\rightarrow Y$, $T: D(T)\rightarrow Y$ be linear operators satisfying $D(S)\subseteq D(T)$ and $Sx=Tx$ for every $x\in D(S)$. Then $S=T$ if and only if $\ker S=\ker T$ and $\operatorname{ran}S=\operatorname{ran}T$.
\end{lemma}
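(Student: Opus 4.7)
The forward direction is immediate: if $S=T$, then $D(S)=D(T)$ and the operators coincide, so their kernels and ranges trivially agree. All the work is in the converse.

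For the converse, suppose $\ker S = \ker T$ and $\operatorname{ran} S = \operatorname{ran} T$. Since we are already given $D(S)\subseteq D(T)$ with $S$ and $T$ agreeing on $D(S)$, it suffices to prove the reverse inclusion $D(T)\subseteq D(S)$. The plan is to show that any $x\in D(T)$ can be split as $x = y + z$ with $y\in D(S)$ and $z\in \ker T \subseteq D(S)$, which will place $x$ in $D(S)$.

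To carry this out, fix $x\in D(T)$ and look at $Tx$. By hypothesis $\operatorname{ran} T = \operatorname{ran} S$, so there exists $y\in D(S)$ with $Sy = Tx$. Because $y\in D(S)\subseteq D(T)$ and $T$ extends $S$, we also have $Ty = Sy = Tx$, hence $T(x-y)=0$, i.e.\ $x-y\in \ker T$. Invoking $\ker T=\ker S$ (which in particular forces $\ker T\subseteq D(S)$), we get $x-y\in D(S)$, and therefore $x = y + (x-y)\in D(S)$. This proves $D(T)\subseteq D(S)$, so $D(S)=D(T)$ and $S=T$.

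There is no real obstacle here; the only thing to be careful about is not to confuse $\ker S=\ker T$ as a set equality of subspaces of $X$ (so that $\ker T$ automatically lies in $D(S)$) versus merely an equality of the zero sets of the two restricted maps. The argument uses the full set-theoretic equality in the step $x-y\in \ker T=\ker S\subseteq D(S)$.
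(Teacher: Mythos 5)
Your proposal is correct and follows essentially the same argument as the paper: use $\operatorname{ran}S=\operatorname{ran}T$ to find $y\in D(S)$ with $Sy=Tx$, deduce $x-y\in\ker T=\ker S\subseteq D(S)$, and conclude $x\in D(S)$ so that $D(T)\subseteq D(S)$. No gaps.
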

\begin{proof}
Assume the hypotheses. If $S=T$ then obviously $\ker S=\ker T$ and $\operatorname{ran}S=\operatorname{ran}T$. Conversely, suppose $\ker S=\ker T$ and $\operatorname{ran}S=\operatorname{ran}T$. To prove $S=T$ it suffices to prove that $D(T)\subseteq D(S)$. Let $x\in D(T).$ Then, as $\operatorname{ran}S=\operatorname{ran}T$, there exists $u\in D(S)$ such that $Tx=Su$ and hence by hypotheses, $Tx=Su=Tu$. Thus, by linearity it follows that $x-u\in \ker T$ and since $\ker T=\ker S\subseteq D(S)$ and $D(S)$ a subspace of $X$, then $x=x-u+u\in D(S)$. This proves $D(T)\subseteq D(S)$ which completes proof.
\end{proof}

In the following theorem, we will consider the more general notion of linear operators $A,B$ between two Hilbert spaces $H,K$ that are densely defined closed operators which are adjoints of each other. We have already defined these notions above in the case $H=K$, and definitions are similar in the case $H\not=K$ so their definitions are omitted (but can be found in \cite{80JW}, for instance).  
\begin{theorem}\label{thm:prem:FundResultAdjointsKerRanDecomp}
  If $H,K$ are Hilbert spaces with inner products $\langle \cdot, \cdot \rangle_H$, $\langle \cdot, \cdot \rangle_K$, respectively, $D(A)$ and $D(B)$ are subspaces of $H$ and $K$, respectively, and $A:D(A)\rightarrow K$, $B:D(B)\rightarrow H$ are linear operators satisfying
  \begin{gather}
      \langle Ax, y \rangle_K=\langle x, By\rangle_H,\;\;\text{for all }x\in D(A), y\in D(B),\label{AdjointABRelation}\\
      \ker A +\operatorname{ran}B=H\label{AdjointABKerRanCond},\\
      \ker B +\operatorname{ran}A=K,\label{AdjointBAKerRanCond}
  \end{gather}
  then $A$ and $B$ are densely defined closed operators with closed ranges and are adjoints of each other, i.e.,
  \begin{align}
      A^*=B,\;\;B^*=A.
  \end{align}
\end{theorem}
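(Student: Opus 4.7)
The strategy is to observe that the formal adjoint relation (\ref{AdjointABRelation}) forces the sums in (\ref{AdjointABKerRanCond})--(\ref{AdjointBAKerRanCond}) to be \emph{orthogonal} direct sums, from which closedness of ranges, density of domains, and the adjoint identities all follow by short arguments, with Lemma \ref{LemLinearOpsExtEqualityKerRanConds} handling the final identification of $A^*$ with $B$.

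First I would establish that $\ker A \perp \operatorname{ran} B$ in $H$ and $\ker B \perp \operatorname{ran} A$ in $K$: for $u \in \ker A \subseteq D(A)$ and $w \in D(B)$, relation (\ref{AdjointABRelation}) gives $\langle u, Bw\rangle_H = \langle Au, w\rangle_K = 0$, and the other identity is symmetric. Combining this orthogonality with (\ref{AdjointABKerRanCond})--(\ref{AdjointBAKerRanCond}) yields the orthogonal direct sum decompositions
\begin{equation*}
H = \ker A \oplus \operatorname{ran} B, \qquad K = \ker B \oplus \operatorname{ran} A.
\end{equation*}
A standard Hilbert-space fact (if $V_1\perp V_2$ and $V_1+V_2$ fills the ambient Hilbert space, each summand is closed) then shows that $\ker A$, $\operatorname{ran} B$, $\ker B$, $\operatorname{ran} A$ are all closed; in particular $A$ and $B$ have closed ranges and one has the orthogonal complement identities $(\ker A)^\perp=\operatorname{ran} B$, $(\operatorname{ran} A)^\perp=\ker B$, plus their symmetric counterparts.

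Next I would deduce density of $D(A)$ (density of $D(B)$ being symmetric). Given $z\in D(A)^\perp$, the inclusion $\ker A\subseteq D(A)$ places $z$ in $(\ker A)^\perp=\operatorname{ran} B$, so write $z=Bw$ for some $w\in D(B)$. Then for every $x\in D(A)$, relation (\ref{AdjointABRelation}) gives $0=\langle x,z\rangle_H=\langle x,Bw\rangle_H=\langle Ax,w\rangle_K$, so $w\in (\operatorname{ran} A)^\perp=\ker B$ and thus $z=Bw=0$. Hence $\overline{D(A)}=H$, so $A^*$ is defined, and (\ref{AdjointABRelation}) immediately yields $B\subset A^*$; symmetrically $A\subset B^*$.

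Finally I would upgrade $B\subset A^*$ to $A^*=B$ via Lemma \ref{LemLinearOpsExtEqualityKerRanConds} by verifying equality of kernels and of ranges. For the kernels, the density of $D(A)$ just established gives $\ker A^*=(\operatorname{ran} A)^\perp=\ker B$. For the ranges, $\operatorname{ran} B\subseteq \operatorname{ran} A^*$ is immediate from $B\subset A^*$; conversely, if $z=A^*y\in\operatorname{ran} A^*$ and $u\in\ker A$, then $\langle u,z\rangle_H=\langle Au,y\rangle_K=0$, so $z\in(\ker A)^\perp=\operatorname{ran} B$. Lemma \ref{LemLinearOpsExtEqualityKerRanConds} then yields $A^*=B$, and the symmetric argument gives $B^*=A$; closedness of $A$ and $B$ is then automatic since the adjoint of a densely defined operator is always closed. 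The only real conceptual obstacle is the initial recognition that (\ref{AdjointABRelation}) makes the given decompositions orthogonal — once that is in hand, density, closed ranges, and the adjoint identifications each follow by a short one-line computation.
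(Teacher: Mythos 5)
Your proof is correct, and its skeleton --- the orthogonality of the two given decompositions, the density argument for $D(A)$, and the final identification of $A^*$ with $B$ via Lemma \ref{LemLinearOpsExtEqualityKerRanConds} --- is essentially the paper's. Where you genuinely deviate is in how closedness of $A$ and $B$ themselves is obtained: the paper proves it directly, \emph{before} any adjoint identification, by a sequence argument (take $x_n\to x$ with $Ax_n\to z$, locate $z\in(\ker B)^{\perp}=\operatorname{ran}A$, write $z=Aw$, and show $x-w\in(\operatorname{ran}B)^{\perp}=\ker A$), whereas you defer it to the very end and read it off from $A=B^*$, $B=A^*$ together with the standard fact that the adjoint of a densely defined operator is closed. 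Your ordering is shorter and makes the graph-convergence argument unnecessary; the paper's version has the mild advantage of establishing closedness independently of the adjoint identities, but nothing in your chain is circular, since the only general facts you invoke ($\ker T^*=(\operatorname{ran}T)^{\perp}$ and closedness of adjoints) need only density, which you prove first. Two smaller cosmetic differences: you cite as a ``standard fact'' that two mutually orthogonal subspaces whose algebraic sum is the whole space are each other's orthogonal complements, hence closed --- this is true, and its one-line proof (decompose $x\in V_2^{\perp}$ as $v_1+v_2$ and conclude $\lVert v_2\rVert^2=0$) is exactly the computation the paper carries out inside its orthogonality ``claim,'' so it would be worth writing out; and for the kernel/range equalities feeding Lemma \ref{LemLinearOpsExtEqualityKerRanConds} you use $\ker A^*=(\operatorname{ran}A)^{\perp}$ plus a direct computation showing $\operatorname{ran}A^*\subseteq(\ker A)^{\perp}=\operatorname{ran}B$, where the paper instead chases inclusions such as $\ker A\subseteq(\operatorname{ran}A^*)^{\perp}$; both routes are valid and of comparable length. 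I see no gaps.
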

\begin{proof}
Assume the hypotheses. First, as $A:D(A)\rightarrow K$, $B:D(B)\rightarrow H$ are linear operators then $\ker A, \operatorname{ran}B$ and $\ker B, \operatorname{ran}A$ are subspaces of $H$ and $K$, respectively. Next, we claim that the following orthogonality relations hold:
\begin{gather}
    \ker A=(\operatorname{ran}B)^{\perp},\;\;\ker B=(\operatorname{ran}A)^{\perp},\label{AdjointABKerPerpRanProp}\\
    (\ker A)^{\perp}=\operatorname{ran}B,\;\;(\ker B)^{\perp}=\operatorname{ran}A.\label{AdjointABPerpKerRanProp}
\end{gather}
We now prove this claim. Let $x\in (\operatorname{ran}B)^{\perp}.$ Then, by (\ref{AdjointABKerRanCond}), $x=u+By$ for some $u\in \ker A, y\in D(B)$ and hence together with (\ref{AdjointABRelation}) we have
\begin{gather}
    0=\langle x, By \rangle_H=\langle u+By, By\rangle_H=\langle u, By\rangle_H+\langle By, By\rangle_H\\
    =\langle Au, y\rangle_K+\langle By, By\rangle_H=\langle By, By\rangle_H
\end{gather}
implying $By=0$ so that $x=u\in \ker A$. This proves that $(\operatorname{ran}B)^{\perp}\subseteq \ker A.$ Conversely, let $x\in\ker A$. Then for any $y\in D(B)$ it follows by (\ref{AdjointABRelation}) that
\begin{gather}
    0=\langle Ax, y \rangle_K=\langle x, By\rangle_H,
\end{gather}
which implies $x\in (\operatorname{ran}B)^{\perp}.$ This proves that $\ker A\subseteq (\operatorname{ran}B)^{\perp}$ and thus, $\ker A=(\operatorname{ran}B)^{\perp}$. A similar proof shows that $\ker B=(\operatorname{ran}A)^{\perp}$. Next, it follows from the relationship $\ker A=(\operatorname{ran}B)^{\perp}$ for the subspaces $\ker A, \operatorname{ran}B$ that $(\ker A)^{\perp}=(\operatorname{ran}B)^{{\perp}{\perp}}=\overline{\operatorname{ran}B}\supseteq \operatorname{ran}B$. Conversely, if $v\in (\ker A)^{\perp}$ then by (\ref{AdjointABKerRanCond}) it follows that $v=u+By$ for some $u\in \ker A,y\in D(B)$ and hence by (\ref{AdjointABRelation}) we have
\begin{gather}
    0=\langle u, v\rangle_H=\langle u, u+By\rangle_H=\langle u, u\rangle_H+\langle u, By\rangle_H=\langle u, u\rangle_H+\langle Au, y\rangle_H=\langle u, u\rangle_H
\end{gather}
implying $v=By\in \operatorname{ran}B$. This proves that $(\ker A)^{\perp}\subseteq\operatorname{ran}B$ and thus it follows that $(\ker A)^{\perp}=\overline{\operatorname{ran}B}=\operatorname{ran}B$. A similar proof shows that $(\ker B)^{\perp}=\overline{\operatorname{ran}A}=\operatorname{ran}A$. This completes the proof of our claim.

We will now prove that $A$ and $B$ are closed operators. Suppose $\{x_n\}_{n\in \mathbb{N}}$ is a sequence in $D(A)$ converging in $H$ to $x$ and the sequence $\{Ax_n\}_{n\in \mathbb{N}}$ converges to $z$ in $K$. Then, for each $y\in \ker B$, we have by (\ref{AdjointABRelation}) that
\begin{align}
    0=\langle x, By \rangle_H=\lim_{n\rightarrow \infty } \langle x_n, By\rangle_H=\lim_{n\rightarrow \infty } \langle Ax_n, y\rangle_K=\langle z, y\rangle_K
\end{align}
implying $z\in (\ker B)^{\perp}$. From this and (\ref{AdjointABPerpKerRanProp}) we have $Aw=z$ for some $w\in D(A)$. Hence, for any $y\in D(B)$, we have
\begin{gather}
    \langle x, By \rangle_H=\lim_{n\rightarrow \infty } \langle x_n, By\rangle_H=\lim_{n\rightarrow \infty } \langle Ax_n, y\rangle_K=\langle z, y\rangle_K=\langle Aw, y\rangle_K=\langle w, By\rangle_K
\end{gather}
implying $x-w\in (\operatorname{ran}B)^{\perp}=\ker A\subseteq D(A)$, where the latter equality follows from (\ref{AdjointABKerPerpRanProp}). As $D(A)$ is a subspace of $H$, it follows that $x=x-w+w\in D(A)$ and hence $Ax=Aw=z$. This proves that $A$ is a closed operator. A similar proof shows that $B$ is a closed operator.

We will now prove that $D(A)$ and $D(B)$ are dense in $H$ and $K$, respectively. Let $u\in D(A)^{\perp}$. Then since $(\operatorname{ran}B)^{\perp} =\ker A\subseteq D(A)$ it follows from this and the fact that $\operatorname{ran}B$ is closed, that $D(A)^{\perp}\subseteq \operatorname{ran}B$. Hence, $u=By$ for some $y\in D(B)$ and for every $x\in D(A)$ we have
\begin{gather}
     \langle Ax, y \rangle_H= \langle x, By \rangle_H=\langle x, u \rangle_H=0
\end{gather}
implying $y\in (\operatorname{ran}A)^{\perp}=\ker B$ and thus, $u=By=0$. This proves $D(A)^{\perp}=\{0\}$ so that $\overline{D(A)}=D(A)^{{\perp}{\perp}}=\{0\}^{\perp}=H$. A similar proof shows that $\overline{D(B)}=K$. 

We have proven that $A:D(A)\rightarrow K$ and $B:D(B)\rightarrow H$ are densely defined closed operator with closed ranges. Now by (\ref{AdjointABRelation}) it follows that
\begin{gather}
    D(A)\subseteq D(B^*),\;\; B^*x=Ax,\;\;\text{for all }x\in D(A),\\
    D(B)\subseteq D(A^*),\;\; A^*y=By,\;\;\text{for all }y\in D(B),
\end{gather}
and hence
\begin{gather}
    \ker A\subseteq \ker B^*,\;\;\operatorname{ran}A\subseteq \operatorname{ran}B^*,\\
    \ker B\subseteq \ker A^*,\;\;\operatorname{ran}B\subseteq \operatorname{ran}A^*.
\end{gather}
It follows from this, (\ref{AdjointABKerPerpRanProp}), (\ref{AdjointABPerpKerRanProp}), and (by general properties of adjoints) $\ker B\subseteq (\operatorname{ran}B^*)^{\perp}, \ker A\subseteq (\operatorname{ran}A^*)^{\perp}$, that 
\begin{gather}
    \ker B^*=\ker A,\;\;\operatorname{ran}B^*=\operatorname{ran}A,\\
    \ker A^*=\ker B,\;\;\operatorname{ran}A^*=\operatorname{ran}B.
\end{gather}
Therefore, by Lemma \ref{LemLinearOpsExtEqualityKerRanConds} we conclude that $B^*=A$ and $A^*=B$. This completes the proof.
\end{proof}

\section{\label{sec:MinMaxDAEOps}The minimal and maximal operators associated with linear differential-algebraic equations}

In this section we will consider the linear operators associated to the linear DAEs in the following form:
\begin{gather}
    J \frac{df}{dt}+Hf=Wg\label{def:LinearDAEsInhomo}
\end{gather}
on an interval $I$ with coefficients $J,H,W$ satisfying the following:
\begin{align}
    &I \text{ is an interval (open, closed, half-open, bounded, or unbounded)},\label{HypForab}\\
    &J\in M_n(\mathbb{C}),\;\;J\not=0,\;\;J^*=-J,\label{HypForJ}\\
    &H, W:I\rightarrow M_n(\mathbb{C}),\;\;H,W\in M_n(\mathcal{M}(I)),\label{HypForHOnab}\\
    &W(t)^*=W(t)\geq 0,\;\;\det W(t)\not=0,\;\; \text{for a.e. } t\in I.\label{HypForWOnab}
\end{align}

\begin{definition}\label{def:DiffAlgDAEOp}
The linear operator $\mathcal{L}$ defined on the subspace $D(\mathcal{L})$ of $[\mathcal{M}(I)]^n$ by
\begin{gather}
 D(\mathcal{L})=\left\{f\in [\mathcal{M}(I)]^n:Jf\in[W^{1,1}_{loc}(I)]^n\right\}\label{DefDomainOfGenDAEOperator},\\
    \mathcal{L}:D(\mathcal{L})\to [\mathcal{M}(I)]^n,\\
    \mathcal{L}f=W^{-1}\left(\frac{d}{dt}Jf+ Hf\right),\;\;f\in D(\mathcal{L})\label{DefGenDAEOperator},
\end{gather}
is called the \textit{differential-algebraic (DA) operator} associated with $I, J, H, W$.
\end{definition}

The following lemma gives an alternative characterization of $D(\mathcal{L})$ and $\mathcal{L}$ in terms the orthogonal projection of $\mathbb{C}^n$ onto the range of $J$. To do this we will use the Moore-Penrose pseudoinverse \cite{19FIS, 76NV, 03BG} of the matrix $J$.
\begin{lemma}\label{lem:AltCharDAEOpAndItsDomain}
Let $J^+$ denote the Moore-Penrose pseudoinverse of $J$, i.e., the unique matrix $J^+\in M_n(\mathbb{C})$ satisfying the four (Moore-Penrose) conditions: $JJ^+J=J$, $J^+JJ^+=J^+, (J^+J)^*=J^+J, (JJ^+)^*=JJ^+$. Then $J^+J=JJ^+$ and $JJ^+$ is the orthogonal projection of $\mathbb{C}^n$ onto $\operatorname{ran}J$. Moreover, 
\begin{gather}
    D(\mathcal{L})=\left\{f\in [\mathcal{M}(I)]^n:J^+Jf\in[W^{1,1}_{loc}(I)]^n\right\},\\
    \mathcal{L}f=W^{-1}\left[J\frac{d}{dt}(J^+Jf)+ Hf\right],\;\;f\in D(\mathcal{L})
\end{gather}
\end{lemma}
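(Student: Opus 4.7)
The plan is to prove the three assertions in order: first the two facts about the pseudoinverse $J^+$, namely that $J^+J=JJ^+$ and that $JJ^+$ is the orthogonal projection onto $\operatorname{ran}J$, and then use these to establish the alternative description of $D(\mathcal{L})$ and $\mathcal{L}f$.

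For the first two assertions I would use only the four Moore--Penrose conditions together with the skew-Hermitian hypothesis $J^*=-J$. From $(JJ^+)^*=JJ^+$ and $(JJ^+)^2=J(J^+JJ^+)=JJ^+$ one sees that $JJ^+$ is a Hermitian idempotent, i.e., an orthogonal projection. Its range equals $\operatorname{ran}J$ because $JJ^+J=J$ gives $\operatorname{ran}J\subseteq\operatorname{ran}(JJ^+)\subseteq\operatorname{ran}J$. By the analogous argument applied to $J^+J$, the matrix $J^+J$ is the orthogonal projection onto $\operatorname{ran}(J^*)$. The skew-Hermitian hypothesis gives $\operatorname{ran}(J^*)=\operatorname{ran}(-J)=\operatorname{ran}J$, and since an orthogonal projection is uniquely determined by its range, I conclude $J^+J=JJ^+$.

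For the alternative characterization, the key point is that $J$ and $J^+$ are constant matrices, so left-multiplication by either preserves the class $[W^{1,1}_{loc}(I)]^n$ and commutes with $\tfrac{d}{dt}$ in the a.e.\ sense. I would show equality of the two candidate domains by two inclusions: if $J^+Jf\in[W^{1,1}_{loc}(I)]^n$ then $Jf=(JJ^+J)f=J(J^+Jf)\in[W^{1,1}_{loc}(I)]^n$, using $JJ^+J=J$; conversely, if $Jf\in[W^{1,1}_{loc}(I)]^n$ then $J^+Jf=J^+(Jf)\in[W^{1,1}_{loc}(I)]^n$. Thus the two domain descriptions agree.

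Finally, for the formula for $\mathcal{L}f$, I would observe that for any $f\in D(\mathcal{L})$ one has $Jf=J(J^+Jf)$, so differentiating and using that $J$ is a constant matrix yields
\begin{equation*}
\frac{d}{dt}(Jf)=\frac{d}{dt}\bigl(J(J^+Jf)\bigr)=J\frac{d}{dt}(J^+Jf),
\end{equation*}
and substitution into (\ref{DefGenDAEOperator}) gives the desired expression. No real obstacle is anticipated; the only subtle point is that the whole argument hinges on the skew-Hermiticity of $J$ to force $J^+J=JJ^+$, and I would be careful to use that the Moore--Penrose pseudoinverse of a matrix equals itself (rather than needing weak/strong convergence or limit arguments), so that all manipulations reduce to elementary algebraic identities on $\mathbb{C}^n$ combined with the componentwise action of constant matrices on $[W^{1,1}_{loc}(I)]^n$.
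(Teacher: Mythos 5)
Your proposal is correct and follows essentially the same route as the paper: use the Moore--Penrose identities to identify $JJ^+$ and $J^+J$ as the orthogonal projections onto $\operatorname{ran}J$ and $\operatorname{ran}J^*$, invoke $J^*=-J$ to get $\operatorname{ran}J^*=\operatorname{ran}J$ and hence $J^+J=JJ^+$, and then use $JJ^+J=J$ together with the constancy of $J$ to obtain the equivalence $Jf\in[W^{1,1}_{loc}(I)]^n\iff J^+Jf\in[W^{1,1}_{loc}(I)]^n$ and the rewritten formula for $\mathcal{L}f$. The only difference is that you spell out a few standard pseudoinverse facts the paper treats as immediate, which is harmless.
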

\begin{proof}
Let $J^+$ denote the Moore-Penrose pseudoinverse of $J$. Then it follows immediately from this that $J^+J$ and $JJ^+$ are the orthogonal projections of $\mathbb{C}^n$ onto $\operatorname{ran}J^*$ and $\operatorname{ran}J$, respectively. As $J^*=-J$ this implies $\operatorname{ran}J^*=\operatorname{ran}J$ so that $J^+J=JJ^+$. Next, suppose $f\in [\mathcal{M}(I)]^n$. Then, since $JJ^+J=J$ and $J$ is independent of $t$, it follows that
\begin{gather}
    Jf\in [W^{1,1}_{loc}(I)]^n \iff J^+Jf\in[W^{1,1}_{loc}(I)]^n.
\end{gather}
The proof of the lemma now follows immediately from this.
\end{proof}
\begin{example}\label{ex:1DPCMaxwellsDAEOp}
As an example, consider the periodic Maxwell's DAEs (\ref{def:MaxwellsPeriodicDAEs}). The corresponding DA operator $\mathcal{L}$ is associated with the interval $I=\mathbb{R}$ and the coefficients $J,H,W$ defined by (\ref{def:MaxwellsPeriodicDAEsJ}), (\ref{def:MaxwellsPeriodicDAEsH}), (\ref{def:MaxwellsPeriodicDAEsW}). Let $J^+$ denote the Moore-Penrose pseudoinverse of $J$. Then $J^+J=JJ^+$ is the orthogonal projection of $\mathbb{C}^6$ onto $\operatorname{ran}J$ and, in this example,
\begin{gather}
    J^+=-J,\;J^+J=JJ^+=\begin{bmatrix}
    1 & 0  & 0 & 0 & 0 & 0\\
    0 & 1  & 0 & 0 & 0 & 0\\
    0 & 0  & 0 & 0 & 0 & 0\\
    0 & 0  & 0 & 1 & 0 & 0\\
    0 & 0  & 0 & 0 & 1 & 0\\
    0 & 0  & 0 & 0 & 0 & 0
    \end{bmatrix},\\
    J^+Jf(z)=J^+J\begin{bmatrix}E(z)& H(z)\end{bmatrix}^T=\begin{bmatrix}E_1(z)& E_2(z)&0&H_1(z)&H_2(z)&0\end{bmatrix}^T.
\end{gather}
Thus, from Lemma \ref{lem:AltCharDAEOpAndItsDomain}, we see more clearly what we meant in the comment made in Sec.\ \ref{sec:1DPCs} that it is the tangential components $f_1(z)=\begin{bmatrix}
    E_1(z)&
    E_2(z)&
    H_1(z)&
    H_2(z)
    \end{bmatrix}^T$
which give rise to the differential part of the periodic Maxwell's DAEs (\ref{def:MaxwellsPeriodicDAEs}). In this regard with this example in mind, see Proposition \ref{PropKeySolvabilityLocalIndex1DAEs}. This concludes the example.
\end{example}

Next, define the Hilbert space $L^2(I;W)$ with inner product $\langle\cdot,\cdot\rangle_W$ by
\begin{align}
    &L^2(I;W)=\left\{f\in [\mathcal{M}(I)]^n:\int_{I}\langle W(t)f(t),f(t)\rangle dt<\infty\right\},\label{def:WeightedHilbertSpL2Funcs}\\
    &\langle f,g \rangle _W=\int_{I}\langle W(t)f(t),g(t)\rangle dt,\;\;f,g\in L^2(I;W),\label{def:WeightedHilbertSpL2FuncsInnerProd}
\end{align}
and the subspace $L^2_{loc}(I;W)$ of $\mathcal{M}(I)$ by
\begin{align}
    L^2_{loc}(I;W)=\{f\in  [\mathcal{M}(I)]^n:f\in L^2([a,b];W),\text{ for every compact interval }[a,b]\subseteq I\}.
\end{align}
In particular,
\begin{align}
     L^2_{loc}(I;W)=L^2(I;W),\text{ if } I \text{ is a compact interval.}
\end{align}

\begin{remark}\label{rem:IsometricIsomorphismWeightedHilbertSpace}
It easy to verify that left multiplication by $W^{-1/2}$ is an isometric isomorphism between the Hilbert spaces $[L^2(I)]^n$ and $L^2(I;W)$ from which it follows that
\begin{gather}
    L^2(I;W)=W^{-1/2}[L^2(I)]^n,\;L^2_{loc}(I;W)=W^{-1/2}[L^2_{loc}(I)]^n.
\end{gather}
In the case that $\det J=0$, there will be an alternative characterizations of $L^2(I;W)$ and $L^2_{loc}(I;W)$ in Lemma \ref{LemCharL2WSpace} that plays a key role in our paper.
\end{remark}

\begin{definition}\label{DefMinMaxOpGenerByTheDAEsOp}
The \textit{maximal operator $L:D(L)\rightarrow L^2(I;W)$ generated by $\mathcal{L}$} and the \textit{minimal operator $L_0':D(L_0')\rightarrow L^2(I;W)$ generated by  $\mathcal{L}$} are defined by
\begin{gather}
    D(L)=\{f\in L^2(I;W):f\in D(\mathcal{L}),\mathcal{L}f\in L^2(I;W)\},\\
    Lf=\mathcal{L}f,\;\;\text{for }f\in D(L),\\
    D(L_0')=\{f\in D(L):Jf \text{ has compact support contained in the interior of }I\},\\
    L_0'f=\mathcal{L}f,\;\;\text{for }f\in D(L_0').
\end{gather}
\end{definition}

\begin{notation}
When we need to be explicit about the dependence of $\mathcal{L}, L$ or $L_0'$ on $H$ and/or $J$ we will use the subscript $(\cdot)_H$ or $(\cdot)_{J,H}$ with these operators, e.g., $\mathcal{L}_{H}, L_{H}$ or $(L_0')_{H}$ and for the latter, we will just write $L_{H,0}'$ instead. Similarly, $(L_0')_{J,H}$ will be written instead as $L_{J,H,0}'$.
\end{notation}

As we shall see later, the following lemma and theorem are fundamental in the developing the spectral theory associated with the operator $L$ on the Hilbert space $L^2(I;W)$.
\begin{lemma}\label{LemDomainsAreSubspacesForL0primeAndLWhichAreWellDefLinearOps}
The sets $D(L_0')$ and $D(L)$ are subspaces of the Hilbert space $L^2(I;W)$. Moreover, $L_0':D(L_0')\rightarrow L^2(I;W)$ and $L:D(L)\rightarrow L^2(I;W)$ are linear operators with
\begin{gather}
    D(L_0')\subseteq D(L)\subseteq D(\mathcal{L})\cap L^2(I;W),\\
    L_0'f=Lf=\mathcal{L}f,\;\;f\in D(L_0').
\end{gather}
\end{lemma}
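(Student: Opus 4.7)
The plan is to reduce everything to one fact at the ``source'': that $D(\mathcal{L})$ is a subspace of $[\mathcal{M}(I)]^n$ and $\mathcal{L}$ is linear on it. Once that is established, the two operators $L$ and $L_0'$ are just restrictions of $\mathcal{L}$ to subspaces cut out by linear conditions, so subspace-ness and linearity propagate automatically.

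First I would check that $D(\mathcal{L})$ is a subspace. The set $[W^{1,1}_{loc}(I)]^n$ is a subspace of $[\mathcal{M}(I)]^n$, and since $J$ is a constant matrix the map $f\mapsto Jf$ from $[\mathcal{M}(I)]^n$ to itself is linear; hence $D(\mathcal{L})=\{f:Jf\in[W^{1,1}_{loc}(I)]^n\}$ is the preimage of a subspace under a linear map, so it is itself a subspace. Next I would verify that $\mathcal{L}f=W^{-1}\!\left(\tfrac{d}{dt}Jf+Hf\right)$ is well defined in $[\mathcal{M}(I)]^n$ and depends linearly on $f\in D(\mathcal{L})$: by (\ref{HypForWOnab}) the matrix $W(t)^{-1}$ exists for a.e.\ $t\in I$ and has measurable entries, while $\tfrac{d}{dt}Jf\in[L^1_{loc}(I)]^n\subseteq[\mathcal{M}(I)]^n$ for $f\in D(\mathcal{L})$, so $\mathcal{L}f\in[\mathcal{M}(I)]^n$; linearity of $\mathcal{L}$ is immediate from linearity of $\tfrac{d}{dt}$ and of left multiplication by constant/matrix-valued functions.

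Next I would handle $L$. Writing
\begin{equation*}
D(L)=D(\mathcal{L})\cap L^2(I;W)\cap \mathcal{L}^{-1}\!\bigl(L^2(I;W)\bigr),
\end{equation*}
we see $D(L)$ is an intersection of three subspaces of $[\mathcal{M}(I)]^n$ (the third one being the preimage of a subspace under the linear map $\mathcal{L}$), hence a subspace of $L^2(I;W)$. The operator $L$ is then just $\mathcal{L}|_{D(L)}$ viewed as a map into $L^2(I;W)$, so it is linear. For $L_0'$ I would take $f,g\in D(L_0')$ and $\alpha,\beta\in\mathbb{C}$; then $\alpha f+\beta g\in D(L)$ (already a subspace), and since $J(\alpha f+\beta g)=\alpha Jf+\beta Jg$, its support is contained in $\operatorname{supp}(Jf)\cup\operatorname{supp}(Jg)$, which is a compact subset of the interior of $I$. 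Hence $D(L_0')$ is a subspace and $L_0'=\mathcal{L}|_{D(L_0')}$ is linear. The chain of inclusions $D(L_0')\subseteq D(L)\subseteq D(\mathcal{L})\cap L^2(I;W)$ and the agreement $L_0'f=Lf=\mathcal{L}f$ on $D(L_0')$ are then immediate from Definition~\ref{DefMinMaxOpGenerByTheDAEsOp}.

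There is no genuine obstacle here; the whole argument is bookkeeping. The only minor point to keep track of is that $\mathcal{L}f$ is an honest element of $[\mathcal{M}(I)]^n$ (so that the phrase ``$\mathcal{L}f\in L^2(I;W)$'' in the definition of $D(L)$ makes sense), which is why I would spend one sentence noting that $W^{-1}$ is measurable a.e.\ on $I$ and that $\tfrac{d}{dt}Jf$ lies in $[L^1_{loc}(I)]^n$ for every $f\in D(\mathcal{L})$.
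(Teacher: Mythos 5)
Your proof is correct, and it is exactly the routine bookkeeping the paper has in mind: the paper simply states ``The proof is straightforward and so will be omitted,'' and your reduction of everything to the linearity of $\mathcal{L}$ on the subspace $D(\mathcal{L})$, followed by the observation that $D(L)$ and $D(L_0')$ are cut out by linear/support conditions, is the intended argument. The one point you rightly flag --- that $\mathcal{L}f$ is a genuine element of $[\mathcal{M}(I)]^n$ because $W^{-1}$ is measurable (cf.\ Lemma \ref{LemWInvWSqrtProperties}) and $\tfrac{d}{dt}Jf\in[L^1_{loc}(I)]^n$ --- is the only substantive check, and you handle it properly.
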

\begin{proof}
The proof is straightforward and so will be omitted.
\end{proof}

\begin{remark}
It follows from (\ref{DefFuncConjTransp}) and (\ref{DefFuncSpaceConjTransp}) that the linear operators $\mathcal{L}_{H^*}, L_{H^*},$ and $L_{H^*,0}'$ can be defined as above in terms of $H^*$ instead of $H$ and, in particular, Lemma \ref{LemDomainsAreSubspacesForL0primeAndLWhichAreWellDefLinearOps} is true too for these operators.
\end{remark}

\begin{theorem}\label{ThmAdjointCalcForMinMaxOpsRelatedToSymmetricOps}
  a) For $f\in D(L_0')$ and $g\in D(L_{H^*})$ we have
  \begin{align}
      \langle L_0'f,g\rangle_W = \langle f,L_{H^*}g\rangle_W.
  \end{align}
  b) For $f\in D(L_0')$ and $g\in D(L_{H^*,0}')$ we have
  \begin{align}
       \langle L_0'f,g\rangle_W = \langle f,L_{H^*,0}'g\rangle_W.
  \end{align}
\end{theorem}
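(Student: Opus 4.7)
The plan is to verify the adjointness identity by unwinding the definitions of $L_0'$, $L_{H^*}$, and the $W$-weighted inner product, cancelling the $W^{-1}$ against $W$, observing that the algebraic $H$ versus $H^*$ terms cancel pointwise, and then killing the remaining derivative terms by an integration-by-parts argument that exploits the compact support condition built into $D(L_0')$.

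Concretely, for part (a), expanding the inner products and using $W^*=W$ (so that $WW^{-1}=I$ and $(W^{-1})^*W=I$) gives
\begin{align*}
\langle L_0'f,g\rangle_W&=\int_I \bigl((Jf)'+Hf\bigr)^* g\,dt,\\
\langle f,L_{H^*}g\rangle_W&=\int_I f^*\bigl((Jg)'+H^*g\bigr)\,dt.
\end{align*}
Since $(Hf)^*g=f^*H^*g$ pointwise a.e., the $H$-contributions cancel, and we are reduced to showing
$$\int_I \bigl[((Jf)')^*g-f^*(Jg)'\bigr]\,dt=0.$$

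Next I invoke Lemma \ref{lem:AltCharDAEOpAndItsDomain}. Let $P:=J^+J=JJ^+$ denote the orthogonal projection of $\mathbb{C}^n$ onto $\operatorname{ran}J$, and set $f_r:=Pf$, $g_r:=Pg$. Because $J^+$ is a constant matrix and $Jf,\,Jg\in[W^{1,1}_{loc}(I)]^n$, the projections $f_r=J^+(Jf)$ and $g_r=J^+(Jg)$ lie in $[W^{1,1}_{loc}(I)]^n$, with $(Jf)'=(Jf_r)'=Jf_r'$ and $(Jg)'=Jg_r'$. Since $J^*=-J$ forces $\ker J=\ker J^*=(\operatorname{ran}J)^\perp$, the kernel components contribute nothing to $J$-pairings: $Jf=Jf_r$ and $f^*J=f_r^*J$, and similarly for $g$. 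Using $J^*=-J$ together with the product rule for $W^{1,1}_{loc}$ functions (legitimate because $f_r$ and $g_r$ have absolutely continuous representatives, hence are locally bounded), one obtains
$$((Jf)')^*g-f^*(Jg)'=-f_r'^*Jg_r-f_r^*Jg_r'=-\frac{d}{dt}(f_r^*Jg_r).$$

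Since $Jf$ has compact support contained in the interior of $I$, so does $f_r=J^+(Jf)$; pick $a<b$ in the interior of $I$ with $f_r\equiv 0$ off $[a,b]$, so that $f_r(a)=f_r(b)=0$. The fundamental theorem of calculus applied to the absolutely continuous scalar function $f_r^*Jg_r$ on $[a,b]$ then yields
$$\int_I\bigl[((Jf)')^*g-f^*(Jg)'\bigr]\,dt=-\bigl[f_r^*Jg_r\bigr]_a^b=0,$$
which proves (a). Part (b) is immediate: since $D(L_{H^*,0}')\subseteq D(L_{H^*})$ and $L_{H^*,0}'g=L_{H^*}g$ on the smaller domain, applying (a) with this $g$ delivers the same identity.

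The main obstacle — and what dictates the shape of the proof — is that $f$ itself need not belong to $W^{1,1}_{loc}$ (only $Jf$ does), so the product rule cannot be applied directly to $f^*Jg$. The trick is to decompose each function into its $\operatorname{ran}J$ and $\ker J$ components: the kernel parts disappear in every $J$-pairing, while the range parts $f_r,g_r$ inherit the $W^{1,1}_{loc}$ regularity from $Jf,Jg$ via the Moore–Penrose pseudoinverse, so that the standard integration-by-parts machinery can then be applied.
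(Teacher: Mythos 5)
Your proof is correct and follows essentially the same route as the paper's: both use Lemma \ref{lem:AltCharDAEOpAndItsDomain} to transfer the $W^{1,1}_{loc}$ regularity to the range components via the projection $J^+J=JJ^+$, cancel the $H$ versus $H^*$ terms pointwise, and kill the remaining derivative terms by the product rule plus the compact support of $Jf$, which makes the boundary contributions vanish. The only cosmetic difference is that you project both $f$ and $g$ and exhibit the integrand as the total derivative $-\frac{d}{dt}(f_r^*Jg_r)$, whereas the paper inserts the projection only on $g$ inside a single chain of inner-product identities.
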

\begin{proof}
a): Let $f\in D(L_0')$. Then there exists a compact interval $[a,b]\subseteq I$ such that $(Jf)(t)=0$ for every $t\in I\setminus (a,b)$ and, in particular, $(Jf)(a)=(Jf)(b)=0$. Hence, for any $g\in D(L_{H^*})$ and by Lemma \ref{lem:AltCharDAEOpAndItsDomain}, we have
\begin{align*}
      \langle L_0'f,g\rangle_W &= \langle \mathcal{L}f,g\rangle_W\\
      &=\int_{I}\left\langle \left[\frac{d}{dt}Jf(t)+ H(t)f(t)\right],g(t)\right\rangle dt\\
      &=\int_{I}\left\langle \frac{d}{dt}Jf(t),J^+Jg(t)\right\rangle +\left\langle H(t)f(t),g(t)\right\rangle dt\\
      &=\int_{I}\frac{d}{dt}\left\langle Jf(t),J^+Jg\right\rangle-\left\langle Jf(t),\frac{d}{dt}J^+Jg(t)\right\rangle+\left\langle f(t),H(t)^*g(t)\right\rangle dt\\
      &=\int_{I}\frac{d}{dt}\left\langle Jf(t),J^+Jg(t)\right\rangle+\left\langle W(t)f(t),W(t)^{-1}\left[\frac{d}{dt}Jg(t)+H(t)^*g(t)\right]\right\rangle dt\\
      &=\int_{I}\frac{d}{dt}\left\langle Jf(t),J^+Jg(t)\right\rangle dt+\langle f,L_{H^*}g\rangle_W\\
      &=\left\langle (Jf)(b),J^+(Jg)(b)\right\rangle-\left\langle (Jf)(a),J^+(Jg)(a)\right\rangle+\langle f,L_{H^*}g\rangle_W\\
      &=\langle f,L_{H^*}g\rangle_W.
  \end{align*}
  b): This follows immediately from part a) since $D(L_{H^*,0}')\subseteq D(L_{H^*})$ and $L_{H^*,0}'g=L_{H_*}g$ if $g\in D(L_{H^*,0}')$.
\end{proof}

Consider the case that $\det J=0$. Let $V\in M_n(\mathbb{C})$ be any unitary matrix, i.e.,
\begin{align}
    V\in M_n(\mathbb{C}),\;\;V^*=V^{-1},\label{HypOnVMatrix}
\end{align}
such that $V^{-1}JV=[J_{ij}]_{i,j=1,2}$ has the $2\times2$ block partitioned matrix form
\begin{align}
    V^{-1}JV=[J_{ij}]_{i,j=1,2}=\begin{bmatrix}
               J_{11} & J_{12} \\
               J_{21} & J_{22} \\
             \end{bmatrix}=\begin{bmatrix}
               J_{11} & 0 \\
               0 & 0 \\
             \end{bmatrix},\quad \det(J_{11})\neq 0\label{BlockStructJ},
\end{align}
and, more precisely,
\begin{align}
    J_{ij}\in M_{n_i\times n_j}(\mathbb{C}), i,j=1,2;\;\;J_{ij}=0,\;\;(i,j)\not = (1,1),\;\;\det(J_{11})\neq 0,
\end{align}
where we define $n_1,n_2\in \mathbb{N}$ by
\begin{align}
    n_1:=\operatorname{rank} J,\;\;n_2:=\dim \ker J=\operatorname{nullity}(J)=n-n_1.
\end{align}

\begin{remark}
It should be noted that each and every such $V$ can be constructed in the following manner: Let $v_1,\ldots, v_{n_1}$ be an orthonormal basis for $\operatorname{ran} J$ and $v_{n_1+1},\ldots, v_{n_2}$ an orthonormal basis for $\operatorname{ker} J$. Then the $n\times n$ column matrix $V=\begin{bmatrix}
v_1|\cdots|v_{n_1}|v_{n_1+1}|\cdots|v_{n_2}
\end{bmatrix}$ satisfies (\ref{HypOnVMatrix}) and (\ref{BlockStructJ}). The key point in this remark is the converse of this statement is also true and this gives insight into our reason for introducing such a unitary matrix $V$ above.
\end{remark}

Now block partition the matrices $V^{-1}HV=[H_{ij}]_{i,j=1,2}$ and $V^{-1}WV=[W_{ij}]_{i,j=1,2}$ conformal to the block structure of $V^{-1}JV$ in (\ref{BlockStructJ}),
\begin{align}
    &V^{-1}HV=[H_{ij}]_{i,j=1,2}=\begin{bmatrix}
    H_{11} & H_{12}\\
    H_{21} & H_{22}
    \end{bmatrix}\in M_{n}(\mathcal{M}(I)),\label{BlockStructH}\\
     &V^{-1}WV=[W_{ij}]_{i,j=1,2}=\begin{bmatrix}
    W_{11} & W_{12}\\
    W_{21} & W_{22}
    \end{bmatrix}\in M_{n}(\mathcal{M}(I)),\label{BlockStructW}
\end{align}
and, more precisely,
\begin{align}
    H_{ij}, W_{ij}\in M_{n_i\times n_j}(\mathcal{M}(I)),\;\;i,j=1,2.
\end{align}
If, in addition, $H_{22}^{-1}\in M_{n_2}(\mathcal{M}(I))$ [equivalently, $\det H_{22}(t)\not = 0$ for a.e.\ $t\in I$] then, for the Schur complement (see \cite{05FZ}) of $V^{-1}HV$ with respect to $H_{22}$, we have
\begin{align}
    H/H_{22}:=H_{11}-H_{21}H_{22}^{-1}H_{12}\in M_{n_1}(\mathcal{M}(I)).
\end{align}

\begin{lemma}\label{LemWInvWSqrtProperties}
Suppose $W$ satisfies the hypotheses (\ref{HypForHOnab}) and (\ref{HypForWOnab}). Then the inverse $W(t)^{-1}$ of $W(t)$ exists for a.e.\ $t\in I$ (and setting it to say the identity matrix $I_n$ when it doesn't exist), defines the function $W^{-1}:I\rightarrow M_n(\mathbb{C})$ which satisfies  $W^{-1}(t)=W(t)^{-1}$ for a.e.\ $t\in I$. In addition, $W^{-1}$ satisfies the same hypotheses (\ref{HypForHOnab}) and (\ref{HypForWOnab}) as $W$, i.e.,
\begin{align}
    &W^{-1}:I\rightarrow M_n(\mathbb{C}),\;\;W^{-1}\in M_n(\mathcal{M}(I)),\\
    &W^{-1}(t)^*=W^{-1}(t)\geq 0,\;\;\det W^{-1}(t)\not=0,\;\;\text{for a.e.\ }t\in I.
\end{align}
Furthermore, the positive square root $W(t)^{\pm 1/2}$ of $W^{\pm 1}(t)$ exists for a.e.\ $t\in I$ (and setting it to say the identity matrix $I_n$ when it doesn't exist), defines the function $W^{\pm 1/2}:I\rightarrow M_n(\mathbb{C})$ which satisfies  $W^{\pm 1/2}(t)=W(t)^{\pm 1/2}$ for a.e.\ $t\in I$. Moreover, $W^{\pm 1/2}$ satisfies the same hypotheses (\ref{HypForHOnab}) and (\ref{HypForWOnab}) as $W$, i.e.,
\begin{align}
    &W^{\pm 1/2}:I\rightarrow M_n(\mathbb{C}),\;\;W^{\pm 1/2}\in M_n(\mathcal{M}(I)),\\
    &W^{\pm 1/2}(t)^*=W^{\pm 1/2}(t)\geq 0,\;\;\det W^{\pm 1/2}(t)\not=0,\;\;\text{for a.e.\ }t\in I.
\end{align}
\end{lemma}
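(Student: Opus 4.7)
The plan is to handle $W^{-1}$ first by an algebraic argument, and then $W^{\pm 1/2}$ by a functional-calculus argument, since the inverse is essentially a rational function of the entries while the square root is not.

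For $W^{-1}$, I would invoke Cramer's rule: on the conull set where $\det W(t)\neq 0$, one has $W(t)^{-1}=(\det W(t))^{-1}\operatorname{adj}(W(t))$, where both $\det W(t)$ and the entries of the classical adjugate $\operatorname{adj}(W(t))$ are polynomials in the entries of $W(t)$. Since $W\in M_n(\mathcal{M}(I))$ and the reciprocal of a nonvanishing measurable scalar function is measurable, the function $W^{-1}$ defined by setting $W^{-1}(t)=W(t)^{-1}$ where this makes sense and $W^{-1}(t)=I_n$ on the exceptional null set belongs to $M_n(\mathcal{M}(I))$. The pointwise identities $(W^{-1})^*=(W^*)^{-1}=W^{-1}$, together with the fact (visible from the spectral decomposition) that the inverse of a positive-definite Hermitian matrix is positive definite, supply the remaining properties; invertibility of $W^{-1}(t)$ is immediate from $\det W^{-1}(t)=(\det W(t))^{-1}$.

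For $W^{\pm 1/2}$, the main obstacle is measurability, since the square-root map is not a rational function of the matrix entries. My approach would be to use the standard fact that on the closed convex cone of positive semidefinite Hermitian $n\times n$ matrices the map $A\mapsto A^{1/2}$ (the unique positive semidefinite square root) is continuous in the operator norm topology; this is a consequence of the continuous functional calculus for self-adjoint matrices, or can be proved directly via the spectral theorem together with uniform control of eigenvalues on bounded subsets. Granted this, $W^{1/2}$ is the composition of the measurable function $W\colon I\to M_n(\mathbb{C})$ (whose values lie a.e.\ in the positive semidefinite cone) with a continuous, hence Borel-measurable, square-root map, so $W^{1/2}\in M_n(\mathcal{M}(I))$. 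Applying the same argument to $W^{-1}$, which by the first step satisfies the same hypotheses as $W$, produces $W^{-1/2}\in M_n(\mathcal{M}(I))$.

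The remaining properties of $W^{\pm 1/2}$ fall out of the definition of the nonnegative square root: by construction $W^{\pm 1/2}(t)$ is Hermitian and positive semidefinite for a.e.\ $t\in I$; and since $W(t)$ and $W(t)^{-1}$ are invertible a.e., all their eigenvalues are strictly positive on a conull set, whence the positive square roots of those eigenvalues are positive and $W^{\pm 1/2}(t)$ is also invertible a.e. This exhausts all the claims, and the only nontrivial step in the whole argument is the continuity (equivalently, Borel measurability) of the matrix square-root map on the positive semidefinite cone.
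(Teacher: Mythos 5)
Your proof is correct. The paper itself gives no argument here (its proof reads that the lemma ``is obvious from the elementary theory of matrices and that for positive definite matrices''), and your write-up supplies exactly the details being taken for granted: Cramer's rule gives measurability of $W^{-1}$ since $\det W$ and the adjugate entries are polynomials in the (measurable) entries of $W$, while for $W^{\pm 1/2}$ you correctly isolate the only nontrivial point --- Lebesgue measurability of the square root --- and settle it by composing the Lebesgue-measurable map $t\mapsto W(t)$ (resp.\ $t\mapsto W^{-1}(t)$) with the continuous, hence Borel, map $A\mapsto A^{1/2}$ on the positive semidefinite cone, the Hermitian, positivity, and a.e.\ invertibility claims then being pointwise linear algebra.
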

\begin{proof} The proof is obvious from the elementary theory of matrices and that for positive definite matrices. 
\end{proof}

\begin{lemma}\label{LemWSchurComplProperties}
Let $W$ satisfy the hypotheses (\ref{HypForHOnab}) and (\ref{HypForWOnab}) and $V$ satisfy the hypotheses (\ref{HypOnVMatrix}). Then, with respect to the $2\times 2$ block matrix partitioning $V^{-1}WV=[W_{i,j}]_{i,j=1,2}$ as in (\ref{BlockStructW}), we have $W_{11}^{-1}\in M_{n_1}(\mathcal{M}(I))$ and $W_{22}^{-1}\in M_{n_2}(\mathcal{M}(I))$, and the Schur complements of $V^{-1}WV$ with respect to $W_{11}$ and $W_{22}$, i.e.,
\begin{align}
    &W/W_{11}:=W_{22}-W_{21}W_{11}^{-1}W_{12},\\
    &W/W_{22}:=W_{11}-W_{12}W_{22}^{-1}W_{21},
\end{align}
respectively, have the following properties for $i=1,2$:
\begin{align}
    &W/W_{ii}\in M_{n_i}(\mathcal{M}(I)),\\
    &(W/W_{ii})(t)^*=(W/W_{ii})(t)\geq 0,\;\;\det (W/W_{ii})(t)\not=0,\;\;\text{for a.e.\ }t\in I,\\
    &0\leq (W/W_{ii})(t)\leq W_{jj}(t)\;\;\text{for a.e.\ }t\in I,\text{ for each }j=1,2,\;j\not=i.
\end{align}
\end{lemma}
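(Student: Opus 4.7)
The plan is to reduce everything to pointwise (i.e., a.e.\ in $t$) statements about a single positive definite Hermitian matrix and its $2\times 2$ block decomposition, and then to invoke the standard Schur complement theory from \cite{05FZ} together with Lemma \ref{LemWInvWSqrtProperties}. Since $V\in M_n(\mathbb{C})$ is unitary and independent of $t$, the entries of $V^{-1}WV$ are $\mathbb{C}$-linear combinations of entries of $W$, hence $V^{-1}WV\in M_n(\mathcal{M}(I))$; moreover, for a.e.\ $t\in I$,
\[
  (V^{-1}WV)(t)^*=V^{-1}W(t)V\geq 0,\qquad \det (V^{-1}WV)(t)=\det W(t)\neq 0,
\]
so $(V^{-1}WV)(t)$ is Hermitian positive definite a.e. Hence each principal submatrix $W_{ii}(t)$ is Hermitian positive definite a.e., and in particular $\det W_{ii}(t)\neq 0$ a.e.

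Next I would establish the measurability and positivity of the Schur complements. Since $W_{ii}(t)$ is Hermitian positive definite a.e., the same argument used for Lemma \ref{LemWInvWSqrtProperties} (applied to the smaller matrix-valued function $W_{ii}$ instead of $W$) gives $W_{ii}^{-1}\in M_{n_i}(\mathcal{M}(I))$ with $W_{ii}^{-1}(t)^*=W_{ii}^{-1}(t)>0$ a.e. The Schur complement $W/W_{ii}=W_{jj}-W_{ji}W_{ii}^{-1}W_{ij}$ is then obtained from entries of $V^{-1}WV$ and of $W_{ii}^{-1}$ by finite sums and products of elements of $\mathcal{M}(I)$, so $W/W_{ii}\in M_{n_j}(\mathcal{M}(I))$. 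Taking conjugate transpose and using $W_{ji}=W_{ij}^*$ and $(W_{ii}^{-1})^*=W_{ii}^{-1}$ shows $(W/W_{ii})(t)^*=(W/W_{ii})(t)$ a.e.

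For the positivity and the two-sided bound I would rely on the classical block $LDU$-factorization \cite{05FZ}: for a.e.\ $t\in I$ one has (after the appropriate block permutation if $i=1$ or $i=2$)
\[
  (V^{-1}WV)(t)=\begin{bmatrix} I & 0 \\ W_{ji}W_{ii}^{-1} & I \end{bmatrix}\!(t)\begin{bmatrix} W_{ii} & 0 \\ 0 & W/W_{ii} \end{bmatrix}\!(t)\begin{bmatrix} I & W_{ii}^{-1}W_{ij} \\ 0 & I \end{bmatrix}\!(t).
\]
Since the outer triangular factors are invertible and are conjugate transposes of each other, and since $(V^{-1}WV)(t)$ is positive definite a.e., the middle block-diagonal factor is also positive definite a.e., which yields $(W/W_{ii})(t)>0$ and in particular $\det(W/W_{ii})(t)\neq 0$ a.e. To obtain the upper bound $(W/W_{ii})(t)\leq W_{jj}(t)$, I would simply observe that
\[
  W_{jj}(t)-(W/W_{ii})(t)=W_{ji}(t)W_{ii}(t)^{-1}W_{ij}(t)=\bigl(W_{ii}(t)^{-1/2}W_{ij}(t)\bigr)^{*}\bigl(W_{ii}(t)^{-1/2}W_{ij}(t)\bigr)\geq 0,
\]
where $W_{ii}^{-1/2}$ exists a.e.\ by Lemma \ref{LemWInvWSqrtProperties} applied to $W_{ii}$.

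I do not anticipate a serious obstacle: the only thing that requires a little care is the measurability of $W_{ii}^{-1}$ and of $W_{ii}^{-1/2}$, but this is exactly what Lemma \ref{LemWInvWSqrtProperties} packages, once one has verified pointwise (a.e.) that $W_{ii}(t)$ is Hermitian positive definite. Every other step is either pointwise linear algebra on Hermitian positive definite matrices (standard Schur complement facts from \cite{05FZ}) or closure of $\mathcal{M}(I)$ under finite algebraic operations.
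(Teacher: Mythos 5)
Your proposal is correct and follows essentially the same route as the paper: reduce to the pointwise statement that $(V^{-1}WV)(t)$ is Hermitian positive definite for a.e.\ $t\in I$, then invoke the standard Schur complement facts from \cite{05FZ} (block $LDU$ factorization and the identity $W_{jj}-W/W_{ii}=W_{ji}W_{ii}^{-1}W_{ij}\geq 0$), with measurability handled by the argument of Lemma \ref{LemWInvWSqrtProperties} applied to the blocks. The paper simply declares this "immediate from elementary properties" of positive definite block matrices, so your write-up is just a more explicit version of the same argument.
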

\begin{proof}
The proof is immediate from elementary properties of $2\times 2$ block partitioned positive definite matrices and their Schur complements (see, for instance, \cite{05FZ}) since $W(t)^*=W(t)\geq 0$ and $\det W(t)\not=0$ for a.e.\ $t\in I$ implies $W(t)$ is a positive definite matrix for a.e.\ $t\in I$ which implies so is $V^{-1}W(t)V=[W_{i,j}(t)]_{i,j=1,2}$ and hence so are the blocks $W_{11}(t)$ and $W_{22}(t)$ with $W_{12}(t)^*=W_{21}(t)$. The results now follows immediately from this.
\end{proof}

We will now characterize, in the case $\det J=0$, the spaces $L^2(I;W)$ and $L^2_{loc}(I;W)$.
\begin{lemma}\label{LemCharL2WSpace}
Suppose $I,W$ satisfy (\ref{HypForab}), (\ref{HypForHOnab}), and (\ref{HypForWOnab}), $V$ satisfy the hypotheses (\ref{HypOnVMatrix}), and block matrix partition $V^{-1}WV=[W_{i,j}]_{i,j=1,2}$ as in (\ref{BlockStructW}). Then
\begin{align}
    f\in (\mathcal{M}(I))^n \iff V^{-1}f=\begin{bmatrix}
    f_1\\
    f_2
    \end{bmatrix},\;\;f_i\in (\mathcal{M}(I))^{n_i},\;\;i=1,2.\label{DefMeasurableVectorBlockMatrixDecomp}
\end{align}
Furthermore,
\begin{gather}
    f\in L^2(I;W) \label{fInL2W}\\
    \iff \notag \\
    (W/W_{11})^{1/2}f_2\in (L^2(I))^{n_2},\;\;W_{11}^{-1/2}(W_{11}f_1+W_{12}f_2)\in (L^2(I))^{n_1}\label{fInL2WEquivCond1}\\
    \iff \notag\\
    (W/W_{22})^{1/2}f_1\in (L^2(I))^{n_1},\;\;W_{22}^{-1/2}(W_{21}f_1+W_{22}f_2)\in (L^2(I))^{n_2}.\label{fInL2WEquivCond2}
\end{gather}
Similarly, the statement remains true if we replace $L^2(I;W), (L^2(I))^{n_1},(L^2(I))^{n_2}$ in (\ref{fInL2W}), (\ref{fInL2WEquivCond1}), (\ref{fInL2WEquivCond2}) by $L^2_{loc}(I;W), (L^2_{loc}(I))^{n_1}, (L^2_{loc}(I))^{n_2},$ respectively.
\end{lemma}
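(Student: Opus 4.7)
The first equivalence in (\ref{DefMeasurableVectorBlockMatrixDecomp}) is immediate: since $V\in M_n(\mathbb{C})$ is a constant (i.e., $t$-independent) unitary matrix, the map $f\mapsto V^{-1}f$ acts entrywise by finite $\mathbb{C}$-linear combinations, so $f\in (\mathcal{M}(I))^n$ if and only if each block $f_i=(V^{-1}f)_i\in (\mathcal{M}(I))^{n_i}$. This also explains why it suffices to characterize integrability of $\langle Wf,f\rangle$ in terms of the blocks $f_1,f_2$.

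The main idea for characterizing $L^2(I;W)$ is to diagonalize the (pointwise a.e.\ positive definite) block matrix $V^{-1}WV$ by Schur complements. By Lemma \ref{LemWSchurComplProperties}, $W_{11}(t)$ and $W_{22}(t)$ are invertible for a.e.\ $t\in I$, and one has the pointwise a.e.\ block LDL$^*$ factorizations
\begin{align*}
V^{-1}WV
&=\begin{bmatrix}I & 0\\ W_{21}W_{11}^{-1} & I\end{bmatrix}
\begin{bmatrix}W_{11} & 0\\ 0 & W/W_{11}\end{bmatrix}
\begin{bmatrix}I & W_{11}^{-1}W_{12}\\ 0 & I\end{bmatrix}\\
&=\begin{bmatrix}I & W_{12}W_{22}^{-1}\\ 0 & I\end{bmatrix}
\begin{bmatrix}W/W_{22} & 0\\ 0 & W_{22}\end{bmatrix}
\begin{bmatrix}I & 0\\ W_{22}^{-1}W_{21} & I\end{bmatrix},
\end{align*}
which are valid because $W^*=W$ forces $W_{ii}^*=W_{ii}$ and $W_{12}^*=W_{21}$, so that the outer triangular factors are mutually adjoint.

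Using $V^{-1}=V^*$ and setting $g=V^{-1}f=[f_1;f_2]$, we have $\langle W(t)f(t),f(t)\rangle=\langle (V^{-1}WV)(t)\,g(t),g(t)\rangle$ for a.e.\ $t$. Applying the two factorizations and using the self-adjoint square roots $W_{ii}^{\pm 1/2}, (W/W_{ii})^{1/2}$ supplied by Lemmas \ref{LemWInvWSqrtProperties} and \ref{LemWSchurComplProperties}, a short computation yields the pointwise (a.e.) identities
\begin{align*}
\langle Wf,f\rangle
&=\bigl|W_{11}^{-1/2}(W_{11}f_1+W_{12}f_2)\bigr|^2+\bigl|(W/W_{11})^{1/2}f_2\bigr|^2\\
&=\bigl|(W/W_{22})^{1/2}f_1\bigr|^2+\bigl|W_{22}^{-1/2}(W_{21}f_1+W_{22}f_2)\bigr|^2.
\end{align*}
Since the integrand $\langle Wf,f\rangle$ is a sum of two nonnegative terms in each identity, integrating over $I$ and invoking the definition (\ref{def:WeightedHilbertSpL2Funcs}) of $L^2(I;W)$ gives the chain of equivalences (\ref{fInL2W})$\Leftrightarrow$(\ref{fInL2WEquivCond1})$\Leftrightarrow$(\ref{fInL2WEquivCond2}). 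The statement for $L^2_{loc}(I;W)$ follows at once by applying the already-proved equivalence to each compact subinterval $[a,b]\subseteq I$.

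The main technical point to be careful about is verifying the block factorizations above only hold a.e., and keeping track of self-adjointness (to ensure the outer factors really are adjoints of each other so that the expression can be recognized as a sum of squared norms); once this is in place, the rest is a direct pointwise algebraic computation followed by integration, and no real analytic obstruction appears.
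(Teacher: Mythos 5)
Your proposal is correct and follows essentially the same route as the paper: the block LDL$^*$ (Schur complement) factorization of $V^{-1}WV$, the resulting pointwise identity expressing $\langle Wf,f\rangle$ as a sum of two nonnegative squared norms, integration over $I$, and the local case by restriction to compact subintervals. The only difference is cosmetic: you write out both factorizations explicitly, whereas the paper proves the $W_{22}$-Schur complement case and notes the $W_{11}$ case is analogous.
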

\begin{proof}
The proof of the statement (\ref{DefMeasurableVectorBlockMatrixDecomp}) is obvious and so will be omitted. Also, once we proof the equivalence of the statements (\ref{fInL2W}), (\ref{fInL2WEquivCond1}), and (\ref{fInL2WEquivCond2}) then the equivalence of these statements in the local, i.e., ``loc," case follows immediately. Now we prove that (\ref{fInL2W}) iff (\ref{fInL2WEquivCond2}),  but we will omit the proof of (\ref{fInL2W}) iff (\ref{fInL2WEquivCond1}) as it is similar. By Lemma \ref{LemWInvWSqrtProperties},  we have the block factorization
\begin{align}
    V^{-1}WV=[W_{ij}]_{i,j=1,2}=\begin{bmatrix}
    I_{n_1} & W_{12}W_{22}^{-1}\\
    0 & I_{n_2}
    \end{bmatrix}\begin{bmatrix}
    W/W_{22} & 0\\
    0 & W_{22}
    \end{bmatrix}\begin{bmatrix}
    I_{n_1} & 0\\
    W_{22}^{-1}W_{21} & I_{n_2}
    \end{bmatrix}.
\end{align}
Next, it follows from this and Lemma \ref{LemWSchurComplProperties} together with Lemma \ref{LemWInvWSqrtProperties} applied to $W/W_{22}$ that for any $f\in (\mathcal{M}(I))^n$ we have
\begin{gather}
    \int_{I}(W(t)f(t),f(t))dt=\int_{I}(V^{-1}W(t)V[V^{-1}f(t)],[V^{-1}f(t)])dt\\
    =\int_{I}\left(\begin{bmatrix}
    W/W_{22} & 0\\
    0 & W_{22}
    \end{bmatrix}\begin{bmatrix}
    I_{n_1} & 0\\
    W_{22}^{-1}W_{21} & I_{n_2}
    \end{bmatrix}\begin{bmatrix}
    f_1\\
    f_2
    \end{bmatrix},\begin{bmatrix}
    I_{n_1} & 0\\
    W_{22}^{-1}W_{21} & I_{n_2}
    \end{bmatrix}\begin{bmatrix}
    f_1\\
    f_2
    \end{bmatrix}\right)(t)dt\\
    \int_{I}((W/W_{22})(t)f_1(t),f_2(t))+(W_{22}(t)[W_{22}^{-1}(t)W_{21}(t)f_1(t)+f_2],W_{21}(t)f_1(t)+f_2(t)) dt\\
    =\int_{I}||[(W/W_{22})^{1/2}f_1](t)||^2dt+\int_{I}||[W_{22}^{-1/2}(W_{21}f_1+W_{22}f_2](t))||^2dt.
\end{gather}
The proof that (\ref{fInL2W}) iff (\ref{fInL2WEquivCond2}) follows immediately from this, which completes the proof.
\end{proof}

\begin{definition}[Index-1 hypotheses]\label{DefIndex1Hyp}
The following set of hypotheses are called the \textit{local index-1 hypotheses} for $H, W$ with respect to $J$ on the interval $I$:
\begin{align}
    \text{If } \det J\not = 0 \text{ then } H, W\in M_n(L^1_{loc}(I)).\label{HypIndex1LocalPartNeg1}
\end{align}
If $\det J = 0$ then
\begin{gather}
    H_{22}^{-1}\in M_{n_2}(\mathcal{M}(I)),\label{HypIndex1LocalPartNeg0Pt5}\\
    H_{12}H_{22}^{-1}W_{22}^{1/2}\in M_{n_1\times n_2}(L^{2}_{loc}(I)),\label{HypIndex1LocalPart2}\\
    H/H_{22}, W_{11}\in M_{n_1}(L^1_{loc}(I)),\label{HypIndex1LocalPart3}\\
    W_{22}^{1/2}H_{22}^{-1}W_{22}^{1/2}\in M_{n_2}(L^{\infty}_{loc}(I))\label{HypIndex1LocalPart0},\\
    W_{22}^{1/2}(W_{22}^{-1}W_{21}-H_{22}^{-1}H_{21})\in M_{n_2\times n_1}(L^2_{loc}(I)),\label{HypIndex1LocalPart1}\\
    W/W_{22}\in M_{n_1}(L^1_{loc}(I))\label{HypIndex1LocalPart1Pt5}
\end{gather}
Similarly, if we drop in the hypotheses above the ``loc" then these are called the \textit{index-1 hypotheses}.
\end{definition}

The next lemma and corollary gives useful simplification of these hypotheses.
\begin{lemma}\label{lem:SimplifiedIndex1Hyp}
Suppose $\det J=0$. Then the local index-$1$ hypotheses for $H,W$ with respect to $J$ on the interval $I$ are true if and only if the following conditions are satisfied:
\begin{gather}
    H_{22}^{-1}\in M_{n_2}(\mathcal{M}(I)),\;W_{22}^{1/2}H_{22}^{-1}W_{22}^{1/2}\in M_{n_2}(L^{\infty}_{loc}(I))\\
    H/H_{22}, W_{11}\in M_{n_1}(L^1_{loc}(I)),\\
    H_{12}H_{22}^{-1}W_{22}^{1/2}\in M_{n_1\times n_2}(L^{2}_{loc}(I)),\;W_{22}^{1/2}H_{22}^{-1}H_{21}\in M_{n_2\times n_1}(L^2_{loc}(I)).
\end{gather}
Similarly, the statement with the ``local" and ``loc" dropped is true.
\end{lemma}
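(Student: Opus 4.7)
The plan is to recognize that four of the six local index-$1$ conditions --- (\ref{HypIndex1LocalPartNeg0Pt5}), (\ref{HypIndex1LocalPart2}), (\ref{HypIndex1LocalPart3}), (\ref{HypIndex1LocalPart0}) --- appear verbatim in the simplified list, so the only real content is to show that, in the presence of those four, the pair (\ref{HypIndex1LocalPart1})--(\ref{HypIndex1LocalPart1Pt5}) is equivalent to the single new simplified condition $W_{22}^{1/2}H_{22}^{-1}H_{21} \in M_{n_2\times n_1}(L^2_{loc}(I))$.

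The crucial intermediate fact I would establish first is that $W_{22}^{-1/2}W_{21}$ is automatically in $M_{n_2 \times n_1}(L^2_{loc}(I))$ whenever $W_{11} \in M_{n_1}(L^1_{loc}(I))$. This uses the positivity of the Schur complement furnished by Lemma \ref{LemWSchurComplProperties}: one has $W/W_{22} = W_{11} - W_{12}W_{22}^{-1}W_{21} \geq 0$ pointwise a.e., so
\begin{equation*}
(W_{22}^{-1/2}W_{21})^{*}(W_{22}^{-1/2}W_{21}) = W_{12}W_{22}^{-1}W_{21} \leq W_{11}
\end{equation*}
in the positive semidefinite order a.e. Taking traces gives $\|W_{22}^{-1/2}W_{21}\|_{F}^{2} \leq \operatorname{tr}(W_{11})$ a.e., and since $W_{11} \in L^1_{loc}$ entrywise so is its trace, whence $W_{22}^{-1/2}W_{21} \in L^2_{loc}$ entrywise. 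This step --- upgrading $L^1_{loc}$ regularity of the diagonal block $W_{11}$ to $L^2_{loc}$ regularity of the off-diagonal object $W_{22}^{-1/2}W_{21}$ via the Schur-complement inequality and trace monotonicity --- is the main (and really the only nontrivial) step; everything else is algebraic rearrangement.

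With this in hand, both directions of the equivalence fall out of the identities
\begin{equation*}
W_{22}^{1/2}\bigl(W_{22}^{-1}W_{21}-H_{22}^{-1}H_{21}\bigr) = W_{22}^{-1/2}W_{21} - W_{22}^{1/2}H_{22}^{-1}H_{21}
\end{equation*}
and
\begin{equation*}
W/W_{22} = W_{11} - (W_{22}^{-1/2}W_{21})^{*}(W_{22}^{-1/2}W_{21}).
\end{equation*}
The first identity shows that (\ref{HypIndex1LocalPart1}) and the simplified condition $W_{22}^{1/2}H_{22}^{-1}H_{21} \in M_{n_2\times n_1}(L^2_{loc}(I))$ differ by an $L^2_{loc}$ term, hence are equivalent; the second exhibits $W/W_{22}$ as the difference of an $L^1_{loc}$ matrix function (namely $W_{11}$) and a product of two $L^2_{loc}$ matrix functions, so (\ref{HypIndex1LocalPart1Pt5}) is redundant and carries no additional information. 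This proves the equivalence under the standing four conditions. The global (``loc"-free) version follows by running the identical argument with every $L^p_{loc}(I)$ replaced by $L^p(I)$, the pointwise Schur inequality and the algebraic identities being unchanged.
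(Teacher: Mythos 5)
Your proposal is correct and follows essentially the same route as the paper: the same key step of upgrading $W_{11}\in M_{n_1}(L^1_{loc}(I))$ to $W_{22}^{-1/2}W_{21}\in M_{n_2\times n_1}(L^2_{loc}(I))$ via the Schur-complement identity $W_{11}-W/W_{22}=(W_{22}^{-1/2}W_{21})^*(W_{22}^{-1/2}W_{21})$ with $0\le W/W_{22}\le W_{11}$, followed by the same cancellation identity showing (\ref{HypIndex1LocalPart1}) is equivalent to $W_{22}^{1/2}H_{22}^{-1}H_{21}\in M_{n_2\times n_1}(L^2_{loc}(I))$ and that (\ref{HypIndex1LocalPart1Pt5}) is automatic.
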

\begin{proof}
Suppose $\det J=0$. Then by hypotheses on $W$ we have
\begin{gather}
    W_{11}-W/W_{22}=W_{12}W_{22}^{-1}W_{21}=(W_{22}^{-1/2}W_{21})^*
(W_{22}^{-1/2}W_{21})
\end{gather}
and $0\leq W/W_{22}\leq W_{11}$.
Assume that $W_{11}\in M_{n_1}(L^1_{loc}(I))$. Then this implies $W/W_{22}\in M_{n_1}(L^1_{loc}(I))$ and $W_{22}^{-1/2}W_{21}\in M_{n_2\times n_1}(L^2_{loc}(I))$.
And from this it follows that
\begin{gather}
   \;W_{22}^{1/2}(W_{22}^{-1}W_{21}-H_{22}^{-1}H_{21})\in M_{n_2\times n_1}(L^2_{loc}(I))\\
    \iff W_{22}^{1/2}H_{22}^{-1}H_{21}\in M_{n_2\times n_1}(L^2_{loc}(I)).
\end{gather}
The proof of the lemma now follows immediately from this.
\end{proof}

\begin{corollary}\label{cor:SimplifiedIndex1HypImplyItForAdjoints}
The (local) index-$1$ hypotheses for $H,W$ with respect to $J$ on the interval $I$ are true if and only if the (local) index-$1$ hypotheses for $H^*,W$ with respect to $J$ on the interval $I$ are true.
\end{corollary}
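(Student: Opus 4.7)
The plan is to apply Lemma \ref{lem:SimplifiedIndex1Hyp} to both sides of the equivalence, reducing the (local) index-$1$ hypotheses to the four simpler conditions listed there, and then to verify directly that the conditions for $H^*$ are merely a relabeling of the conditions for $H$. The key computation is to express the $2\times 2$ block partition of $V^{-1}H^*V$ in terms of the blocks of $V^{-1}HV$: since $V^{-1}H^*V = (V^{-1}HV)^*$, writing $V^{-1}H^*V = [\widetilde{H}_{ij}]_{i,j=1,2}$ conformally with (\ref{BlockStructH}), I get $\widetilde{H}_{11}=H_{11}^*$, $\widetilde{H}_{22}=H_{22}^*$, $\widetilde{H}_{12}=H_{21}^*$, and $\widetilde{H}_{21}=H_{12}^*$. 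The blocks of $W$ are unchanged since we are using the same $V$ and the same $W$.

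Next I would compute the quantities appearing in Lemma \ref{lem:SimplifiedIndex1Hyp} applied to $H^*$: $\widetilde{H}_{22}^{-1}=(H_{22}^{-1})^*$;
\begin{align*}
W_{22}^{1/2}\widetilde{H}_{22}^{-1}W_{22}^{1/2}&=\bigl(W_{22}^{1/2}H_{22}^{-1}W_{22}^{1/2}\bigr)^*;\\
\widetilde{H}/\widetilde{H}_{22}&=H_{11}^*-H_{21}^*(H_{22}^{-1})^*H_{12}^*=(H/H_{22})^*;\\
\widetilde{H}_{12}\widetilde{H}_{22}^{-1}W_{22}^{1/2}&=H_{21}^*(H_{22}^{-1})^*W_{22}^{1/2}=\bigl(W_{22}^{1/2}H_{22}^{-1}H_{21}\bigr)^*;\\
W_{22}^{1/2}\widetilde{H}_{22}^{-1}\widetilde{H}_{21}&=W_{22}^{1/2}(H_{22}^{-1})^*H_{12}^*=\bigl(H_{12}H_{22}^{-1}W_{22}^{1/2}\bigr)^*.
\end{align*}
Then I would invoke the elementary fact that for any matrix-valued function $A$ of appropriate size and any $p\in[1,\infty]$, membership $A\in M_{n,m}(L^p_{loc}(I))$ (resp.\ $M_{n,m}(\mathcal{M}(I))$) is equivalent to $A^*\in M_{m,n}(L^p_{loc}(I))$ (resp.\ $M_{m,n}(\mathcal{M}(I))$), since conjugate transposition preserves the absolute value of each entry and only permutes entries.

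Combining these two steps, the four simplified hypotheses for $H^*,W$ on $I$ are term-by-term equivalent to the four simplified hypotheses for $H,W$ on $I$, with the roles of (\ref{HypIndex1LocalPart2}) and the analog $W_{22}^{1/2}H_{22}^{-1}H_{21}\in M_{n_2\times n_1}(L^2_{loc}(I))$ from Lemma \ref{lem:SimplifiedIndex1Hyp} simply swapped. Finally, the same argument works verbatim with the ``loc" dropped (replacing $L^p_{loc}(I)$ by $L^p(I)$ throughout), establishing both the local and non-local versions. I do not anticipate any real obstacle here; the entire proof is a bookkeeping exercise in conjugate transposition, and its length is essentially that of displaying the four identities above.
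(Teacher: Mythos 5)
Your proposal is correct and takes essentially the same route as the paper, whose proof is precisely to take matrix adjoints of the conditions in Lemma \ref{lem:SimplifiedIndex1Hyp}; your identities $\widetilde{H}_{22}^{-1}=(H_{22}^{-1})^*$, $\widetilde{H}/\widetilde{H}_{22}=(H/H_{22})^*$, $\widetilde{H}_{12}\widetilde{H}_{22}^{-1}W_{22}^{1/2}=\bigl(W_{22}^{1/2}H_{22}^{-1}H_{21}\bigr)^*$, $W_{22}^{1/2}\widetilde{H}_{22}^{-1}\widetilde{H}_{21}=\bigl(H_{12}H_{22}^{-1}W_{22}^{1/2}\bigr)^*$, together with the observation that conjugate transposition preserves membership in the relevant $L^p$ classes, are exactly the bookkeeping the paper intends. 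The only point to add is the case $\det J\not=0$, where Lemma \ref{lem:SimplifiedIndex1Hyp} does not apply and the hypotheses of Def.\ \ref{DefIndex1Hyp} reduce to $H,W\in M_n(L^1_{loc}(I))$, which is clearly equivalent to $H^*,W\in M_n(L^1_{loc}(I))$, as the paper notes.
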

\begin{proof}
The proof of this corollary follows immediately from considering matrix adjoints of the conditions in Lemma \ref{lem:SimplifiedIndex1Hyp} if $\det J=0$ or those in Def.\ \ref{DefIndex1Hyp} if $\det J\not =0.$
\end{proof}

The reason for such hypotheses will become clear as we move forward in this section, but the next proposition gives the main reason why.

\begin{proposition}\label{PropKeySolvabilityLocalIndex1DAEs}
Suppose $I,J,H,W$ satisfy (\ref{HypForab})--(\ref{HypForWOnab}) and, in the case $\det J=0$, that $V$ satisfies the hypotheses (\ref{HypOnVMatrix}), $V^{-1}JV=[W_{i,j}]_{i,j=1,2}$ has the block form (\ref{BlockStructJ}), and $V^{-1}HV=[H_{i,j}]_{i,j=1,2}, V^{-1}WV=[W_{i,j}]_{i,j=1,2}$ have the conformal block structure in (\ref{BlockStructH}), (\ref{BlockStructW}), respectively. Then the following statements are true:\\
\noindent (i) If (\ref{HypIndex1LocalPartNeg0Pt5}) then
\begin{gather}
      \mathcal{L}f=g\label{InhomogEqForMathCalL}\\
      \iff \notag\\
       g\in [\mathcal{M}(I)]^n,\label{InhomogEqForMathCalLEquivSys1}\\
      f_1\in [W^{1,1}_{loc}(I)]^{n_1},\label{InhomogEqForMathCalLEquivSys2}\\
 J_{11}\frac{df_1}{dt}+H/H_{22}f_1=F,\label{InhomogEqForMathCalLEquivSys3}\\
 f_2=H_{22}^{-1}(W_{21}g_1+W_{22}g_2)-H_{22}^{-1}H_{21}f_1,\label{InhomogEqForMathCalLEquivSys4}
\end{gather}
where $F\in [\mathcal{M}(I)]^{n_2}$ is defined by
\begin{align}
    F=(W_{11}g_1+W_{12}g_2)-H_{12}H_{22}^{-1}(W_{21}g_1+W_{22}g_2)\label{DefFfunction},
\end{align}
and $f,g, f_1,f_2, g_1, g_2$ are related by
\begin{align}
    V^{-1}f=\begin{bmatrix}
    f_1\\
    f_2
    \end{bmatrix},\;\;V^{-1}g=\begin{bmatrix}
    g_1\\
    g_2
    \end{bmatrix}.\label{InhomogEqForMathCalLEquivSys5}
\end{align}\\
\noindent (ii) If (\ref{HypIndex1LocalPartNeg1}) or (\ref{HypIndex1LocalPartNeg0Pt5})--(\ref{HypIndex1LocalPart3}) then for any $g\in L^2_{loc}(I;W),$ $t_0\in I,$ and $f_0\in \operatorname{ran} J$ there is a unique solution $f\in D(\mathcal{L})$ to the IVP
\begin{align}
    \mathcal{L}f=g,\;\;(Jf)(t_0)=f_0.\label{DAEIVP}
\end{align}
In addition, in the case $\det J\not =0$ or if (\ref{HypIndex1LocalPart0})--(\ref{HypIndex1LocalPart1Pt5}) in the case $\det J=0$, then \begin{align}
    f\in L^2_{loc}(I;W).\label{L2LocIWInclusionForCompact}
\end{align}
\end{proposition}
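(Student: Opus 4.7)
The plan is to use the unitary change of variables $V$ to block-decompose the DAE $\mathcal{L}f=g$ into an ODE for the ``differential'' component $f_1$ coupled to an algebraic constraint determining the ``algebraic'' component $f_2$; once the DAE is reduced to a standard linear ODE with $L^1_{loc}$ coefficients, part (ii) will follow from Theorem \ref{ThmODEIVPExistenceUniquenessSolns} and the $L^2_{loc}(I;W)$ conclusion from Lemma \ref{LemCharL2WSpace}. The case $\det J\neq 0$ is essentially a direct application of the ODE theorem, noting that $Wg=W^{1/2}(W^{1/2}g)\in (L^1_{loc}(I))^n$ because $W\in L^1_{loc}$ by (\ref{HypIndex1LocalPartNeg1}) and $W^{1/2}g\in L^2_{loc}$ since $g\in L^2_{loc}(I;W)$, so I will concentrate on the degenerate case $\det J=0$.

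For part (i), writing $V^{-1}f=(f_1,f_2)^T$ and $V^{-1}g=(g_1,g_2)^T$ and invoking Lemma \ref{lem:AltCharDAEOpAndItsDomain} together with the block form (\ref{BlockStructJ}) of $V^{-1}JV$, the membership $Jf\in (W^{1,1}_{loc}(I))^n$ becomes equivalent to $f_1\in (W^{1,1}_{loc}(I))^{n_1}$ since $J_{11}$ is a constant invertible matrix, and the equation $\mathcal{L}f=g$ becomes the $2\times 2$ block system
\[
J_{11}\frac{df_1}{dt}+H_{11}f_1+H_{12}f_2=W_{11}g_1+W_{12}g_2,\qquad H_{21}f_1+H_{22}f_2=W_{21}g_1+W_{22}g_2.
\]
Under (\ref{HypIndex1LocalPartNeg0Pt5}) the second (algebraic) row is solved pointwise a.e.\ for $f_2$ to give (\ref{InhomogEqForMathCalLEquivSys4}); substituting into the first row and collecting terms produces the ODE (\ref{InhomogEqForMathCalLEquivSys3}) with forcing $F$ of (\ref{DefFfunction}). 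The reverse implication is obtained by reversing these purely algebraic manipulations.

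For part (ii), I will apply Theorem \ref{ThmODEIVPExistenceUniquenessSolns} to (\ref{InhomogEqForMathCalLEquivSys3}) rewritten as $\frac{df_1}{dt}+J_{11}^{-1}(H/H_{22})f_1=J_{11}^{-1}F$. The coefficient matrix lies in $M_{n_1}(L^1_{loc}(I))$ by (\ref{HypIndex1LocalPart3}); the more delicate task, which I expect to be the main obstacle, is verifying $F\in (L^1_{loc}(I))^{n_1}$. The trick is to factor each summand of $F$ as an $L^2_{loc}$ matrix times an $L^2_{loc}$ vector: write
\[
W_{11}g_1+W_{12}g_2=W_{11}^{1/2}\cdot\bigl[W_{11}^{-1/2}(W_{11}g_1+W_{12}g_2)\bigr],
\]
where the first factor is $L^2_{loc}$ because $W_{11}\in L^1_{loc}$ by (\ref{HypIndex1LocalPart3}) and the second is $L^2_{loc}$ by Lemma \ref{LemCharL2WSpace} applied to $g$, and similarly
\[
H_{12}H_{22}^{-1}(W_{21}g_1+W_{22}g_2)=\bigl[H_{12}H_{22}^{-1}W_{22}^{1/2}\bigr]\cdot\bigl[W_{22}^{-1/2}(W_{21}g_1+W_{22}g_2)\bigr],
\]
whose factors are $L^2_{loc}$ by (\ref{HypIndex1LocalPart2}) and Lemma \ref{LemCharL2WSpace}, respectively. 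The initial condition $(Jf)(t_0)=f_0\in\operatorname{ran}J$ translates, via $V^{-1}f_0=(c,0)^T$, to $f_1(t_0)=J_{11}^{-1}c$, so Theorem \ref{ThmODEIVPExistenceUniquenessSolns} delivers a unique $f_1$, and then $f_2$ is recovered from (\ref{InhomogEqForMathCalLEquivSys4}).

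For the final assertion (\ref{L2LocIWInclusionForCompact}), I will verify the two conditions in the characterization (\ref{fInL2WEquivCond2}) of $L^2_{loc}(I;W)$ from Lemma \ref{LemCharL2WSpace}. The first, $(W/W_{22})^{1/2}f_1\in (L^2_{loc}(I))^{n_1}$, holds because $W/W_{22}\in L^1_{loc}$ by (\ref{HypIndex1LocalPart1Pt5}) and $f_1\in W^{1,1}_{loc}(I)$ is continuous, hence locally bounded. For the second, substituting (\ref{InhomogEqForMathCalLEquivSys4}) and rearranging yields
\[
W_{22}^{-1/2}(W_{21}f_1+W_{22}f_2)=W_{22}^{1/2}(W_{22}^{-1}W_{21}-H_{22}^{-1}H_{21})f_1+\bigl[W_{22}^{1/2}H_{22}^{-1}W_{22}^{1/2}\bigr]\cdot W_{22}^{-1/2}(W_{21}g_1+W_{22}g_2),
\]
whose first term lies in $L^2_{loc}$ by (\ref{HypIndex1LocalPart1}) and local boundedness of $f_1$, and whose second term lies in $L^2_{loc}$ by (\ref{HypIndex1LocalPart0}) and Lemma \ref{LemCharL2WSpace}. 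The recurring theme is that the local index-$1$ hypotheses are precisely the bookkeeping one needs: each product in $F$ and in the $L^2$ expressions splits into an $L^2_{loc}\times L^2_{loc}$ or $L^{\infty}_{loc}\times L^2_{loc}$ pairing controlled by one of (\ref{HypIndex1LocalPartNeg0Pt5})--(\ref{HypIndex1LocalPart1Pt5}).
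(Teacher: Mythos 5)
Your proposal is correct and follows essentially the same route as the paper's own proof: the block reduction via $V$ and Lemma \ref{lem:AltCharDAEOpAndItsDomain} for part (i), the factorizations $W_{11}^{1/2}\cdot W_{11}^{-1/2}(W_{11}g_1+W_{12}g_2)$ and $H_{12}H_{22}^{-1}W_{22}^{1/2}\cdot W_{22}^{-1/2}(W_{21}g_1+W_{22}g_2)$ to get $F\in (L^1_{loc}(I))^{n_1}$ before invoking Theorem \ref{ThmODEIVPExistenceUniquenessSolns}, and the verification of (\ref{fInL2WEquivCond2}) through the same rearranged identity for $W_{22}^{-1/2}(W_{21}f_1+W_{22}f_2)$. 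The only detail left implicit is the $f\in L^2_{loc}(I;W)$ check in the $\det J\neq 0$ case (in the paper this follows from $f\in [W^{1,1}_{loc}(I)]^n\subseteq [L^{\infty}_{loc}(I)]^n$ and $W\in M_n(L^1_{loc}(I))$), which is routine.
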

\begin{proof}
(i): Assume the hypotheses. ($\Rightarrow$): Suppose (\ref{InhomogEqForMathCalL}) is true. Then $f\in D(L)$ and so $Jf\in [W^{1,1}_{loc}(I)]^n$. This implies that $f_1\in [W^{1,1}_{loc}(I)]^{n_1}, f_2\in [\mathcal{M}(I)]^{n_2},$ where $f_1,f_2$ are related to $f$ by (\ref{InhomogEqForMathCalLEquivSys5}). Next, since $\mathcal{L}f=g$ then $g\in [\mathcal{M}(I)]^{n}$ and $\frac{d}{dt}(Jf)+Hf=W\mathcal{L}f=Wg$ (with equality in $[\mathcal{M}(I)]^{n}$) which yields the system of equations:
\begin{align}
    &J_{11}\frac{df_1}{dt}+H_{11}f_1+H_{12}f_2=W_{11}g_1+W_{12}g_2,\\
    &H_{21}f_1+H_{22}f_2 = W_{21}g_1+W_{22}g_2,
\end{align}
where $g_1,g_2$ are related to $f$ by (\ref{InhomogEqForMathCalLEquivSys5}). Solving these equations for $f_2$ in terms of $f_1,g_1,g_2$ yields the equivalent system of equations:
\begin{align}
    &J_{11}\frac{df_1}{dt}+H/H_{22}f_1=W_{11}g_1+W_{12}g_2-H_{12}H_{22}^{-1}(W_{21}g_1+W_{22}g_2),\\
    &f_2 = H_{22}^{-1}(W_{21}g_1+W_{22}g_2)-H_{22}^{-1}H_{21}f_1,
\end{align}
which are equivalent to the system of equations (\ref{InhomogEqForMathCalLEquivSys3}) and (\ref{InhomogEqForMathCalLEquivSys4}), where $F$ is defined by (\ref{DefFfunction}). Thus, we have proven that (\ref{InhomogEqForMathCalL}) implies (\ref{InhomogEqForMathCalLEquivSys1})--(\ref{DefFfunction}), with $f,g$ and $f_1,f_2, g_1, g_2$ are related by (\ref{InhomogEqForMathCalLEquivSys5}). ($\Leftarrow$): Conversely, suppose $f,g$ satisfy (\ref{InhomogEqForMathCalLEquivSys1})--(\ref{DefFfunction}), where $f,g$ and $f_1,f_2, g_1, g_2$ are related by (\ref{InhomogEqForMathCalLEquivSys5}). Then one can verify that $f\in D(\mathcal{L})$ and $\frac{d}{dt}(Jf)+Hf=Wg$ (with equality in $[\mathcal{M}(I)]^{n}$), which implies $\mathcal{L}f=g$. This completes the proof of (i). (ii): Assume (\ref{HypIndex1LocalPartNeg1}), in the case $\det J\not=0$, or, in the case $\det J=0$, that (\ref{HypIndex1LocalPartNeg0Pt5})--(\ref{HypIndex1LocalPart3}). Let $g\in L^2_{loc}(I;W), t_0\in I, f_0\in \operatorname{ran}J$.

Consider the first case in which $\det J\not=0$ and hypotheses (\ref{HypIndex1LocalPartNeg1}) are true. Then $Wg\in [L^1_{loc}(I)]^n$ since, for any compact interval $[a,b]\subseteq I$, we have $g\in L^2([a,b];W)$ and $W\in M_n(L^1([a,b]))$ by hypothesis so it follows by Holder's inequality  that
\begin{gather}
    \int_{[a,b]}||W(t)g(t)||dt=\int_{[a,b]}(W(t)g(t),W(t)g(t))^{1/2}dt\\
    \leq \int_{[a,b]}||W(t)||^{1/2}||W(t)^{1/2}g(t)||dt\\
    \leq \int_{[a,b]}(||W(t)||^{1/2})^2dt^{1/2}\int_{[a,b]}||W(t)^{1/2}g(t)||^2dt^{1/2}\\
    =\left[\int_{[a,b]}||W(t)||dt\int_{[a,b]}(W(t)g(t),g(t))dt\right]^{1/2}<\infty.
\end{gather}
It also follows from this that $J^{-1}Wg\in [L^1_{loc}(I)]^n$ and by hypotheses that $J^{-1}H\in M_n(L^1_{loc}(I))$. By Theorem \ref{ThmODEIVPExistenceUniquenessSolns}, there exists a unique solution $f\in [W^{1,1}_{loc}(I)]^n=D(\mathcal{L})$ to the ODE IVP
\begin{align}
    \frac{d}{dt}f+J^{-1}Hf=J^{-1}Wg,\;\;f(t_0)=J^{-1}f_0\label{ODEIVPOnI}
\end{align}
on $I$. The proof of the statement (ii) in the case $\det J\not=0$ now follows immediately from this with the exception that we still need to prove $f\in L^2_{loc}(I;W)$. But this follows from the fact that $f\in [W^{1,1}_{loc}(I)]^n\subseteq [L^{\infty}_{loc}(I)]^n$ so by Holder's inequality  we have for any compact interval $[a,b]\subseteq I$,
\begin{gather}
\int_{[a,b]}(W(t)f(t),f(t))dt\leq \int_{[a,b]}||W(t)||||f(t)||^2dt\\
\leq \left(\esssup_{t\in I}||f(t)||\right)^2\int_{[a,b]}||W(t)||dt<\infty,
\end{gather}
implying that $f\in L^2_{loc}(I;W)$.

Consider now, the second case in which $\det J=0$ and hypotheses (\ref{HypIndex1LocalPartNeg0Pt5})--(\ref{HypIndex1LocalPart3}) are true. First, it follows from the block structure (\ref{BlockStructJ}) of $V^{-1}JV$ that
\begin{align}
    V^{-1}f_0=\begin{bmatrix}
    (f_1)_0\\
    0
    \end{bmatrix}
\end{align}
for a unique $(f_1)_0\in \mathbb{C}^{n_1}$. Next, by hypothesis (\ref{HypIndex1LocalPart3}) we know that $W_{11}\in M_{n_1}(L^1_{loc}(I))$ from which it follows that $W_{11}^{1/2}\in M_{n_1}(L^2_{loc}(I))$. This together with $g\in L^2_{loc}(I;W)$, Lemma \ref{LemCharL2WSpace}, and hypothesis (\ref{HypIndex1LocalPart2}) implies by Holder's inequality  that $F$ defined in (\ref{DefFfunction}) satisfies
\begin{gather}
F=W_{11}^{1/2}[W_{11}^{-1/2}(W_{11}g_1+W_{12}g_2)]-H_{12}H_{22}^{-1}W_{22}^{1/2}[W_{22}^{-1/2}(W_{21}g_1+W_{22}g_2)],\\
    F\in [L^1_{loc}(I)]^{n_1}.
\end{gather}
This and hypothesis (\ref{HypIndex1LocalPart3}) implies that $J_{11}^{-1}F\in [L^1_{loc}(I)]^{n_1}$ and $J_{11}^{-1}H/H_{22}\in M_{n_1}(L^1_{loc}(I))$. By Theorem \ref{ThmODEIVPExistenceUniquenessSolns}, there exists a unique solution $f_1\in [W^{1,1}_{loc}(I)]^{n_1}$ to the ODE IVP
\begin{align}
    &\frac{df_1}{dt}+J_{11}^{-1}H/H_{22}f_1=J_{11}^{-1}F,\;\;f_1(t_0)=J_{11}^{-1}(f_1)_0.\label{ReducedODEIVPOnI}
\end{align}
on $I$. Thus, if we take $f_2$ to be defined by (\ref{InhomogEqForMathCalLEquivSys4}) then $f,g$ satisfy (\ref{InhomogEqForMathCalLEquivSys1})--(\ref{DefFfunction}), where $f,g$ and $f_1,f_2, g_1, g_2$ are related by (\ref{InhomogEqForMathCalLEquivSys5}). It follows from this and part (i) of this proposition, that $f\in D(\mathcal{L}),$ $\mathcal{L}f=g$, and we have
\begin{align}
(Jf)(t_0)=(JVV^{-1}f)(t_0)=V\begin{bmatrix}
J_{11}f_1(t_0)\\ 0
\end{bmatrix}=V\begin{bmatrix}
(f_1)_0\\ 0
\end{bmatrix}=f_0.
\end{align}
The uniqueness portion of statement (ii) in this case follows immediately by the uniqueness of the solution to the ODE IVP (\ref{ReducedODEIVPOnI}) on $I$. This proves statement (ii) in the case $\det J=0$ with the exception that we still need to prove $f\in L^2_{loc}(I;W)$ if (\ref{HypIndex1LocalPart0})--(\ref{HypIndex1LocalPart1Pt5}). We do this next.

Suppose, in addition, that (\ref{HypIndex1LocalPart0})--(\ref{HypIndex1LocalPart1Pt5}) are true. To prove $f\in L^2_{loc}(I;W)$ it suffices by Lemma \ref{LemCharL2WSpace} to prove $f_1,f_2$ satisfy (\ref{fInL2WEquivCond2}) in the ``loc" case, i.e., when in (\ref{fInL2WEquivCond2}) we replace $[L^2(I)]^{n_1}, [L^2(I)]^{n_1}$ with $[L^2_{loc}(I)]^{n_1}, [L^2_{loc}(I)]^{n_1}$, respectively. Let $[a,b]\subseteq I$ be a compact interval. First, by the hypothesis (\ref{HypIndex1LocalPart1Pt5}) we have $W/W_{22}\in M_{n_1}(L^1([a,b]))$, and so by Lemma \ref{LemWSchurComplProperties} and then Lemma \ref{LemWInvWSqrtProperties} applied to $W/W_{22}$ (instead of $W$) it follows that $(W/W_{22})^{1/2}\in M_{n_1}(L^2([a,b]))$. Hence, since $f_1\in [W^{1,1}([a,b])]^{n_1}\subseteq [L^{\infty}([a,b])]^{n_1}$, this implies by Holder's inequality  that $(W/W_{22})^{1/2}f_1\in [L^2([a,b])]^{n_1}$. Also, as $f_1\in [L^{\infty}(I)]^{n_1}$ and by hypothesis (\ref{HypIndex1LocalPart1}), it follows by Holder's inequality  that
\begin{align}
    W_{22}^{1/2}(W_{22}^{-1}W_{21}-H_{22}^{-1}H_{21})f_1\in (L^2([a,b]))^{n_2}.
\end{align}
Next, as $g\in L^2([a,b];W)$, it follows by Lemma \ref{LemCharL2WSpace} that
\begin{align}
    W_{22}^{-1/2}(W_{21}g_1+W_{22}g_2)\in (L^2([a,b]))^{n_2}.
\end{align}
Hence, from this and hypothesis (\ref{HypIndex1LocalPart0}), it follows by Holder's inequality  that
\begin{align}
    W_{22}^{1/2}H_{22}^{-1}W_{22}^{1/2}[W_{22}^{-1/2}(W_{21}g_1+W_{22}g_2)]\in (L^2([a,b]))^{n_2}.
\end{align}
Finally, it follows from these facts, the formula (\ref{InhomogEqForMathCalLEquivSys4}) for $f_2$, and Minkowski's inequality that
\begin{gather}
    W_{22}^{-1/2}(W_{21}f_1+W_{22}f_2)\\
    =W_{22}^{-1/2}(W_{21}f_1+W_{22}(H_{22}^{-1}(W_{21}g_1+W_{22}g_2)-H_{22}^{-1}H_{21}f_1)))\\
    =W_{22}^{-1/2}(W_{21}f_1-W_{22}H_{22}^{-1}H_{21}f_1)+W_{22}^{1/2}H_{22}^{-1}(W_{21}g_1+W_{22}g_2)\\
    =W_{22}^{1/2}(W_{22}^{-1}W_{21}-H_{22}^{-1}H_{21})f_1\\
    +W_{22}^{1/2}H_{22}^{-1}W_{22}^{1/2}[W_{22}^{-1/2}(W_{21}g_1+W_{22}g_2)]\in (L^2([a,b]))^{n_2}.
\end{gather}
This proves that $f\in L^2([a,b];W)$. As $[a,b]$ was an arbitrary compact interval in $I$, this proves $f\in L^2_{loc}(I;W)$ which completes the proof of statement (ii) and hence proves the proposition.
\end{proof}

\begin{corollary}\label{CorKeyRanKerRelForIndex1HypL}
If the index-$1$ hypotheses (\ref{HypIndex1LocalPartNeg1}) or (\ref{HypIndex1LocalPartNeg0Pt5})--(\ref{HypIndex1LocalPart1Pt5}) are true on a compact interval $I$ then
\begin{align}
    \operatorname{ran}L=L^2(I;W),\;\;\ker L=\ker \mathcal{L},\;\;\dim \ker L = \operatorname{rank} J<\infty.
\end{align}
\end{corollary}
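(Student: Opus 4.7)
The plan is to deduce all three conclusions as direct consequences of Proposition~\ref{PropKeySolvabilityLocalIndex1DAEs}(ii), exploiting that for a compact interval $I$ the ``loc'' and ``global'' variants of $L^p$, $W^{1,p}$, and $L^2(\,\cdot\,;W)$ all coincide. In particular, the index-$1$ hypotheses agree with the local index-$1$ hypotheses here, and the conclusion $f\in L^2_{loc}(I;W)$ of the proposition may be read as $f\in L^2(I;W)$.

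To establish $\operatorname{ran} L = L^2(I;W)$, I would fix an arbitrary $t_0\in I$ and, for a given $g\in L^2(I;W)$, invoke Proposition~\ref{PropKeySolvabilityLocalIndex1DAEs}(ii) with $f_0 = 0\in\operatorname{ran} J$ to produce $f\in D(\mathcal{L})\cap L^2(I;W)$ satisfying $\mathcal{L}f = g$. Since $\mathcal{L}f = g\in L^2(I;W)$ as well, this $f$ lies in $D(L)$ with $Lf = g$, giving surjectivity.

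For $\ker L = \ker \mathcal{L}$, one inclusion is immediate from $L\subset\mathcal{L}$. For the reverse, given $f\in\ker\mathcal{L}$, I note that $Jf$ has a continuous representative in $[W^{1,1}(I)]^n$, so $f_0 := (Jf)(t_0)\in\operatorname{ran} J$ is well defined, and $f$ solves the homogeneous IVP $\mathcal{L}h = 0$, $(Jh)(t_0) = f_0$. By the uniqueness clause of Proposition~\ref{PropKeySolvabilityLocalIndex1DAEs}(ii), $f$ coincides with the unique solution that the same proposition places in $L^2(I;W)$; hence $f\in D(L)$ and $Lf = 0$.

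The dimension count is then furnished by the evaluation map $\Phi\colon \ker L \to \operatorname{ran} J$, $\Phi(f) = (Jf)(t_0)$, which is linear, injective by the uniqueness and surjective by the existence asserted in Proposition~\ref{PropKeySolvabilityLocalIndex1DAEs}(ii) applied with $g=0$. This yields $\dim\ker L = \dim\operatorname{ran} J = \operatorname{rank} J < \infty$. I do not anticipate a substantive obstacle; the only point requiring a little care is the bookkeeping confirming that the ``loc'' and ``global'' versions of the function spaces collapse on a compact $I$, so that both the hypotheses and the conclusions of Proposition~\ref{PropKeySolvabilityLocalIndex1DAEs} may be used without their ``loc'' subscripts.
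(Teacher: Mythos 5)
Your proposal is correct and follows essentially the same route as the paper: both arguments rest entirely on the existence–uniqueness statement of Proposition \ref{PropKeySolvabilityLocalIndex1DAEs}(ii) together with the collapse of the ``loc'' spaces on a compact interval, giving surjectivity of $L$, the identification $\ker L=\ker\mathcal{L}$, and the dimension count. The only cosmetic difference is that you phrase the dimension count via the evaluation map $f\mapsto (Jf)(t_0)$ being a linear bijection onto $\operatorname{ran}J$, whereas the paper constructs an explicit basis of $\ker L$ from a basis of $\operatorname{ran}J$; these are the same argument.
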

\begin{proof}
Let $g\in L^2(I;W)$, where $I$ is a compact interval. Fix any $t_0\in I$ and $f_0\in\operatorname{ran}J$. Then by Proposition \ref{PropKeySolvabilityLocalIndex1DAEs} there exists a unique $f\in D(\mathcal{L})$ such that $\mathcal{L}f=g$ with $(Jf)(t_0)=f_0$. Moreover, from this proposition we also know that $f\in L^2_{loc}(I;W)=L^2(I;W)$. It follows from these facts that $f\in D(L)$ and $Lf=\mathcal{L}f=g$ which implies $f\in \operatorname{ran}L.$ This proves that $L^2(I;W)\subseteq\operatorname{ran}L$ and since $\operatorname{ran}L\subseteq L^2(I;W)$ (by definition of $L$), we conclude that $\operatorname{ran}L= L^2(I;W)$. Next, as we can take $g=0$ in this proof, it follows immediately that $\ker L=\ker \mathcal{L}$. It remains to prove $\dim \ker L = \operatorname{rank} J<\infty$. First, as $J\in \mathbb{C}^n, J\not =0$, then $1\leq r:=\operatorname{rank} J=\dim \operatorname{ran} J\leq n<\infty$. Next, let $\beta_1,\ldots, \beta_r$ be a basis for $\operatorname{ran} J$. Then in our proof with $g=0$ and $t_0\in I$ fixed, there exists a unique solution $f_j\in D(L)$ to $Lf_j=0$ with $(Jf_j)(t_0)=\beta_j$, for each $j=1,\ldots, r$. We claim that $f_1,\ldots, f_r$ is a basis for $\ker L$. Obviously, $f_1,\ldots, f_r\in \ker L$. Next, let $c_1,\ldots, c_r\in \mathbb{C}$ be such that
\begin{align}
    c_1f_1+\cdots + c_rf_r=0.
\end{align}
Then
\begin{align}
    0=(J0)(t_0)=[J(c_1f_1+\cdots+c_rf_r)](t_0) = c_1\beta_1\cdots+c_r\beta_r
\end{align}
implying $c_1=\cdots = c_r$. This proves the vectors $f_1,\ldots, f_r$ are linearly independent. Finally, let $h\in \ker L$. Then there exists scalars $c_1,\ldots, c_r\in \mathbb{C}$ such that
\begin{align}
    (Jh)(t_0)=c_1\beta_1\cdots+c_r\beta_r=[J(c_1f_1+\cdots+c_rf_r)](t_0).
\end{align}
By the uniqueness of the solution $f\in D(L)$ to $Lf=0, (Jf)(t_0)=(Jh)(t_0)$, it follows that $h=c_1f_1+\cdots+c_rf_r$. This proves that the vectors $f_1,\ldots, f_r$ span $\ker L$. Therefore, the vectors $f_1,\ldots, f_r$ are a basis for $\ker L$ which proves our claim and also proves that  $\dim \ker L =r= \operatorname{rank} J<\infty$.
\end{proof}

\begin{definition}\label{DefL0InFiniteInterval}
In the case that $I$ is a bounded interval with endpoints $a, b$ and $a<b$, the \textit{closed minimal operator} $L_0:D(L_0)\rightarrow L^2(I;W)$ \textit{generated by $\mathcal{L}$} is defined by
\begin{align}
    D(L_0)&=\{f\in D(L):\lim_{t_0\rightarrow a^+}(Jf)(t_0)=\lim_{t_1\rightarrow b^-}(Jf)(t_1)=0\},\label{DefL0InFiniteInterval1}\\
    L_0f&=\mathcal{L}f,\text{ for }f\in D(L_0).\label{DefL0InFiniteInterval2}
\end{align}
\end{definition}

\begin{lemma}
Suppose $I$ is a bounded interval. Then $D(L_0)$ is a subspace of the Hilbert space $L^2(I;W)$. Moreover, $L_0:D(L_0)\rightarrow L^2(I;W)$ is a linear operator with
\begin{gather}
    D(L_0')\subseteq D(L_0)\subseteq D(L)\subseteq D(\mathcal{L})\cap L^2(I;W),\\
    L_0f=Lf=\mathcal{L}f,\;\;f\in D(L_0),\\
    L_0'u=L_0u,\;\;u\in D(L_0').
\end{gather}
\end{lemma}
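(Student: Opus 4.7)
The plan is to derive everything as bookkeeping from the definitions, linearity, and the previously established Lemma \ref{LemDomainsAreSubspacesForL0primeAndLWhichAreWellDefLinearOps}; no substantive analysis is needed beyond the integral representation (\ref{IntegralRepresW1pFunctions}) for $W^{1,1}_{loc}$ functions.

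First I would check that $D(L_0)$ is a subspace of $L^2(I;W)$. Since $D(L)$ already is one, it suffices to note that the map $f \mapsto Jf$ is linear and that one-sided limits respect linear combinations when they exist; hence for $f, g \in D(L_0)$ and $\alpha, \beta \in \mathbb{C}$, $\lim_{t_0 \to a^+} J(\alpha f + \beta g)(t_0) = \alpha \cdot 0 + \beta \cdot 0 = 0$, and likewise at $b^-$, so $\alpha f + \beta g \in D(L_0)$. Linearity of $L_0: D(L_0) \to L^2(I;W)$ is then immediate from the fact that $L_0 = \mathcal{L}|_{D(L_0)}$ and $\mathcal{L}$ is a linear operator on $D(\mathcal{L})$.

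Next I would establish the chain of inclusions. The inclusions $D(L_0) \subseteq D(L)$ and $D(L) \subseteq D(\mathcal{L}) \cap L^2(I;W)$ are immediate from Definitions \ref{DefMinMaxOpGenerByTheDAEsOp} and \ref{DefL0InFiniteInterval}. The only inclusion with any content is $D(L_0') \subseteq D(L_0)$. If $u \in D(L_0')$, then $Ju$ has compact support $K$ contained in the interior $(a,b)$ of $I$. Using the convention from Section \ref{sec:Preliminaries} that identifies a $W^{1,1}_{loc}(I)$ function with its continuous (indeed absolutely continuous) representative via (\ref{IntegralRepresW1pFunctions}), we have $(Ju)(t) = 0$ for every $t \in I \setminus K$; in particular $Ju$ is identically zero on a neighborhood (in $I$) of each endpoint, so both one-sided limits $\lim_{t_0 \to a^+}(Ju)(t_0)$ and $\lim_{t_1 \to b^-}(Ju)(t_1)$ exist and equal $0$. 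Hence $u \in D(L_0)$.

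Finally, the operator equalities $L_0 f = L f = \mathcal{L} f$ on $D(L_0)$ and $L_0' u = L_0 u$ on $D(L_0')$ are trivial, since each of $L_0'$, $L_0$, and $L$ is by definition the restriction of $\mathcal{L}$ to its respective domain. I do not foresee any real obstacle: the only point that needs care is in the inclusion $D(L_0') \subseteq D(L_0)$, where one must remember to work with the continuous representative of $Ju$ in order to make pointwise sense of the endpoint limits. Once that convention is invoked, the entire statement is essentially a bookkeeping check.
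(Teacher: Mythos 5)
Your proof is correct, and it is exactly the routine verification the paper has in mind: the paper simply states that the proof is straightforward and omits it. Your write-up supplies the details faithfully, including the one point that genuinely needs care, namely using the absolutely continuous representative of $Ju$ from (\ref{IntegralRepresW1pFunctions}) so that the endpoint limits in Definition \ref{DefL0InFiniteInterval} make sense and vanish when $Ju$ has compact support in the interior of $I$.
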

\begin{proof}
The proof is straightforward and so will be omitted.
\end{proof}

\begin{notation}
When we need to be explicit about the dependence of $L_0$ on $H$ and/or $J$ we will use the subscript $(\cdot)_H$ or $(\cdot)_{J,H}$ with these operators, e.g., $(L_0)_{H}$ and for the latter, we will just write $L_{H,0}$ instead. Similarly, $(L_0)_{J,H}$ will be written instead as $L_{J,H,0}$.
\end{notation}

The importance of the next two theorems becomes more clear by comparing it to Theorem \ref{thm:prem:FundResultAdjointsKerRanDecomp} with $A=L_0, B=L_{H^*}$ as we use these results to prove Theorem \ref{ThmRegularIndex1MinMaxOpsL0LMainThm} below.
\begin{theorem}\label{ThmAdjointCalcForClosedMinAndMaxOpsRelatedToSymmetricOps}
Let $I$ is a bounded interval. Then the following are true:\\
\noindent a) For $f\in D(L_0)$ and $g\in D(L_{H^*})$ we have
  \begin{align}
      \langle L_0f,g\rangle_W = \langle f,L_{H^*}g\rangle_W.
  \end{align}
  b) For $f\in D(L_0)$ and $g\in D(L_{H^*,0})$ we have
  \begin{align}
       \langle L_0f,g\rangle_W = \langle f,L_{H^*,0}g\rangle_W.
  \end{align}
\end{theorem}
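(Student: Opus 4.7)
The approach is to adapt the integration-by-parts computation from the proof of Theorem \ref{ThmAdjointCalcForMinMaxOpsRelatedToSymmetricOps}, carried out on a compact sub-interval $[\alpha,\beta]\subseteq I$ and then passed to the limit $\alpha\to a^+$, $\beta\to b^-$. On any such $[\alpha,\beta]$, the restrictions of $Jf$ and $J^+Jg$ lie in $AC([\alpha,\beta])$, so the identical chain of steps produces
\[
\int_\alpha^\beta\langle W\mathcal{L}f,g\rangle\,dt-\int_\alpha^\beta\langle f,W\mathcal{L}_{H^*}g\rangle\,dt=\langle (Jf)(\beta),J^+(Jg)(\beta)\rangle-\langle (Jf)(\alpha),J^+(Jg)(\alpha)\rangle.
\]
Since $f,g,\mathcal{L}f,\mathcal{L}_{H^*}g\in L^2(I;W)$, the Cauchy--Schwarz inequality in the $W$-inner product shows that both integrands on the left lie in $L^1(I)$, and dominated convergence sends the left-hand side to $\langle L_0f,g\rangle_W-\langle f,L_{H^*}g\rangle_W$. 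The proof thus reduces to showing that the two boundary terms vanish in the limit, using only the defining property $(Jf)(\alpha),(Jf)(\beta)\to 0$ of $D(L_0)$.

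Part (b) is essentially immediate. When $g\in D(L_{H^*,0})$, the endpoint condition gives $(Jg)(\alpha),(Jg)(\beta)\to 0$ as well, so the pointwise bound
\[
|\langle (Jf)(t),J^+(Jg)(t)\rangle|\le\|J^+\|\,\|(Jf)(t)\|\,\|(Jg)(t)\|
\]
forces both boundary terms to zero. (Equivalently, once part (a) is established, part (b) follows from $D(L_{H^*,0})\subseteq D(L_{H^*})$ and $L_{H^*,0}g=L_{H^*}g$, exactly mirroring the analogous step in Theorem \ref{ThmAdjointCalcForMinMaxOpsRelatedToSymmetricOps}.)

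Part (a) is the substantive case, because $J^+(Jg)(t)$ is not a priori bounded near the endpoints, so Cauchy--Schwarz alone is inconclusive. The plan is to establish (a) by a density argument: construct a sequence $\{f_n\}\subseteq D(L_0')$ with $f_n\to f$ and $L_0f_n\to L_0f$ in $L^2(I;W)$, apply Theorem \ref{ThmAdjointCalcForMinMaxOpsRelatedToSymmetricOps}(a) to each $f_n$, and pass to the limit using continuity of the $W$-inner product. A natural candidate is $f_n=\phi_n f$ with $\phi_n$ a smooth cutoff equal to $1$ on $[a+2/n,b-2/n]$ and supported in $[a+1/n,b-1/n]$; then $Jf_n=\phi_n\,Jf$ has compact support in the interior of $I$, placing $f_n\in D(L_0')$, and the identity
\[
\mathcal{L}(\phi_n f)=W^{-1}\phi_n'\,Jf+\phi_n\mathcal{L}f
\]
reduces the required convergence to showing $W^{-1}\phi_n'\,Jf\to 0$ in $L^2(I;W)$.

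The principal technical obstacle is this last convergence, because $\phi_n'$ has sup-norm of order $n$ while one knows only $(Jf)(t)\to 0$ pointwise at the endpoints with no a priori decay rate. My plan is to pass to the block decomposition $V^{-1}f=\begin{bmatrix}f_1\\f_2\end{bmatrix}$ from Section \ref{sec:1DPCs}, in which $Jf=V\begin{bmatrix}J_{11}f_1\\0\end{bmatrix}$ with $f_1\in W^{1,1}_{loc}(I)$ continuous on $I$ and vanishing at the endpoints, and then to adapt the width and slope of $\phi_n$ to the pointwise decay of $f_1$ so that
\[
\int_I(\phi_n')^2\langle W^{-1}Jf,Jf\rangle\,dt\to 0.
\]
Arranging this adaptive cutoff (and verifying that $W^{-1}Jf$ pairs with itself integrably on the support of $\phi_n'$) is where I expect the real work of the proof to lie.
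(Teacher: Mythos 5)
Your reduction to the boundary terms on $[\alpha,\beta]$, the dominated-convergence step, and part (b) are all fine, and the derivation of (b) from (a) matches the paper's pattern. But note first that the case you call substantive is essentially trivial in the situation the paper actually needs: if $I$ is \emph{compact}, then by the identification $W^{1,1}_{loc}(I)=W^{1,1}(I)=AC(I)$ the condition $Jg\in[W^{1,1}_{loc}(I)]^n$ in $D(L_{H^*})$ already makes $J^+Jg$ continuous and bounded up to the endpoints, while $f\in D(L_0)$ gives $(Jf)(a)=(Jf)(b)=0$; the computation of Theorem \ref{ThmAdjointCalcForMinMaxOpsRelatedToSymmetricOps} then applies verbatim and both parts follow with no density argument. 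This is exactly the ``similar proof'' the paper omits, and it is the only case used downstream, since Theorems \ref{ThmKerRanRelBetweenClosedMinAndMaxOps} and \ref{ThmRegularIndex1MinMaxOpsL0LMainThm} explicitly reduce to compact intervals.

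For a bounded non-closed interval your concern about $J^+Jg$ being unbounded at an endpoint is legitimate, but the cutoff/density argument you propose does not close part (a), and the hole sits precisely where you yourself place ``the real work.'' First, $f_n=\phi_nf$ need not belong to $D(L_0')$ at all: that membership requires $W^{-1}\phi_n'\,Jf\in L^2(I;W)$, i.e.\ $\int_I(\phi_n')^2\langle W^{-1}Jf,Jf\rangle\,dt<\infty$, and under (\ref{HypForWOnab}) $W^{-1}$ is merely measurable, so $\langle W^{-1}Jf,Jf\rangle$ need not be locally integrable; the memberships $f,\mathcal{L}f\in L^2(I;W)$ control $W^{1/2}f$ and $W^{-1/2}[(Jf)'+Hf]$, not $W^{-1/2}Jf$. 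Second, even granting that integrability, the crucial convergence $\int_I(\phi_n')^2\langle W^{-1}Jf,Jf\rangle\,dt\to 0$ is asserted, not proved: the only available information is the qualitative limit $(Jf)(t)\to 0$ with no rate, while the weight $\langle W^{-1}\cdot,\cdot\rangle$ may blow up arbitrarily fast at the endpoint, so ``adapting the width and slope of $\phi_n$'' is a restatement of the problem rather than a solution. As written, part (a) is therefore incomplete; either restrict to compact $I$ (where the paper's direct limiting argument suffices and which covers all later applications), or supply a genuine argument that the boundary limit---which your dominated-convergence step shows exists---is actually zero, which requires quantitative input (e.g.\ the index-$1$ hypotheses, under which $Jf$ and $Jg$ extend absolutely continuously to the closure) that your outline does not provide.
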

\begin{proof}
The proof is similar to the proof of Theorem \ref{ThmAdjointCalcForMinMaxOpsRelatedToSymmetricOps} and so it is omitted.
\end{proof}

\begin{theorem}\label{ThmKerRanRelBetweenClosedMinAndMaxOps}
Suppose $I$ is a bounded interval and the index-$1$ hypotheses (see Def. \ref{DefIndex1Hyp}) are true for $H, W$ with respect to $J$ on the interval $I$. Then
\begin{align}
    \operatorname{ran}L_0&=(\ker L_{H^*})^{\perp},\;\;(\operatorname{ran}L_0)^{\perp}=\ker L_{H^*},\label{RanL0PerpKerLH*Relationship}\\
    \ker L_0 &= \{0\},\;\;\operatorname{ran}L_{H^*}=(\ker L_0)^{\perp} = L^2(I;W),\label{KerL0Is0}\\
    \ker L_{H^*}&+\operatorname{ran}L_0=\operatorname{ran}L_{H^*}+\ker L_0 = L^2(I;W).\label{RanAndKerSumsOfL0AndH*IsAllL2}
\end{align}
In particular, $\operatorname{ran}L_0$ and $\ker L_{H^*}$ are closed subspaces of $L^2(I;W)$, which are orthogonal to each other.
\end{theorem}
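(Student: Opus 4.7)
The plan is to assemble three ingredients---surjectivity of $L$ and $L_{H^*}$ together with finite-dimensionality of their kernels, injectivity of $L_0$, and the identity $\operatorname{ran} L_0 = (\ker L_{H^*})^\perp$---after which the remaining identities fall out by orthogonal complements and finite codimension. The first ingredient is supplied by Corollary \ref{CorKeyRanKerRelForIndex1HypL} applied to $L$ and, via Corollary \ref{cor:SimplifiedIndex1HypImplyItForAdjoints}, to $L_{H^*}$: both have $\operatorname{ran} = L^2(I;W)$ and $\dim \ker = r := \operatorname{rank} J$.

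For $\ker L_0 = \{0\}$, I would take $f \in \ker L_0 \subseteq \ker L$, write $V^{-1}f = (f_1, f_2)^T$, and invoke Proposition \ref{PropKeySolvabilityLocalIndex1DAEs}(i) to reduce to the homogeneous ODE $J_{11}\,df_1/dt + (H/H_{22})\,f_1 = 0$. Because $I$ is bounded and the (non-local) index-$1$ hypotheses give $J_{11}^{-1}(H/H_{22}) \in M_{n_1}(L^1(I))$, Theorem \ref{ThmODEIVPExistenceUniquenessSolns} places $f_1 \in W^{1,1}(I)^{n_1}$, hence continuous up to the closure $[a,b]$. The boundary condition $\lim_{t \to a^+}(Jf)(t) = 0$ then forces $J_{11} f_1(a) = 0$, whence $f_1(a) = 0$ by invertibility of $J_{11}$; ODE uniqueness gives $f_1 \equiv 0$, and the algebraic relation in Proposition \ref{PropKeySolvabilityLocalIndex1DAEs}(i) forces $f_2 \equiv 0$.

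The central step is $\operatorname{ran} L_0 = (\ker L_{H^*})^\perp$. The inclusion $\subseteq$ is Theorem \ref{ThmAdjointCalcForClosedMinAndMaxOpsRelatedToSymmetricOps}(a). For $\supseteq$, given $g \perp \ker L_{H^*}$ I would pick $f_1 \in D(L)$ with $Lf_1 = g$ (using $\operatorname{ran} L = L^2(I;W)$) and correct $f_1$ by an element of $\ker L$ so that the boundary values vanish. Repeating the integration-by-parts computation from the proof of Theorem \ref{ThmAdjointCalcForMinMaxOpsRelatedToSymmetricOps} without the compact-support assumption---legal because the non-local index-$1$ hypotheses extend $Jf_1$ and $Jk$ continuously to $[a,b]$ for all $f_1 \in D(L), k \in D(L_{H^*})$---yields, for every $k \in \ker L_{H^*}$,
$$\langle g, k\rangle_W \;=\; \bigl\langle (Jf_1)(b),\, J^+(Jk)(b)\bigr\rangle \;-\; \bigl\langle (Jf_1)(a),\, J^+(Jk)(a)\bigr\rangle.$$
Define trace maps $\Theta : \ker L \to \operatorname{ran} J \oplus \operatorname{ran} J$ and $\Theta' : \ker L_{H^*} \to \operatorname{ran} J \oplus \operatorname{ran} J$ by $h \mapsto ((Jh)(a), (Jh)(b))$; both are injective by the same uniqueness argument as above, so $\dim \operatorname{ran}\Theta = \dim \operatorname{ran}\Theta' = r$. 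On $\operatorname{ran} J \oplus \operatorname{ran} J$ the skew-Hermitian pairing $B((\alpha,\beta),(\gamma,\delta)) := \langle \beta, J^+ \delta\rangle - \langle \alpha, J^+ \gamma\rangle$ is non-degenerate because $J^+$ restricts to a bijection of $\operatorname{ran} J$ (a consequence of $J^* = -J$ and the Moore-Penrose identities). The displayed identity applied to $h \in \ker L, k \in \ker L_{H^*}$ shows $B$ vanishes on $\operatorname{ran}\Theta \times \operatorname{ran}\Theta'$; the dimension count $r + r = 2r$ then forces $\operatorname{ran}\Theta = (\operatorname{ran}\Theta')^{B\perp}$. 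The hypothesis $g \perp \ker L_{H^*}$ now reads exactly $((Jf_1)(a), (Jf_1)(b)) \in \operatorname{ran}\Theta$, so some $h \in \ker L$ matches the boundary data, and $f := f_1 - h \in D(L_0)$ satisfies $L_0 f = g$.

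The remaining identities are quick corollaries: taking $\perp$ of $\operatorname{ran} L_0 = (\ker L_{H^*})^\perp$ and using that the finite-dimensional $\ker L_{H^*}$ is closed gives $(\operatorname{ran} L_0)^\perp = \ker L_{H^*}$; from $\ker L_0 = \{0\}$ and $\operatorname{ran} L_{H^*} = L^2(I;W)$ one reads $(\ker L_0)^\perp = L^2(I;W) = \operatorname{ran} L_{H^*}$; and the two sum decompositions follow from the finite codimension of $\operatorname{ran} L_0$ and the triviality of $\ker L_0$ respectively. The main obstacle I anticipate is the symplectic-style linear-algebra portion of the central step---verifying non-degeneracy of $B$ and precisely aligning the orthogonality condition on $g$ with $\operatorname{ran}\Theta$-membership of the boundary data---together with the ancillary but nontrivial continuity of $Jf, Jk$ up to $[a,b]$ for arbitrary $f \in D(L), k \in D(L_{H^*})$ needed to justify the boundary-term formula.
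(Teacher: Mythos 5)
Your proposal is correct, and its skeleton matches the paper's: everything is reduced to the three facts that $L$ and $L_{H^*}$ are surjective with $r=\operatorname{rank}J$-dimensional kernels (Corollary \ref{CorKeyRanKerRelForIndex1HypL} plus Corollary \ref{cor:SimplifiedIndex1HypImplyItForAdjoints}), that $\ker L_0=\{0\}$, and that $\operatorname{ran}L_0=(\ker L_{H^*})^{\perp}$, after which the remaining identities follow by taking orthogonal complements. Where you genuinely diverge is in the inclusion $(\ker L_{H^*})^{\perp}\subseteq\operatorname{ran}L_0$: the paper normalizes the preimage from the start, solving $\mathcal{L}u=f$ with $(Ju)(a)=0$ via Proposition \ref{PropKeySolvabilityLocalIndex1DAEs}.(ii), and chooses a special basis $\{z_i\}$ of $\ker L_{H^*}$ with $(Jz_i)(b)=JVe_i$, so that the boundary term at $a$ drops out and orthogonality to $\ker L_{H^*}$ reads directly as $(J_{11}u_1)(b)=0$, i.e.\ $(Ju)(b)=0$; no symplectic linear algebra is needed and the computation is explicit in the block coordinates. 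You instead take an arbitrary preimage and correct it by an element of $\ker L$, proving that the trace spaces $\operatorname{ran}\Theta$ and $\operatorname{ran}\Theta'$ are exact annihilators of each other under the nondegenerate boundary pairing $B$ on $\operatorname{ran}J\oplus\operatorname{ran}J$ (your nondegeneracy argument via $J^+$ restricting to a bijection of $\operatorname{ran}J$ is sound, as is the $r+r=2r$ dimension count). This is basis-free and isolates a reusable structural fact about the boundary form, at the price of the extra linear algebra; note that the boundary-term identity for arbitrary $u\in D(L)$ and $k\in\ker L_{H^*}$, which you flag as the ancillary obstacle, is exactly what the paper also uses (on a compact interval $Ju, Jk\in [W^{1,1}]^n=[AC]^n$, so the fundamental theorem of calculus applies), so you are on the same footing there. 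Two minor points: your direct ODE argument for $\ker L_0=\{0\}$ is written for $\det J=0$ and should be accompanied by the (easier) case $\det J\neq 0$, whereas the paper gets injectivity at once from IVP uniqueness after passing to the closed interval $\overline{I}$; and like the paper you should reduce to $\overline{I}$ first so the endpoint limits in the definition of $D(L_0)$ become honest boundary values.
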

\begin{proof}
Assume the hypotheses. Then by Corollary \ref{cor:SimplifiedIndex1HypImplyItForAdjoints}, the index-$1$ hypotheses are also true for $H^*, W$ with respect to $J$ on the interval $I$. First, we may just assume that $I$ is a compact interval, for if it is not then we can just take its closure instead which is now compact and the result for $I$ will then follow from the statement for that compact interval. Second, (\ref{RanAndKerSumsOfL0AndH*IsAllL2}) follows immediately from (\ref{RanL0PerpKerLH*Relationship}) and (\ref{KerL0Is0}). Third, it follows immediately from Corollary \ref{CorKeyRanKerRelForIndex1HypL} (applied to $L_{H^*}$) that $\operatorname{ran}L_{H^*}=L^2(I;W)$ and by Corollary \ref{CorKeyRanKerRelForIndex1HypL} (applied to $L$) and Proposition \ref{PropKeySolvabilityLocalIndex1DAEs}.(ii) that $\ker L_0 = \{0\}$, which proves the equalities in (\ref{KerL0Is0}). Next, the second identity of (\ref{RanL0PerpKerLH*Relationship}) follows from the first identity since $\ker L_{H^*}$ is finite-dimensional by Corollary \ref{CorKeyRanKerRelForIndex1HypL} (applied to $L_{H^*}$) and hence $(\ker L_{H^*})^{{\perp}{\perp}}=\overline {\ker L_{H^*}}=\ker L_{H^*}$. Thus, it remains to prove that $\operatorname{ran}L_0=(\ker L_{H^*})^{\perp}$ when $I$ is a compact interval, which we do so now.

Let $f\in L^2(I;W)$, where $I=[a,b]$ is a compact interval. Then, by Proposition \ref{PropKeySolvabilityLocalIndex1DAEs}.(ii) and the hypotheses there is a unique solution $u\in D(\mathcal{L})$ to the IVP $\mathcal{L}u=f,(Ju)(a)=0$, and, in addition, $u\in L^2_{loc}(I;W)=L^2(I;W)$ so that $u\in D(L)$ with $Lu=\mathcal{L}u=f$.  By applying Corollary \ref{CorKeyRanKerRelForIndex1HypL} to $L_{H^*}$ we know that $\ker \mathcal{L}_{H^*}=\ker L_{H^*}$ and $\infty>\dim \ker L_{H^*}=\operatorname{rank}J=:r>0$. Furthermore, using a similar proof as in Corollary \ref{CorKeyRanKerRelForIndex1HypL}, we get a basis $\{z_i:1\leq i \leq r\}$ of $\ker L_{H^*}$ satisfying $(Jz_i)(b)=JVe_i, i=1,\ldots, r$, where $e_1,\ldots, e_n$ denote the standard basis vectors for $\mathbb{C}^{n}$. Then it follows that
\begin{gather}
    \langle f,z_i \rangle_W = \langle Lu,z_i \rangle_W=\langle \mathcal{L}u,z_i \rangle_W-\langle u,\mathcal{L}_{H^*}z_i \rangle_W\\
    = \langle (Ju)(b),J^+(Jz_i)(b) \rangle-\langle (Ju)(a),J^+(Jz_i)(a) \rangle  = \langle (Ju)(b),J^+(Jz_i)(b)\rangle \\
    = \langle (Ju)(b),J^+JVe_i\rangle= \langle JJ^+(Ju)(b),Ve_i\rangle = \langle (JJ^+Ju)(b),Ve_i\rangle\\
    = \langle (Ju)(b),Ve_i\rangle= \langle V^*(Ju)(b),e_i\rangle = \langle V^{-1}(Ju)(b),e_i\rangle\\
    = \langle (V^{-1}JVV^{-1}u)(b),e_i\rangle= (V^{-1}JVV^{-1}u)(b)_i = (J_{11}u_1)(b)_i,
\end{gather}
for each $i=1,\ldots, r$, where $V=I_n, J_{11}=J$ if $\det J\not =0$, otherwise,
\begin{align}
    V^{-1}u=\begin{bmatrix}
    u_1\\
    u_2
    \end{bmatrix},\;\;V^{-1}JVV^{-1}u=\begin{bmatrix}
    J_{11}u_1\\
    0
    \end{bmatrix}.
\end{align}
Suppose that $f\in \operatorname{ran} L_0$. Then, for the above we must have $u\in D(L_0),$ $L_0u=Lu=f,$ and $(Ju)(a)=(Ju)(b)=0$ so that by the above calculation it follows that $\langle f,z_i \rangle =0$ for all $i=1,\ldots, r$. As $z_1,\ldots, z_{r}$ is a basis of solutions to $\ker L_{H^*}$ then this implies that $\langle f,z \rangle_W =0$ for all $z\in \ker L_{H^*}$ and hence $f\in (\ker L_{H^*})^{\perp}$. This proves that $\operatorname{ran} L_0\subseteq (\ker L_{H^*})^{\perp}$. Conversely, suppose $f\in (\ker L_{H^*})^{\perp}$. Then $\langle f,z_i \rangle_W =0$ for every $z\in \ker L_{H^*}$. By the above calculation for this $f$ and the corresponding $u$, we have $0=\langle f,z_i \rangle=(J_{11}u_1)(b)_i$ for each $i=1,\ldots, r$ implying $(J_{11}u_1)(b)=0$. As this implies $(Ju)(b)=0$ then it follows that $u\in D(L_0)$ [as $u\in D(L)$ with $(Ju)(a)=(Ju)(b)=0$] and so $L_0u=Lu=f$ which proves $f\in \operatorname{ran} L_0$. This proves $(\ker L_{H^*})^{\perp}\subseteq\operatorname{ran} L_0$. Therefore, $\operatorname{ran} L_0=(\ker L_{H^*})^{\perp}$, which completes the proof.
\end{proof}

\begin{theorem}\label{ThmRegularIndex1MinMaxOpsL0LMainThm}
Suppose $I$ is a bounded interval and the index-$1$ hypotheses (see Def. \ref{DefIndex1Hyp}) are true for $H, W$ with respect to $J$ on the interval $I$. Then the following statements hold:\\
  \noindent a) The subspaces $D(L_0)$, $D(L_{H^*,0})$, $D(L)$, and $D(L_{H^*})$ are all dense in $L^2(I;W)$.\\
  \noindent b) The operators $L_0:D(L_0)\rightarrow L^2(I;W)$ and $L_{H^*}:D(L_{H^*})\rightarrow L^2(I;W)$ are densely defined closed operators with closed ranges and are adjoints of each other. In particular,
  \begin{align}
      (L_0)^*=L_{H^*}=\overline{L_{H^*}}=(L_{H^*})^{**},\;\;L_{H^*}^*=L_0=\overline{L_0}=(L_0)^{**}.
  \end{align}\\
  \noindent c) The operators $L_{H^*,0}:D(L_{H^*,0})\rightarrow L^2(I;W)$ and $L:D(L)\rightarrow L^2(I;W)$ are densely defined closed operators with closed ranges and are adjoints of each other. In particular,
  \begin{align}
      (L_{H^*,0})^*=L=\overline{L}=L^{**},\;\;L^*=L_{H^*,0}=\overline{L_{H^*,0}}=(L_{H^*,0})^{**}.
  \end{align}
\end{theorem}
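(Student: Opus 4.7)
The strategy is to apply the abstract adjoint characterization from Theorem~\ref{thm:prem:FundResultAdjointsKerRanDecomp} twice: once to the pair $(L_0, L_{H^*})$ to obtain part~(b), and once to the pair $(L_{H^*,0}, L)$ to obtain part~(c). Part~(a) then comes for free, since being densely defined is already built into the conclusion of Theorem~\ref{thm:prem:FundResultAdjointsKerRanDecomp}.

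For part~(b), I would take $H = K = L^2(I;W)$ and set $A = L_0$, $B = L_{H^*}$ in Theorem~\ref{thm:prem:FundResultAdjointsKerRanDecomp}. The three hypotheses of that theorem are verified at once from results already established: the adjoint pairing $\langle L_0 f, g\rangle_W = \langle f, L_{H^*} g\rangle_W$ for $f \in D(L_0)$ and $g \in D(L_{H^*})$ is exactly Theorem~\ref{ThmAdjointCalcForClosedMinAndMaxOpsRelatedToSymmetricOps}(a), while the two subspace decompositions $\ker L_0 + \operatorname{ran} L_{H^*} = L^2(I;W)$ and $\ker L_{H^*} + \operatorname{ran} L_0 = L^2(I;W)$ are precisely the content of~\eqref{RanAndKerSumsOfL0AndH*IsAllL2}. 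The conclusion of Theorem~\ref{thm:prem:FundResultAdjointsKerRanDecomp} then immediately yields that $L_0$ and $L_{H^*}$ are densely defined closed operators with closed ranges and are adjoints of each other. The remaining identities $\overline{L_0} = L_0 = (L_0)^{**}$ and $\overline{L_{H^*}} = L_{H^*} = (L_{H^*})^{**}$ are standard consequences of the fact that any densely defined closed operator equals its own closure and its own double adjoint.

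For part~(c), I would first invoke Corollary~\ref{cor:SimplifiedIndex1HypImplyItForAdjoints} to observe that the index-$1$ hypotheses also hold for $H^*, W$ with respect to $J$ on $I$, so every result previously established for the $H$-coefficient applies with $H$ replaced by $H^*$ (and noting $L_{H^{**}} = L$). I would then apply Theorem~\ref{thm:prem:FundResultAdjointsKerRanDecomp} a second time, now with $A = L_{H^*,0}$ and $B = L$. The adjoint pairing $\langle L_{H^*,0} f, g\rangle_W = \langle f, L g\rangle_W$ comes from Theorem~\ref{ThmAdjointCalcForClosedMinAndMaxOpsRelatedToSymmetricOps}(a) applied with $H$ replaced by $H^*$; the two decompositions $\ker L_{H^*,0} + \operatorname{ran} L = L^2(I;W) = \operatorname{ran} L_{H^*,0} + \ker L$ come from~\eqref{RanAndKerSumsOfL0AndH*IsAllL2} under the same substitution. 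Reading off the conclusion of Theorem~\ref{thm:prem:FundResultAdjointsKerRanDecomp} completes part~(c), and the closure/double-adjoint identities follow as in part~(b).

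There is no real obstacle at this stage: the substantive work, namely verifying the range-plus-kernel decompositions in the regular (bounded interval) setting, has already been done in Theorem~\ref{ThmKerRanRelBetweenClosedMinAndMaxOps}, which in turn rests on the solvability result Proposition~\ref{PropKeySolvabilityLocalIndex1DAEs}. The theorem at hand is therefore essentially a packaging step, and the only subtlety is remembering to symmetrize the index-$1$ hypotheses via Corollary~\ref{cor:SimplifiedIndex1HypImplyItForAdjoints} before invoking the $H \leftrightarrow H^*$ version of the earlier theorems to handle part~(c).
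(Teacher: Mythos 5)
Your proof is correct, and it reaches the result by a cleaner assembly than the paper's written proof. You verify the hypotheses (\ref{AdjointABRelation})--(\ref{AdjointBAKerRanCond}) of Theorem \ref{thm:prem:FundResultAdjointsKerRanDecomp} for the pair $(A,B)=(L_0,L_{H^*})$, and again for $(L_{H^*,0},L)$ after symmetrizing the index-$1$ hypotheses via Corollary \ref{cor:SimplifiedIndex1HypImplyItForAdjoints} and using $H^{**}=H$; the pairing identity is Theorem \ref{ThmAdjointCalcForClosedMinAndMaxOpsRelatedToSymmetricOps}.a) and the two decompositions are exactly (\ref{RanAndKerSumsOfL0AndH*IsAllL2}) in Theorem \ref{ThmKerRanRelBetweenClosedMinAndMaxOps}, so the abstract theorem applies and yields parts b) and c), with part a) and the closure/double-adjoint identities following as you say. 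The paper itself flags precisely this comparison ($A=L_0$, $B=L_{H^*}$) in the remark preceding Theorem \ref{ThmAdjointCalcForClosedMinAndMaxOpsRelatedToSymmetricOps}, but its actual proof never invokes Theorem \ref{thm:prem:FundResultAdjointsKerRanDecomp}: it reduces by duality to the pair $(L_0,L_{H^*})$ and proves directly, in three steps, that $D(L_0)$ is dense, that $(L_0)^*=L_{H^*}$, and that $L_0$ is closed, using Proposition \ref{PropKeySolvabilityLocalIndex1DAEs}, Theorems \ref{ThmAdjointCalcForClosedMinAndMaxOpsRelatedToSymmetricOps} and \ref{ThmKerRanRelBetweenClosedMinAndMaxOps}, and the general adjoint facts from \cite{80RS}. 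Both routes rest on the same two ingredients (the pairing identity and the kernel-plus-range decompositions); yours is shorter and makes the role of the abstract adjoint theorem explicit, while the paper's in-line argument is essentially a specialization of the proof of that theorem, trading brevity for a self-contained display of how the surjectivity of $L_{H^*}$ and the orthogonality relations enter.
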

\begin{proof}
Assume the hypotheses. Then by Corollary \ref{cor:SimplifiedIndex1HypImplyItForAdjoints}, the index-$1$ hypotheses are also true for $H^*, W$ with respect to $J$ on the interval $I$. First, by duality (i.e., $H^{**}=H$) we need only prove the statements for $L_0$ and $L_{H^*}$ as then the results for $L_{H^*,0}$ and $L_{H}$ will follow immediately by duality. Second, once we have proven that $L_0$ and $L_{H^*}$ are densely defined closed operators then it follows immediately from general results on adjoints \cite[Theorem VIII.1.(b)]{80RS} that the closure of $L_0$ and $L_{H^*}$ (i.e., $\overline{L_0}$ and $\overline{L_{H^*}}$) satisfies the relationships $L_{H^*}=\overline{L_{H^*}}=(L_{H^*})^{**}$ and $L_0=\overline{L_0}=(L_0)^{**}$. Third, once we have proven that $L_0$ is a densely defined it will follow immediately that $L$ is densely defined [since $D(L_0)\subseteq D(L)$] and then by duality (i.e., $H^{**}=H$) it follows that $L_{H^*,0}, L_{H^*}$ are also densely defined. Fourth, if we can then prove that $(L_0)^*=L_{H^*}$, it will follows immediately from general results on adjoints \cite[Theorem VIII.1]{80RS} that $L_{H^*}$ is a closed operator (so that $L_{H^*}=\overline{L_{H^*}})$, $L_0$ is a closable operator, and $L_{H^*}^*=\overline{L_0}$. Finally, if we can then prove $L_0$ is a closed operator, i.e., $\overline{L_0}=L_0$, we will have completed the proof of the theorem.

    Thus, to prove the theorem we will now prove the following statements: (i) $L_0$ is densely defined; (ii) $(L_0)^*=L_{H^*}$; (iii) $L_0$ is a closed operator. Again, as in the proof of Theorem \ref{ThmKerRanRelBetweenClosedMinAndMaxOps}, we can just assume in the proof that $I$ is a compact interval as the results in the case of bounded intervals will follow from this.

    (i): Let $h\in D(L_0)^{\perp}$. Then for any solution $g\in D(\mathcal{L_{H^*}})$ of $\mathcal{L_{H^*}}g=h$ we have by Proposition \ref{PropKeySolvabilityLocalIndex1DAEs}.(ii) (applied to $H^*$ instead of $H$) that $g\in L^2_{loc}(I;W)$ and since $I$ is compact we have $L^2_{loc}(I;W)=L^2(I;W)$. It now follows that $g\in D(L_{H^*})$ and $L_{H^*}g=h$. Hence, from this and by Theorem \ref{ThmAdjointCalcForClosedMinAndMaxOpsRelatedToSymmetricOps}.a), it follows that for every $f\in D(L_0)$, we have
    \begin{align}
         \langle L_0f,g \rangle_W=\langle f,L_{H^*}g \rangle_W=\langle f,h \rangle_W=0.
    \end{align}
    This implies that $g\in \operatorname{ran}(L_0)^{\perp}$. By Theorem \ref{ThmKerRanRelBetweenClosedMinAndMaxOps} we know that $\ker L_{H_*}=(\operatorname{ran} L_0)^{\perp}$ so that $h=L_{H^*}g=0$. From which we conclude that $D(L_0)^{\perp}=\{0\}$ and thus $\overline{D(L_0)}=D(L_0)^{{\perp}{\perp}}=\{0\}^{\perp}=L^2(I;W)$. Therefore, $L_0$ is densely defined, which proves (i).

    (ii): From part (i) we know that the linear operator $L_0:D(L_0)\rightarrow L^2(I;W)$ is densely defined and so now we consider it's adjoint $(L_0)^*$. By Theorem \ref{ThmAdjointCalcForClosedMinAndMaxOpsRelatedToSymmetricOps}.a), it follows that $D(L_{H^*})\subseteq D((L_0)^*)$ and $(L_0)^*g=L_{H^*}g$ for all $g\in D(L_{H^*})$. Thus, to prove that $(L_0)^*=L_{H^*}$, it suffices to prove that $D((L_0)^*)\subseteq D(L_{H^*})$. Let $g\in D((L_0)^*)$ and set $h=(L_0)^*g$. Then, since we know by Theorem \ref{ThmKerRanRelBetweenClosedMinAndMaxOps} that $\operatorname{ran}L_{H^*}=L^2(I;W)$, there exists an $f\in D(L_{H^*})$ such that $L_{H^*}f=h$. Hence, from this and Theorem \ref{ThmAdjointCalcForClosedMinAndMaxOpsRelatedToSymmetricOps}.a), it follows that for every $u\in D(L_0)$, we have
    \begin{align}
        \langle L_0u,g \rangle_W=\langle u,(L_0)^*g \rangle_W=\langle u,L_{H^*}f \rangle_W=\langle L_0u,f \rangle_W
    \end{align}
    which implies $g-f\in (\operatorname{ran} L_0)^{\perp}$. By Theorem \ref{ThmKerRanRelBetweenClosedMinAndMaxOps} we know that $(\operatorname{ran} L_0)^{\perp}=\ker L_{H_*}\subseteq D(L_{H_*})$ and hence $g-f\in D(L_{H_*})$. As $f\in D(L_{H^*})$ and $D(L_{H^*})$ is a subspace of $L^2(I;W)$, it follows that $g\in D(L_{H^*})$. Therefore, $D((L_0)^*)\subseteq D(L_{H^*})$, which proves (ii).

    (iii) From part (ii) we know that $D(L_0)$ is dense in $L^2(I;W)$ and since $D(L_0)\subseteq D(L)$ then $D(L)$ is dense in $L^2(I;W)$. Hence, by the hypotheses using $H^*$ instead of $H$, it follows that $D(L_{H^*})$ is also dense in $L^2(I;W)$. Thus, as we proved $(L_0)^*=L_{H^*}$, it follows immediately from general results on adjoints \cite[Theorem VIII.1]{80RS} that $L_{H^*}$ is a closed operator (so that $L_{H^*}=\overline{L_{H^*}})$, $L_0$ is a closable operator, and $L_{H^*}^*=\overline{L_0}$. We will now prove $\overline{L_0}=L_0,$ i.e., $L_0$ is closed. Let $\{u_m\}_{m\in \mathbb{N}}\subseteq D(L_0)$ be a sequence converging in $L^2(I;W)$ converging to $u$ such that $\{L_0u_m\}_{m\in \mathbb{N}}$ converges in $L^2(I;W)$ to $f$. If we can prove that $u\in D(L_0)$ and $L_0u=f$ then this will prove that $L_0$ is a closed operator. First, since $L_0$ is closable and $L_{H^*}^*=\overline{L_0}$, it follows by general results on closable operators \cite[Proposition, p.\ 250]{80RS} that $u\in D(L_{H^*}^*)$ and $L_{H^*}^*u=f$. Next, let $g\in \ker L_{H^*}$. Then by Theorem \ref{ThmAdjointCalcForClosedMinAndMaxOpsRelatedToSymmetricOps}.a) it follows that
    \begin{align}
        \langle f,g \rangle_W=\lim_{m\rightarrow \infty}\langle L_0u_m,g \rangle_W=\lim_{m\rightarrow \infty}\langle u_m,L_{H^*}g \rangle_W=0
    \end{align}
    which implies $f\in (\ker L_{H^*})^{\perp}=\operatorname{ran}L_0$, where the latter equality follows from Theorem \ref{ThmKerRanRelBetweenClosedMinAndMaxOps}. It follows that there exists $v\in D(L_0)$ such that $L_0v=f=L_{H^*}^*u$. Hence, by Theorem \ref{ThmAdjointCalcForClosedMinAndMaxOpsRelatedToSymmetricOps}.a) it follows that for any $h\in D(L_{H^*})$, we have
    \begin{align}
        \langle v,L_{H^*}h \rangle_W=\langle L_0v,h \rangle_W=\langle L_{H^*}^*u,h \rangle_W=\langle u,L_{H^*}h \rangle_W
    \end{align}
    implying $u-v\in (\operatorname{ran}L_{H^*})^{\perp}=\ker L_0=\{0\}$, where the latter two equalities follow from Theorem \ref{ThmKerRanRelBetweenClosedMinAndMaxOps}, and hence $u=v\in D(L_0)$ with $L_0u=L_0v=f$. Thus, we've proved $L_0$ is a closed operator which proves (iii).
    Therefore, we have proven statements (i), (ii), and (iii) which proves the theorem now from our discussion above.
\end{proof}

We now extend Theorem \ref{ThmRegularIndex1MinMaxOpsL0LMainThm} to allow for arbitrary intervals $I\subseteq \mathbb{R}$, which include unbounded intervals, e.g., $I=\mathbb{R}$, but under the local index-$1$ hypotheses now. 

\begin{remark}
The proof of the next theorem is essentially based on the notion of
inductive limits in the category of Hilbert spaces and unbounded operators. And, although we do not elaborate more on this as it would require too much additional background, we highly recommend the references \cite{74AM, 75AM, 88JJ}, \cite[Exercises 11.5.26 and 11.5.27]{86KR}, \cite[Appendix A]{18LT}, and \cite{13LS} as a starting point for the interested reader.
\end{remark}
\begin{theorem}\label{ThmDenselyDefDomainMinMaxOpsLocallyIndex1Hyp}
  Suppose $I\subseteq \mathbb{R}$ is an interval (with nonempty interior) and the local index-$1$ hypotheses (see Def. \ref{DefIndex1Hyp}) are true for $H, W$ with respect to $J$ on the interval $I$. Then the following statements hold:\\
  \noindent i) The subspaces $D(L_0'), D(L_{H^*,0}'), D(L),$ and $D(L_{H^*})$ are all dense in $L^2(I;W)$.\\
  \noindent ii) The operators $L_0':D(L_0')\rightarrow L^2(I;W)$ and $L_{H^*}:D(L_{H^*})\rightarrow L^2(I;W)$ are densely defined. Furthermore, $L_{H^*}$ is a closed operator and $L_0$ is a closable operator such that its closure, i.e., $\overline{L_0'}=(L_0')^{**}$, and $L_{H^*}$ are adjoints of each other. In particular,
  \begin{align}
      (L_0')^*=(\overline{L_0'})^*=L_{H^*}=\overline{L_{H^*}}=(L_{H^*})^{**},\;\;(L_{H^*})^*=\overline{L_0'}.
  \end{align}\\
  \noindent iii) The operators $L_{H^*,0}':D(L_{H^*,0}')\rightarrow L^2(I;W)$ and $L:D(L)\rightarrow L^2(I;W)$ are densely defined. Furthermore, $L$ is a closed operator and $L_{H^*,0}'$ is a closable operator such that its closure, i.e., $\overline{L_{H^*,0}'}=(L_{H^*,0}')^{**}$, and $L$ are adjoints of each other. In particular,
  \begin{align}
      (L_{H^*,0}')^*=(\overline{L_{H^*,0}'})^*=L=\overline{L}=L^{**},\;\;L^*=\overline{L_{H^*,0}'}.
  \end{align}
\end{theorem}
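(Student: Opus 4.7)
My plan is to reduce everything to the bounded-interval case covered by Theorem \ref{ThmRegularIndex1MinMaxOpsL0LMainThm} by exhausting $I$ with compact subintervals $[a,b]\subseteq I$, on each of which the local index-$1$ hypotheses on $I$ specialize to the index-$1$ hypotheses, and then to transfer information between scales via restriction and extension-by-zero. For the density claim (i), given $f\in L^2(I;W)$ and $\varepsilon>0$, I would choose a compact $[a,b]$ in the interior of $I$ with $\|f-f\chi_{[a,b]}\|_W<\varepsilon/2$, invoke density of $D((L_0)_{[a,b]})$ in $L^2([a,b];W)$ from Theorem \ref{ThmRegularIndex1MinMaxOpsL0LMainThm}(a) to pick $u\in D((L_0)_{[a,b]})$ with $\|u-f|_{[a,b]}\|_{L^2([a,b];W)}<\varepsilon/2$, and observe that since $(Ju)(a)=(Ju)(b)=0$, the zero-extension $\tilde u$ of $u$ to $I$ satisfies $J\tilde u\in[W^{1,1}_{loc}(I)]^n$ with compact support contained in $[a,b]\subset\operatorname{int}I$, and $\mathcal L\tilde u$ vanishes off $[a,b]$, so $\tilde u\in D(L_0')$ with $\|\tilde u-f\|_{L^2(I;W)}<\varepsilon$. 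Density of $D(L_{H^*,0}')$, $D(L)$, and $D(L_{H^*})$ then follows by containment and Corollary \ref{cor:SimplifiedIndex1HypImplyItForAdjoints}.

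For closedness of $L$, take $\{u_m\}\subset D(L)$ with $u_m\to u$ and $Lu_m\to f$ in $L^2(I;W)$. For any compact $[a,b]\subseteq I$ (possibly touching endpoints), the restrictions lie in $D(L_{[a,b]})$ and converge in $L^2([a,b];W)$, while $L_{[a,b]}(u_m|_{[a,b]})\to f|_{[a,b]}$, so closedness of $L_{[a,b]}$ from Theorem \ref{ThmRegularIndex1MinMaxOpsL0LMainThm}(c) gives $u|_{[a,b]}\in D(L_{[a,b]})$ and $L_{[a,b]}(u|_{[a,b]})=f|_{[a,b]}$. Letting $[a,b]$ exhaust $I$ yields $Ju\in[W^{1,1}_{loc}(I)]^n$ and $\mathcal L u=f\in L^2(I;W)$, so $u\in D(L)$ with $Lu=f$. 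By Corollary \ref{cor:SimplifiedIndex1HypImplyItForAdjoints}, the same argument shows $L_{H^*}$ is closed.

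The heart of the proof is the adjoint identity $(L_0')^*=L_{H^*}$. The inclusion $L_{H^*}\subseteq(L_0')^*$ is Theorem \ref{ThmAdjointCalcForMinMaxOpsRelatedToSymmetricOps}(a). For the reverse, fix $g\in D((L_0')^*)$ and set $h:=(L_0')^*g$. Testing against $\tilde u\in D(L_0')$ obtained by zero-extending $u\in D((L_0)_{[a,b]})$ from any compact $[a,b]\subset\operatorname{int}I$ and invoking Theorem \ref{ThmRegularIndex1MinMaxOpsL0LMainThm}(b) on $[a,b]$ shows $g|_{[a,b]}\in D(L_{H^*,[a,b]})$ with $L_{H^*,[a,b]}(g|_{[a,b]})=h|_{[a,b]}$. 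The main obstacle is then upgrading this from compact subintervals of $\operatorname{int}I$ to arbitrary compact $[a,b]\subseteq I$ that may touch closed endpoints of $I$, since $D(L_0')$ test functions cannot, by definition, approximate the endpoints. To bridge this gap I would apply Proposition \ref{PropKeySolvabilityLocalIndex1DAEs}(ii) to $H^*$ on such an $[a,b]$: construct the unique $\tilde g\in D(\mathcal L_{H^*,[a,b]})$ solving the IVP $\mathcal L_{H^*}\tilde g=h|_{[a,b]}$ with $(J\tilde g)(t_0)=(Jg)(t_0)$ at some $t_0\in(a,b)\subseteq\operatorname{int}I$ where $(Jg)(t_0)\in\operatorname{ran}J$ is already defined from the interior information. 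Both $g|_{(a,b)}$ and $\tilde g|_{(a,b)}$ then lie in $D(\mathcal L_{H^*,(a,b)})$ and solve the same DAE with matching initial data, so the uniqueness statement in Proposition \ref{PropKeySolvabilityLocalIndex1DAEs}(ii), now applied on the open interval $(a,b)$, forces $g=\tilde g$ there. Since $J\tilde g\in[W^{1,1}([a,b])]^n$, we conclude $Jg$ admits that same $W^{1,1}([a,b])$ representative and $g|_{[a,b]}\in D(L_{H^*,[a,b]})$ with $L_{H^*,[a,b]}(g|_{[a,b]})=h|_{[a,b]}$. Gluing over a compact exhaustion of $I$ yields $g\in D(L_{H^*})$ with $L_{H^*}g=h$, proving $(L_0')^*\subseteq L_{H^*}$.

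With the three ingredients---density, closedness of $L$, and $(L_0')^*=L_{H^*}$---in hand, the standard adjoint theorem recalled in Section \ref{sec:Preliminaries} delivers the rest of (ii): $L_0'$ is closable because $(L_0')^*=L_{H^*}$ is densely defined, $\overline{L_0'}=(L_0')^{**}=(L_{H^*})^*$, and $L_{H^*}=\overline{L_{H^*}}=(L_{H^*})^{**}$ since adjoints of densely defined operators are automatically closed. Statement (iii) then follows from (ii) by swapping the roles of $H$ and $H^*$, which is legitimate because the local index-$1$ hypotheses transfer from $H$ to $H^*$ by Corollary \ref{cor:SimplifiedIndex1HypImplyItForAdjoints}.
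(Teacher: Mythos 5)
Your proposal is correct and follows essentially the same route as the paper: reduce to the compact-interval results of Theorem \ref{ThmRegularIndex1MinMaxOpsL0LMainThm} via an exhaustion of $I$, get density of $D(L_0')$ from zero-extensions of elements of the compact-interval minimal domains, prove $(L_0')^*\subset L_{H^*}$ by testing against those zero-extended elements and using $L_{\triangle,0}^*=L_{H^*,\triangle}$, and then invoke the general adjoint/closure theory together with Corollary \ref{cor:SimplifiedIndex1HypImplyItForAdjoints} and the $H\leftrightarrow H^*$ duality. Your two additions --- the direct compact-restriction proof that $L$ is closed (which the paper instead gets for free from $L_{H^*}=(L_0')^*$ being an adjoint) and the bridging argument via Proposition \ref{PropKeySolvabilityLocalIndex1DAEs}(ii) that upgrades the $W^{1,1}$ regularity of $Jg$ to compact subintervals touching a closed endpoint of $I$ --- are sound and make explicit a point the paper passes over quickly.
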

\begin{proof}
First, as the hypotheses are valid for both $H$ and $H^*$ (by Corollary \ref{cor:SimplifiedIndex1HypImplyItForAdjoints}), and $(H^*)^*=H$, then we need only prove the statement for $L_0'$ and $L_{H^*}$ as the other results will follow immediately by this duality. Second, once we have proven that $L_0'$ and $L_{H^*}$ are densely defined operators with $L_0'$ and $L_{H^*}$ a closable and closed operator, respectively, then it follows immediately from general results  that the closure of $L_0'$ and $L_{H^*}$ satisfy the relationships $L_{H^*}=\overline{L_{H^*}}=(L_{H^*})^{**}$ and $\overline{L_0'}=(L_0')^{**}$ and hence once we prove $(L_0')^*=L_{H^*}$, then it will follow immediately that $(L_{H^*})^*=\overline{L_0'}$. On the other hand, once we know that $D(L_0')$ is dense for any arbitrary $H$ satisfying the local index-1 hypotheses then by duality $D(L_{H^*,0}')$ is also dense so that since $D(L_{H^*,0}')\subseteq D(L_{H^*})$, it will follow that $D(L_{H^*})$ is dense which implies then by Theorem \ref{ThmAdjointCalcForMinMaxOpsRelatedToSymmetricOps}.a) that the adjoint $(L_0')^*$ of $L_0'$ is an extension of $L_{H^*}$, i.e., $D(L_{H^*})\subseteq D((L_0')^*)$ with $L_{H^*}f=(L_0')^*f$ for every $f\in D(L_{H^*})$ [which we denote by $L_{H^*}\subset (L_0')^*$], so that $(L_0')^*$ is densely defined from which it follows from general results on adjoints  that $L_0'$ is closable, and finally since $L_{H^*}\subset (L_0')^*$ then to prove $(L_0')^*=L_{H^*}$ (which implies from this by general results  that $L_{H^*}$ is closed), we need only prove $(L_0')^*\subset L_{H^*}$. Thus, to recap, to complete the proof we need only prove that $D(L_0')$ is dense for any arbitrary $H$ satisfying the local index-1 hypotheses and that $(L_0')^*\subset L_{H^*}$, i.e., $D((L_0')^*)\subseteq D(L_{H^*})$ and $L_{H^*}f=(L_0')^*f$ for every $f\in D((L_0')^*)$. We will do this next.

For any arbitrary $\alpha,\beta\in \mathbb{R}$ with $\alpha<\beta$ such that $\triangle=[\alpha,\beta]\subseteq \operatorname{int} I$, we consider $\mathcal{L}$ on the compact interval $\triangle$. Denote by $L_{\triangle,0}, L_{\triangle}$ the closed minimal and maximal operators generated by $\mathcal{L}$ in $L^2(\triangle;W)$. With abuse of notation, we can treat any element of $D(L_{\triangle,0})$ as an element of $D(L_0')$ by zero extension, i.e., setting it equal to zero on $I\setminus \triangle$. It then follows that 
\begin{align}
    \cup_{\triangle \subseteq I}D(L_{\triangle,0})=D(L_0').
\end{align}
As each $D(L_{\triangle,0})$ is dense in $L^2(\triangle;W)$  it then follows that $D(L_0')$ is dense in $L^2(I;W)$. Hence as $D(L_0')\subseteq D(L)$ with $L_0f=Lf$ for all $f\in D(L_0')$, it follows that both the linear operators $L_0':D(L_0')\rightarrow L^2(I;W)$ and $L:D(L)\rightarrow L^2(I;W)$ are densely defined with $L_0'\subset L$. It now follows by duality, using the same argument but with $H^*$ instead of $H$, that both the linear operators $L_{H^*,0}':D(L_{H^*,0}')\rightarrow L^2(I;W)$ and $L:D(L_{H^*})\rightarrow L^2(I;W)$ are densely defined. Hence, as mentioned above, to complete the proof we need only prove that $(L_0')^*\subset L_{H^*}$. We now write $\langle \cdot, \cdot \rangle_{\triangle}$ for the inner product in $L^2(\triangle;W)$; by $h_{\triangle}$ we denote the restriction of a function $h$ from $L^2(I;W)$ to $L^2(\triangle;W)$ (in the natural way). We already know that $L_{\triangle,0}^*=L_{H^*,\triangle}$ and $L_{\triangle,0}\subset L_0'$ (in the abuse of notation sense above), hence $L_{\triangle,0}\subset L_0'\subset L$. We also know that for every $h\in D(L)$ we have $h_{\triangle}\in D(L_{\triangle})$. Let $f\in D((L_0')^*)$. Then for any $g\in L^2(I;W)$ with $g\in D(L_{\triangle,0})$ [so that $g\in D(L_0')$ in the sense above] it follows that 
\begin{align}
    \langle ((L_0')^*f)_{\triangle},g_{\triangle}\rangle_{\triangle}=\langle (L_0')^*f,g\rangle=\langle f,L_0'g\rangle=\langle f,L_{\triangle,0}g\rangle=\langle f_{\triangle},L_{\triangle,0}g_{\triangle}\rangle_{\triangle}.
\end{align}
This implies that $f_{\triangle}\in D(L_{\triangle,0}^*)$  and, as $L_{\triangle,0}^*=L_{H^*,\triangle}$, this also implies 
\begin{align}
    ((L_0')^*f)_{\triangle} = L_{\triangle,0}^*f_{\triangle} = L_{H^*,\triangle}f_{\triangle} = (L_{H^*}f)_{\triangle}.
\end{align}
As this is true for any compact interval $\triangle$ with nonempty interior satisfying $\triangle \subseteq \operatorname{int} I$ then this implies  that $f\in D(\mathcal{L}_{H^*})$  and 
\begin{align}
    \mathcal{L}_{H^*}f=(L_0')^*f\in L^2(I;W),
\end{align}
which implies $f\in D(L_{H^*})$ and $L_{H^*}f=\mathcal{L}_{H^*}f=(L_0')^*f$. This proves that $(L_0')^*\subset L_{H^*}$, which completes the proof of the theorem.
\end{proof}

If the hypotheses of Theorem \ref{ThmDenselyDefDomainMinMaxOpsLocallyIndex1Hyp} are true then this theorem tells us $L_0'$ is a densely defined closable operator. As such, we know that its closure $\overline{L_0'}=(L_0')^{**}$ is the smallest closed operator extension of $L_0'$ (see \cite[Sec.\ VIII.2]{80RS}). Because of this we can extend the Definition \ref{DefL0InFiniteInterval} to unbounded intervals as follows.
\begin{definition}\label{DefL0InInfiniteInterval}
In the case $I$ is an unbounded interval and the local index-$1$ hypotheses (see Def. \ref{DefIndex1Hyp}) are true for $H, W$ with respect to $J$ on the interval $I$, the \textit{closed minimal operator $L_0$ generated by  $\mathcal{L}$} is defined by
\begin{align}
    L_0:=\overline{L_0'}=(L_0')^{**}.\label{DefL0InLocalIndex1Case}
\end{align}
\end{definition}

The next theorem shows in what sense this definition is an extension of the definition of $L_0$ in Definition \ref{DefL0InFiniteInterval} from bounded intervals to unbounded intervals.
\begin{theorem}\label{ThmAnalogyThm3Pt8InWeidmannBook}
If $I$ is a bounded interval and the index-$1$ hypotheses (see Def. \ref{DefIndex1Hyp}) are true for $H, W$ with respect to $J$ on the interval $I$, then \begin{align}
      L_0=\overline{L_0'}=(L_0')^{**}.
  \end{align}
\end{theorem}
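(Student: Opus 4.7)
The plan is to derive the claimed identity by chaining together the two immediately preceding structural theorems: Theorem \ref{ThmRegularIndex1MinMaxOpsL0LMainThm}, which gives the adjoint relationship between $L_0$ and $L_{H^*}$ on bounded intervals under the index-$1$ hypotheses, and Theorem \ref{ThmDenselyDefDomainMinMaxOpsLocallyIndex1Hyp}, which gives the adjoint relationship between $\overline{L_0'}$ and $L_{H^*}$ under the local index-$1$ hypotheses. No new solvability or density argument is required; the whole proof should amount to a clean duality chase.

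First, I would observe that on any interval one has $L^p(I)\subseteq L^p_{loc}(I)$ and $M_{n,m}(L^p(I))\subseteq M_{n,m}(L^p_{loc}(I))$ for each $p\in[1,\infty]$, so each condition in Def.\ \ref{DefIndex1Hyp} without the ``loc'' trivially implies the corresponding ``loc'' version. Hence the index-$1$ hypotheses on the bounded interval $I$ automatically imply the local index-$1$ hypotheses on $I$, and both Theorem \ref{ThmRegularIndex1MinMaxOpsL0LMainThm} and Theorem \ref{ThmDenselyDefDomainMinMaxOpsLocallyIndex1Hyp} apply to the present situation.

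Next, I would apply Theorem \ref{ThmDenselyDefDomainMinMaxOpsLocallyIndex1Hyp}.ii) to conclude that $L_0'$ is densely defined and closable, that $\overline{L_0'}=(L_0')^{**}$, and that
\begin{equation*}
  (L_0')^* = L_{H^*}.
\end{equation*}
Taking adjoints of both sides and using the general identity $(L_0')^{**}=\overline{L_0'}$, this gives
\begin{equation*}
  \overline{L_0'}=(L_0')^{**}=\bigl((L_0')^*\bigr)^{*}=(L_{H^*})^{*}.
\end{equation*}
Finally, Theorem \ref{ThmRegularIndex1MinMaxOpsL0LMainThm}.b) (with $H$ and $H^{*}$ interchanged, which is legal by Corollary \ref{cor:SimplifiedIndex1HypImplyItForAdjoints}) tells us that $(L_{H^*})^{*}=L_0$. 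Chaining the two identities yields $\overline{L_0'}=(L_0')^{**}=L_0$, as desired.

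There is essentially no substantive obstacle in this argument; the only point requiring care is the bookkeeping of which theorem assumes which version of the index-$1$ hypotheses and checking the implication between them. In particular, it is worth noting explicitly that although Theorem \ref{ThmRegularIndex1MinMaxOpsL0LMainThm} is stated only for bounded intervals (so that the endpoint-limit definition of $L_0$ in Def.\ \ref{DefL0InFiniteInterval} makes sense), Theorem \ref{ThmDenselyDefDomainMinMaxOpsLocallyIndex1Hyp} is stated for arbitrary intervals, and the present theorem lies exactly in their overlap, which is what makes the two descriptions of the closed minimal operator $L_0$ consistent and validates the extension Def.\ \ref{DefL0InInfiniteInterval} to unbounded intervals.
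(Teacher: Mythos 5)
Your proof is correct and follows essentially the same route as the paper: both arguments chain $(L_0')^*=L_{H^*}$ from Theorem \ref{ThmDenselyDefDomainMinMaxOpsLocallyIndex1Hyp} with $L_{H^*}^*=L_0$ from Theorem \ref{ThmRegularIndex1MinMaxOpsL0LMainThm}.b), using closability to get $\overline{L_0'}=(L_0')^{**}$. One small remark: no interchange of $H$ and $H^*$ is needed for the last step, since Theorem \ref{ThmRegularIndex1MinMaxOpsL0LMainThm}.b) already states $L_{H^*}^*=L_0$ directly (interchanging would instead yield $(L_{H^*,0})^*=L$, which is statement c)).
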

\begin{proof}
Assume the hypotheses. Then the local index-$1$ hypotheses (see Def. \ref{DefIndex1Hyp}) are also true for $H^*, W$ with respect to $J$ on the interval $I$ (by Corollary \ref{cor:SimplifiedIndex1HypImplyItForAdjoints}). Thus, by Theorem \ref{ThmDenselyDefDomainMinMaxOpsLocallyIndex1Hyp} we know that $L_0':D(L_0')\rightarrow  L^2(I;W)$ is a densely defined closable operator and hence it follows that $\overline{L_0'}=(L_0')^{**}$ (see \cite[Sec.\ VIII.2]{80RS}). Now, by Theorem \ref{ThmRegularIndex1MinMaxOpsL0LMainThm} we have $L_{H^*}^*=L_0$ and by Theorem \ref{ThmDenselyDefDomainMinMaxOpsLocallyIndex1Hyp} we know that $(L_0')^*= L_{H^*}$. Therefore,  $L_0=L_{H^*}^*=(L_0')^{**}=\overline{L_0'}$.
\end{proof}

One of the main problems in the spectral theory of DAEs is to answer the following question: If $H^*=H$, what additional hypotheses imply the maximal operator $L:D(L)\rightarrow L^2(I;W)$ generated by $\mathcal{L}$ is a self-adjoint operator, i.e., $L^*=L$? The goal of the next section is to consider this question under the additional assumption that $H$ is periodic.

\section{\label{sec:SpecThyPeriodicDAEs}Spectral theory for linear differential-algebraic equations with periodic coefficients}

In this section we will study the linear differential-algebraic equations with periodic coefficients [say, $d$-periodic for some fixed period $d\in (0,\infty)$], i.e., $d$-periodic linear DAEs (\ref{def:LinearDAEsInhomo}), in terms of the spectral problem (\ref{def:intro:DAEsStandardForm}) and the corresponding spectral theory of their associated DA operators $\mathcal{L}, L_0', L_0, L$ on the Hilbert space $L^2(\mathbb{R}; W)$. Thus, we continue to assume that (\ref{HypForJ})-(\ref{HypForWOnab}) are true for $J,H,W$ with the interval $I=(-\infty, \infty)=\mathbb{R}$, but now we have the addition periodicity hypothesis:
\begin{gather}
    H(t+d)=H(t),\;W(t+d)=W(t),\;\text{for all }t\in \mathbb{R}.\label{HypPeriodicityH}
\end{gather}
\begin{remark}
It should be remarked that our conclusions below are unaffected by weakening the latter hypothesis from ``for all $t\in \mathbb{R}$" to ``for a.e.\ $t\in\mathbb{R}$."
\end{remark}

In order to proceed in the more general fashion for the self-adjoint spectral theory, we eventually need to make use of the following additional hypotheses on $H$ and on the minimal and maximal operators $L_0':D(L_0')\rightarrow L^2(\mathbb{R};W)$ and $L:D(L)\rightarrow L^2(\mathbb{R};W)$, respectively, generated by the DA operator $\mathcal{L}:D(\mathcal{L})\rightarrow [\mathcal{M}(\mathbb{R})]^n$ [see (\ref{DefDomainOfGenDAEOperator})-(\ref{DefGenDAEOperator}) and Def.\ \ref{DefMinMaxOpGenerByTheDAEsOp}]:
\begin{gather}
    H(t)^*=H(t),\;\text{for a.e.\ }t\in(0,d).\label{NewHyp0PeriodicSetting}\\
    \text{$D(L_0')$ is dense in $L^2(\mathbb{R};W)$.}\label{NewHyp1PeriodicSetting}\\
    (L_0')^*=L.\label{NewHyp2PeriodicSetting}
\end{gather}
\begin{remark}\label{NewRemImportantOf3HypInThePeriodicSetting}
There are some remarks in order regarding the hypotheses (\ref{NewHyp1PeriodicSetting}), (\ref{NewHyp2PeriodicSetting}), and how they will be used to prove $L$ is self-adjoint under the hypothesis (\ref{NewHyp0PeriodicSetting}). First, by Lemma \ref{LemDomainsAreSubspacesForL0primeAndLWhichAreWellDefLinearOps} we know that $D(L_0'), D(L)$ are subspaces of $L^2(\mathbb{R};W)$ with $D(L_0')\subseteq D(L)$ and $L_0':D(L_0')\rightarrow L^2(\mathbb{R};W)$ and $L:D(L)\rightarrow L^2(\mathbb{R};W)$ defined by $Lg=\mathcal{L}g$ for $g\in D(L)$ and $L_0'f=\mathcal{L}f=Lf$ for $f\in D(L_0')$ are well-defined linear operators. Second, if (\ref{NewHyp1PeriodicSetting}) is true, i.e., $L_0'$ is densely defined, then $L$ is also densely defined since $L_0'\subset L$, and also by Theorem \ref{ThmAdjointCalcForMinMaxOpsRelatedToSymmetricOps} we know $L_0'$ is symmetric, hence closable, with $L\subset (L_0')^*=(\overline{L_0'})^*$ which implies from this and the fact $(L_0')^*$ is closed that $L$ must also be closable with $L\subset \overline{L}\subset (L_0')^*=(\overline{L_0'})^*$. Third, it follows from this that if (\ref{NewHyp1PeriodicSetting}) is true then (\ref{NewHyp2PeriodicSetting}) is equivalent to $D((L_0')^*)\subseteq D(L)$, in which case $(\overline{L_0'})^*=(L_0')^*=L$. Fourth, if (\ref{NewHyp1PeriodicSetting}) is true then by the corollary to \cite[Theorem VIII.3]{80RS} and by \cite[Theorem X.1]{75RS}, the following two conditions are equivalent: a) $L_0'$ is esssentially self-adjoint [i.e., it's closure $\overline{L_0'}=(L_0')^{**}$ is self-adjoint]; b) $\ker((L_0')^*- zI)=\ker((L_0')^*- \overline{z}I)=\{0\}$ for some $z\in \mathbb{C}\setminus \mathbb{R}$. Hence, if (\ref{NewHyp1PeriodicSetting}) and (\ref{NewHyp2PeriodicSetting}) are true then the following two conditions are equivalent: a) $L$ is self-adjoint; b) $\ker(L- zI)=\ker(L- \overline{z}I)=\{0\}$ for some $z\in \mathbb{C}\setminus \mathbb{R}$. Therefore, the goal of this section is to find additional ``natural" hypotheses besides (\ref{NewHyp0PeriodicSetting}), (\ref{NewHyp1PeriodicSetting}), and (\ref{NewHyp2PeriodicSetting}) that allow us to conclude that $\ker(L- zI)=\ker(L- \overline{z}I)=\{0\}$ for some $z\in \mathbb{C}\setminus \mathbb{R}$ as then from this remark we will have proven that $L$ is self-adjoint.
\end{remark}

\begin{definition}
For any $m,n\in \mathbb{N}$, we will need in our analysis below, the left ($+$) and right ($-$) \textit{translation} (or \textit{shift}) \textit{operators} $U_{\pm}:M_{m,n}(\mathcal{M}(\mathbb{R}))\rightarrow M_{m,n}(\mathcal{M}(\mathbb{R}))$, which are the linear operators that are inverses of each other and defined by
\begin{align*}
    f\in M_{m,n}(\mathcal{M}(\mathbb{R})),\;(U_{\pm}f)(t)=f(t\pm d),\; \forall t\in \mathbb{R},
\end{align*}
with equality of functions in the usual sense of equal a.e. on $\mathbb{R}$.
\end{definition}

The reader can verify that these translation operators are well-defined functions which are linear and inverses of each other. Additional properties of these translation operators, that are fundamental in our studies, are given in the following lemma and proposition which are stated in terms of the DA operator $\mathcal{L}$ and the minimal and maximal operators $L_0'$ and $L$, respectively.
\begin{lemma}\label{NewLemDAEOpCommutesWithTranslationOps}
The subspace $D(\mathcal{L})$ of $[\mathcal{M}(\mathbb{R})]^n$ is an invariant subspace of the translation operators $U_{\pm}:[\mathcal{M}(\mathbb{R})]^n\rightarrow [\mathcal{M}(\mathbb{R})]^n$ and
\begin{align}
    U_{\pm}(D(\mathcal{L}))=D(\mathcal{L}).
\end{align}
Moreover, these translation operators commute with the DA operator $\mathcal{L}:D(\mathcal{L})\rightarrow [\mathcal{M}(\mathbb{R})]^n$, i.e.,
\begin{align}
    \mathcal{L}U_{\pm}=U_{\pm} \mathcal{L}.
\end{align}
In addition, $\ker \mathcal{L}$ is an invariant subspace of these translation operators and
\begin{align}
    U_{\pm}(\ker \mathcal{L})=\ker \mathcal{L}.
\end{align}
\end{lemma}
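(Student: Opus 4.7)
The plan is to exploit three elementary facts: $J$ is a constant matrix (so multiplication by $J$ commutes with any evaluation or translation of an $[\mathcal{M}(\mathbb{R})]^n$-valued function), the translation operators $U_\pm$ commute with (weak) differentiation and preserve $[W^{1,1}_{loc}(\mathbb{R})]^n$, and the periodicity hypotheses $H(t+d)=H(t)$ and $W(t+d)=W(t)$ (whence also $W^{-1}(t+d)=W^{-1}(t)$ for a.e.\ $t$) yield the identities $U_\pm(Af)=A\,U_\pm f$ for $A\in\{H,W,W^{-1}\}$ and every $f\in[\mathcal{M}(\mathbb{R})]^n$.

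First I would establish invariance of $D(\mathcal{L})$. Given $f\in D(\mathcal{L})$, I need $J\,U_\pm f\in[W^{1,1}_{loc}(\mathbb{R})]^n$. Since $J$ does not depend on $t$, one has $J\,U_\pm f=U_\pm(Jf)$ a.e., and a routine change of variables shows $U_\pm$ is a bijection of $[W^{1,1}_{loc}(\mathbb{R})]^n$ onto itself satisfying $\tfrac{d}{dt}U_\pm g=U_\pm\tfrac{dg}{dt}$. Applying this to $g=Jf\in[W^{1,1}_{loc}(\mathbb{R})]^n$ gives $J\,U_\pm f\in[W^{1,1}_{loc}(\mathbb{R})]^n$, hence $U_\pm f\in D(\mathcal{L})$. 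Because $U_+$ and $U_-$ are mutual inverses on $[\mathcal{M}(\mathbb{R})]^n$, the inclusions $U_\pm(D(\mathcal{L}))\subseteq D(\mathcal{L})$ obtained in both directions upgrade to equality $U_\pm(D(\mathcal{L}))=D(\mathcal{L})$.

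Next, for the commutation relation, I would simply expand
\[
\mathcal{L}(U_\pm f)=W^{-1}\!\left(\tfrac{d}{dt}J\,U_\pm f+H\,U_\pm f\right),
\]
and apply the three commutation identities. The derivative term becomes $\tfrac{d}{dt}(U_\pm Jf)=U_\pm\tfrac{d}{dt}(Jf)$ by the previous paragraph. The multiplication terms transform via periodicity: $(H\,U_\pm f)(t)=H(t)f(t\pm d)=H(t\pm d)f(t\pm d)=U_\pm(Hf)(t)$ for a.e.\ $t$, and similarly $W^{-1}U_\pm g=U_\pm(W^{-1}g)$ for any $g\in[\mathcal{M}(\mathbb{R})]^n$. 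Combining these,
\[
\mathcal{L}(U_\pm f)=U_\pm\,W^{-1}\!\left(\tfrac{d}{dt}Jf+Hf\right)=U_\pm\mathcal{L}f,
\]
proving $\mathcal{L}U_\pm=U_\pm\mathcal{L}$ on $D(\mathcal{L})$.

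Finally, the invariance of $\ker\mathcal{L}$ is immediate from the commutation: if $\mathcal{L}f=0$ then $\mathcal{L}(U_\pm f)=U_\pm\mathcal{L}f=0$, so $U_\pm(\ker\mathcal{L})\subseteq\ker\mathcal{L}$; the reverse inclusion follows at once by applying $U_\mp$. I do not anticipate a genuine obstacle; the only point requiring a small amount of care is the distinction between ``for all $t$'' and ``for a.e.\ $t$'' in the periodicity hypothesis, but the preceding remark notes that the two formulations yield the same conclusions, and every identity above is an a.e.\ equality, which is precisely the notion of equality used in $[\mathcal{M}(\mathbb{R})]^n$.
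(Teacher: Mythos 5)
Your proposal is correct and follows essentially the same route as the paper's proof: invariance of $D(\mathcal{L})$ via $J\,U_\pm f=U_\pm(Jf)$ and the $U_\pm$-invariance of $[W^{1,1}_{loc}(\mathbb{R})]^n$, the commutation identity via periodicity of $H$, $W$ (hence $W^{-1}$) together with commutation of $U_\pm$ with $\frac{d}{dt}$, and kernel invariance as an immediate consequence, with equalities upgraded from inclusions because $U_+$ and $U_-$ are mutual inverses. No gaps.
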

\begin{proof}
First, note that $D(\mathcal{L})$ is a subspace of $[\mathcal{M}(\mathbb{R})]^n$ and $U_{\pm}:[\mathcal{M}(\mathbb{R})]^n\rightarrow [\mathcal{M}(\mathbb{R})]^n$ are invertible linear operators on $[\mathcal{M}(\mathbb{R})]^n$ which are inverses of each other. Thus, if we prove that $U_{\pm}(D(\mathcal{L}))\subseteq D(\mathcal{L})$ then it will follow immediately that $D(\mathcal{L})$ is an invariant subspace of $U_{\pm}$ and $U_{\pm}(D(\mathcal{L}))=D(\mathcal{L})$. We will now prove that $U_{\pm}(D(\mathcal{L}))\subseteq D(\mathcal{L})$. Let $f\in D(\mathcal{L})$ or, equivalently, $f\in [\mathcal{M}(\mathbb{R})]^n$ and $Jf\in [W^{1,1}_{loc}(\mathbb{R})]^n$. Then it follows immediately that $U_{\pm}f\in [\mathcal{M}(\mathbb{R})]^n$ and $J(U_{\pm}f)=U_{\pm}(Jf)\in [W^{1,1}_{loc}(\mathbb{R})]^n$ since $U_{\pm}:[W^{1,1}_{loc}(\mathbb{R})]^n\rightarrow [W^{1,1}_{loc}(\mathbb{R})]^n$, that is, the subspace $[W^{1,1}_{loc}(\mathbb{R})]^n$ of $[\mathcal{M}(\mathbb{R})]^n$ is an invariant subspace of $U_{\pm}:[\mathcal{M}(\mathbb{R})]^n\rightarrow [\mathcal{M}(\mathbb{R})]^n$. This implies that $U_{\pm}f\in D(\mathcal{L})$. Thus we have proven $U_{\pm}(D(\mathcal{L}))\subseteq D(\mathcal{L})$, as desired. Next, we prove that
\begin{align}
    \mathcal{L}(U_{\pm}f)=U_{\pm} (\mathcal{L}f),\;\;\forall f\in D(\mathcal{L}).
\end{align}
Let $f\in D(\mathcal{L})$. Then $\mathcal{L}f\in [\mathcal{M}(\mathbb{R})]^n$ and, as we have just proved, $U_{\pm}f\in D(\mathcal{L})$ so that $ \mathcal{L}(U_{\pm}f), U_{\pm} (\mathcal{L}f)\in [\mathcal{M}(\mathbb{R})]^n$. We will now prove that $\mathcal{L}(U_{\pm}f)$ and $U_{\pm} (\mathcal{L}f)$ are equal as elements in $[\mathcal{M}(\mathbb{R})]^n$. First, it follows from the assumption (\ref{HypPeriodicityH}) that
\begin{align}
    U_{\pm}(Hf)=(U_{\pm}H)(U_{\pm}f)=H(U_{\pm}f),\;U_{\pm}(Wf)=(U_{\pm}W)(U_{\pm}f)=W(U_{\pm}f)
\end{align}
in $[\mathcal{M}(\mathbb{R})]^n$. It also then follows by hypotheses (\ref{HypForWOnab}) that
\begin{align}
    U_{\pm}(W^{-1}f)=(U_{\pm}W^{-1})(U_{\pm}f)=W^{-1}(U_{\pm}f)
\end{align}
in $[\mathcal{M}(\mathbb{R})]^n$. Hence, it follows that
\begin{align}
    U_{\pm} (\mathcal{L}f)=U_{\pm}W^{-1} \left(\frac{d}{dt}Jf+Hf\right)=W^{-1}\left(\frac{d}{dt}J (U_{\pm}f)+H(U_{\pm}f)\right)=\mathcal{L}(U_{\pm}f)
\end{align}
in $[\mathcal{M}(\mathbb{R})]^n$. Thus, to complete the proof it remains only to prove that $U_{\pm}(\ker \mathcal{L})=\ker \mathcal{L}$. Again as $U_{+}$ and $U_{-}$ are inverses of each other, we need only prove that $U_{\pm}(\ker \mathcal{L})\subseteq \ker \mathcal{L}$. Let $f\in \ker \mathcal{L}$. Then it follows from the above that
\begin{align}
    \mathcal{L}(U_{\pm}f)=(\mathcal{L}U_{\pm})f=(U_{\pm}\mathcal{L})f=U_{\pm}(\mathcal{L}f)=U_{\pm}0=0
\end{align}
and so $U_{\pm}f\in \ker \mathcal{L}$. This proves that $U_{\pm}(\ker \mathcal{L})\subseteq \ker \mathcal{L}$, which completes the proof.
\end{proof}

\begin{proposition}\label{NewPropLAndL0PrimeCommutesWithTranslationOps}
The subspaces $D(L_0')$ and $D(L)$ of $L^2(\mathbb{R};W)$ are invariant subspaces of the translation operators $U_{\pm}:L^2(\mathbb{R};W)\rightarrow L^2(\mathbb{R};W)$ and
\begin{align}
    U_{\pm}(D(L_0'))=D(L_0'),\;\;U_{\pm}(D(L))=D(L).
\end{align}
Moreover, these translation operators commute with the minimal and maximal operators $L_0':D(L_0')\to L^2(\mathbb{R};W)$ and $L:D(L)\to L^2(\mathbb{R};W)$, respectively, i.e.,
\begin{align}
    L_0'U_{\pm}=U_{\pm} L_0',\;\; LU_{\pm}=U_{\pm} L.
\end{align}
In addition, $\ker L_0'$ and $\ker L$ are invariant subspace of these translation operators and
\begin{align}
    U_{\pm}(\ker L_0')=\ker L_0',\;\;U_{\pm}(\ker L)=\ker L.
\end{align}
\end{proposition}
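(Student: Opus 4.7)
The plan is to mirror the proof of Lemma \ref{NewLemDAEOpCommutesWithTranslationOps} at the level of $L^2(\mathbb{R};W)$, adding only the verification that the translation operators are well-defined (in fact unitary) on this weighted space, and that the compact-support condition defining $D(L_0')$ is preserved under translation. Since $U_+$ and $U_-$ are inverses of each other, it suffices in each case to prove one inclusion; the equality of the images then follows automatically.

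First I would verify that $U_{\pm}:L^2(\mathbb{R};W)\to L^2(\mathbb{R};W)$ is a well-defined isometry. For any $f\in L^2(\mathbb{R};W)$, a change of variables $s=t\pm d$ together with the periodicity $W(s\mp d)=W(s)$ from (\ref{HypPeriodicityH}) gives
\begin{align*}
\|U_{\pm}f\|_W^2 = \int_{\mathbb{R}} \langle W(t)f(t\pm d), f(t\pm d)\rangle\, dt = \int_{\mathbb{R}} \langle W(s)f(s), f(s)\rangle\, ds = \|f\|_W^2,
\end{align*}
so in particular $U_{\pm}f\in L^2(\mathbb{R};W)$. This uses only hypotheses (\ref{HypForWOnab}) and (\ref{HypPeriodicityH}).

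Next, for the maximal operator: let $f\in D(L)$, so $f\in L^2(\mathbb{R};W)\cap D(\mathcal{L})$ with $\mathcal{L}f\in L^2(\mathbb{R};W)$. By Lemma \ref{NewLemDAEOpCommutesWithTranslationOps}, $U_{\pm}f\in D(\mathcal{L})$ and $\mathcal{L}(U_{\pm}f)=U_{\pm}(\mathcal{L}f)$. By the isometry step, both $U_{\pm}f$ and $U_{\pm}(\mathcal{L}f)$ lie in $L^2(\mathbb{R};W)$, so $U_{\pm}f\in D(L)$ and $L(U_{\pm}f)=\mathcal{L}(U_{\pm}f)=U_{\pm}(\mathcal{L}f)=U_{\pm}(Lf)$. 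This gives $U_{\pm}(D(L))\subseteq D(L)$ and $LU_{\pm}=U_{\pm}L$ on $D(L)$; replacing $U_{\pm}$ by its inverse $U_{\mp}$ shows equality of the domains.

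For the minimal operator, the only additional point is the compact-support condition. If $f\in D(L_0')$, then $f\in D(L)$ and $Jf$ has compact support contained in $\operatorname{int}\mathbb{R}=\mathbb{R}$; say $\operatorname{supp}(Jf)\subseteq[a,b]$ for some $-\infty<a\le b<\infty$. Then $J(U_{\pm}f)=U_{\pm}(Jf)$ has support in the compact set $[a\mp d,b\mp d]\subseteq\mathbb{R}$, so $U_{\pm}f\in D(L_0')$ and $L_0'(U_{\pm}f)=L(U_{\pm}f)=U_{\pm}(Lf)=U_{\pm}(L_0'f)$. Again the reverse inclusion follows by applying $U_{\mp}$, giving $U_{\pm}(D(L_0'))=D(L_0')$ and $L_0'U_{\pm}=U_{\pm}L_0'$.

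Finally, for the kernel invariance, if $f\in\ker L$ then $U_{\pm}f\in D(L)$ by the step above and $L(U_{\pm}f)=U_{\pm}(Lf)=U_{\pm}0=0$, so $U_{\pm}f\in\ker L$; the same argument with $L_0'$ in place of $L$ handles $\ker L_0'$, and in both cases $U_{\mp}$ yields the reverse inclusion. No step presents a real obstacle: the substantive content is already packaged in Lemma \ref{NewLemDAEOpCommutesWithTranslationOps}, and the only new ingredient is the elementary change-of-variables computation that promotes $U_{\pm}$ to a unitary operator on the weighted space $L^2(\mathbb{R};W)$.
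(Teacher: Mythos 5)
Your proposal is correct and follows essentially the same route as the paper: reduce everything to the inclusions $U_{\pm}(D(L))\subseteq D(L)$ and $U_{\pm}(D(L_0'))\subseteq D(L_0')$ via Lemma \ref{NewLemDAEOpCommutesWithTranslationOps}, using that $U_{+}$ and $U_{-}$ are mutually inverse, with the periodicity of $W$ guaranteeing invariance of $L^2(\mathbb{R};W)$. The only cosmetic difference is that you spell out the change-of-variables isometry computation, which the paper asserts as immediate here (and proves as unitarity later in Theorem \ref{ThmFiniteDimKerLImpliesTrivialKerL}).
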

\begin{proof}
First, it follows immediately from the hypotheses (\ref{HypForWOnab}) and (\ref{HypPeriodicityH}) that the subspace $L^2(\mathbb{R};W)$ of $[\mathcal{M}(\mathbb{R})]^n$ is an invariant subspace of the translation operators $U_{\pm}:[\mathcal{M}(\mathbb{R})]^n\rightarrow [\mathcal{M}(\mathbb{R})]^n$ and their restriction $U_{\pm}:L^2(\mathbb{R};W)\rightarrow L^2(\mathbb{R};W)$ are linear operators which are inverses of each other. Second, on $D(L_0')$ we know that $L_0'=\mathcal{L}$ and on $D(L)$ we know that $L=\mathcal{L}$. It follows from this and Lemma \ref{NewLemDAEOpCommutesWithTranslationOps} that to prove the proposition we need only prove that
\begin{align}
    U_{\pm}(D(L_0'))\subseteq D(L_0'),\;\;U_{\pm}(D(L))\subseteq D(L).
\end{align}
Let $f\in D(L)$. Then $f\in L^2(\mathbb{R};W),$ $f\in D(\mathcal{L}),$ and $\mathcal{L}f\in L^2(\mathbb{R};W)$. It follows from this and Lemma \ref{NewLemDAEOpCommutesWithTranslationOps} that
\begin{align}
    U_{\pm}f\in L^2(\mathbb{R};W),\;\; U_{\pm}f\in D(\mathcal{L}),\;\;\mathcal{L}(U_{\pm}f)=U_{\pm}(\mathcal{L}f)\in L^2(\mathbb{R};W)
\end{align}
which implies $U_{\pm}f\in D(L)$. This proves $U_{\pm}(D(L))\subseteq D(L)$. Similarly, let $f\in D(L_0')$. Then $f\in D(L)$ and $Jf$ has compact support in $\mathbb{R}$. It follows that $U_{\pm}f\in D(L)$ and $J(U_{\pm}f)=U_{\pm}(Jf)$ has compact support in $\mathbb{R}$  which implies $U_{\pm}f\in D(L_0')$. This proves $U_{\pm}(D(L_0'))\subseteq D(L_0')$. Therefore, the proposition is proved.
\end{proof}

The next theorem is our main result on the spectral theory of $\mathcal{L}, L_0',$ and $L$ in regards to eigenvalues of finite multiplicities, in particular, whether they can have them or not. As it turns out, $L_0'$ and $L$ do not, whereas $\mathcal{L}$ can and in which case additional important information can be extracted. The key to the analysis is the commutivity of these operators with the translation operators $U_{\pm}$. 
\begin{theorem}\label{ThmFiniteDimKerLImpliesTrivialKerL}
Let $z\in \mathbb{C}$. Then following statements are true:\\
\noindent (a) $\{0\}\subseteq \ker(L_0'-zI)\subseteq \ker(L-zI)\subseteq \ker(\mathcal{L}-zI)$.\\
\noindent (b) If $\dim \ker (\mathcal{L}-zI)<\infty$ then either $\ker (\mathcal{L}-zI)=\{0\}$, in which case $\{0\}=\ker (L_0'-zI)=\ker (L-zI)=\ker (\mathcal{L}-zI)$, or there exists $f\in \ker (\mathcal{L}-zI)$ which is an eigenvector of the left translation operator $U_{+}:[\mathcal{M}(\mathbb{R})]^n\rightarrow [\mathcal{M}(\mathbb{R})]^n$ with a nonzero eigenvalue $\lambda$, i.e.,
\begin{align}
    \mathcal{L}f=zf,\;\;U_{+}f=\lambda f,\;\;f\not =0,\;\;\lambda\not =0,\label{MotivationForDefBlochSolutionsFloquetMultipliers}
\end{align}
for some $\lambda\in \mathbb{C}$.\\
\noindent (c) The translation operators $U_{\pm}:L^2(\mathbb{R};W)\rightarrow L^2(\mathbb{R};W)$ are unitary operators which are inverses of each other and have no eigenvalues.\\
\noindent (d) If $\dim \ker (L-zI)<\infty$ then $\ker (L_0'-zI)=\ker (L-zI)=\{0\}$.\\
\noindent (e) If $\dim \ker (L_0'-zI)<\infty$ then $\ker (L_0'-zI)=\{0\}$.
\end{theorem}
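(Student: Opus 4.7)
The plan is to prove the parts in order, with (a)--(c) providing the structural ingredients that make (d) and (e) quick corollaries. The common theme throughout is that, by Lemma \ref{NewLemDAEOpCommutesWithTranslationOps} and Proposition \ref{NewPropLAndL0PrimeCommutesWithTranslationOps}, the translation operators $U_{\pm}$ leave the domains of $\mathcal{L}$, $L$, and $L_{0}'$ invariant and commute with each of them, so (since $zI$ commutes with $U_{\pm}$) the three kernels $\ker(\mathcal{L}-zI)$, $\ker(L-zI)$, and $\ker(L_{0}'-zI)$ are each invariant under $U_{+}$.

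For part (a) I would simply cite Lemma \ref{LemDomainsAreSubspacesForL0primeAndLWhichAreWellDefLinearOps}: the nested inclusions $D(L_{0}')\subseteq D(L)\subseteq D(\mathcal{L})\cap L^{2}(\mathbb{R};W)$ together with the agreement of the three operators on the smaller domains immediately yield the nested inclusion of kernels. For part (b), the restriction $U_{+}|_{\ker(\mathcal{L}-zI)}$ is a linear endomorphism of a finite-dimensional space; if that space is nonzero, its characteristic polynomial has a root $\lambda\in\mathbb{C}$ with eigenvector $f\neq 0$, and $\lambda$ is forced to be nonzero because $U_{+}$ is invertible on $[\mathcal{M}(\mathbb{R})]^{n}$ (inverse $U_{-}$); if instead the space is trivial, part (a) closes the alternative.

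Part (c) has two pieces. The unitarity follows from the $d$-periodicity $W(t+d)=W(t)$ and the change of variables $\tau=t\pm d$, which show that $U_{\pm}$ are isometries on $L^{2}(\mathbb{R};W)$, and since they are mutual inverses they are unitary. The no-eigenvalue statement is the main technical step. I would suppose $U_{+}f=\lambda f$ with $f\in L^{2}(\mathbb{R};W)$ and $f\neq 0$; iteration gives $f(t+nd)=\lambda^{n}f(t)$ a.e.\ for every $n\in \mathbb{Z}$, and decomposing $\int_{\mathbb{R}}$ as $\sum_{n\in\mathbb{Z}}\int_{nd}^{(n+1)d}$ together with the $d$-periodicity of $W$ yields
\[
\langle f,f\rangle_{W}=\Bigl(\sum_{n\in\mathbb{Z}}|\lambda|^{2n}\Bigr)\int_{0}^{d}\langle W(t)f(t),f(t)\rangle\,dt.
\]
Since $\lambda\neq 0$ (invertibility of $U_{+}$), the doubly infinite geometric series diverges for every choice of $|\lambda|$, so the integral factor must vanish; positivity of $W$ a.e.\ then forces $f=0$ on $[0,d]$ and hence on all of $\mathbb{R}$ by the functional equation $f(t+nd)=\lambda^{n}f(t)$, contradicting $f\neq 0$.

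Parts (d) and (e) now fall out quickly: by Proposition \ref{NewPropLAndL0PrimeCommutesWithTranslationOps}, the kernels $\ker(L-zI)$ and $\ker(L_{0}'-zI)$ are invariant under $U_{+}$ and sit inside $L^{2}(\mathbb{R};W)$, so a finite-dimensional nontrivial such kernel would, exactly as in (b), produce an eigenvector of $U_{+}$ inside $L^{2}(\mathbb{R};W)$, which (c) forbids. Hence both kernels must be $\{0\}$, and for (d) the inclusion in (a) also gives $\ker(L_{0}'-zI)=\{0\}$. The main obstacle is the no-eigenvalue assertion in part (c); everything else is a direct combination of invariance of kernels under $U_{+}$ with finite-dimensional linear algebra, whereas the periodic-eigenvector computation in (c), converting the weighted $L^{2}$ norm into a bi-infinite geometric sum that cannot vanish without $f$ being zero, is where the periodicity hypothesis on $W$ is genuinely used.
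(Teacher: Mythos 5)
Your proof is correct and follows essentially the same route as the paper: invariance of the relevant kernels under the translation operators, finite-dimensional linear algebra producing a translation eigenvector with nonzero eigenvalue, and the $d$-periodicity together with the a.e.\ positivity of $W$ forcing $U_{\pm}$ to have no eigenvalues in $L^{2}(\mathbb{R};W)$, from which (d) and (e) follow. The only cosmetic differences are that the paper reduces to $z=0$ via $\mathcal{L}_{H}-zI=\mathcal{L}_{H-zW}$ whereas you note directly that $zI$ commutes with $U_{\pm}$, and in (c) the paper first uses unitarity to restrict to $|\lambda|=1$ while your divergent bi-infinite geometric series disposes of every $\lambda\neq 0$ at once.
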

\begin{proof}
Let $z\in \mathbb{C}$. As the proof for $z\not =0$ is the same for the proof with $z=0$ (as we can just replace $H$ with $H-zW$ and use the fact that $\mathcal{L}_{H}-zI=\mathcal{L}_{H-zW}, L_{H}-zI=L_{H-zW}, L_{H,0}'-zI=L_{H-zW,0}'$), then we can assume without loss of generality that $z=0$.

(a): This follows immediately from the fact that $\mathcal{L}, L, L_0'$ are linear operators on their respective domains $D(\mathcal{L}), D(L), D(L_0')$ and by their definitions we have that $Lf=\mathcal{L}f$ for every $f\in D(L)$ and $L_0'f=\mathcal{L}f$ for every $f\in D(L_0')$ with $D(L_0')\subseteq D(L)\subseteq D(\mathcal{L})$.

(b): By Lemma \ref{NewLemDAEOpCommutesWithTranslationOps} we know that $U_{\pm}:\ker \mathcal{L}\to \ker \mathcal{L}$ are well-defined linear operators on the vector space $\ker L$ over the field $\mathbb{C}$ which are inverses of each other. Hence, if $\dim \ker \mathcal{L}<\infty$ and $\ker \mathcal{L}\not=\{0\}$ then this implies by elementary linear algebra that there exists an eigenvector $f\in \ker \mathcal{L}$ and corresponding eigenvalue $\lambda \in \mathbb{C}$ of $U_+$ and since $U_+$ is invertible then $\lambda\not =0$. This proves either $\ker \mathcal{L}=\{0\}$, in which case it follows from (a) that $\{0\}=\ker L_0'=\ker L=\ker \mathcal{L}$, or there exists $f\in \ker L$ which is an eigenvector of the left translation operator $U_{+}$ with a nonzero eigenvalue $\lambda$.

(c): First, its obvious that $U_{\pm}:L^2(\mathbb{R};W)\to L^2(\mathbb{R};W)$ are well-defined linear operators which are inverses of each other. Thus, to prove $U_{-}$ is unitary it suffices to prove that its inverse $U_{+}$ is unitary. Let $f,g\in L^2(\mathbb{R};W)$. Then
\begin{gather*}
    \langle U_{+}f,U_{+}g\rangle_W=\int_{\mathbb{R}}\langle W(t)(U_{+}f)(t),(U_{+}g)(t)\rangle dt=\int_{\mathbb{R}}\langle W(t)f(t+d),g(t+d)\rangle dt\\
    =\int_{\mathbb{R}}\langle W(t-d)f(t),g(t)\rangle dt=\int_{\mathbb{R}}\langle W(t)f(t),g(t)\rangle dt=\langle U_{+}f,U_{+}g\rangle_W.
\end{gather*}
This proves $U_{-}$ is a unitary operator on $L^2(\mathbb{R};W)$. Next, as $U_{-}$ is the inverse of the unitary operator $U_{+}$ then to show the former has no eigenvalues it suffices to show the latter has no eigenvalues. Also, $U_{+}$ is a unitary operator so if it did have an eigenvalue $\lambda\in \mathbb{C}$ then it follows that $|\lambda|=1$. Hence, suppose $f\in L^2(\mathbb{R};W)$ and $\lambda\in \mathbb{C}$ with $|\lambda|=1$ such that $U_+f=\lambda f.$ We will prove that $f=0$, which will complete the proof of statement (c). As the equalities $\lambda^mf(\cdot)=(U_+^mf)(\cdot)=f(\cdot +md)$ hold as elements in $L^2(\mathbb{R};W)$ for any $m\in \mathbb{Z}$ then
\begin{align*}
    &\infty > \langle f,f\rangle_W = \int_{\mathbb{R}}\langle W(t)f(t), f(t) \rangle dt =\sum_{m\in \mathbb{Z}}\int_{[md,(m+1)d]}\langle W(t)f(t), f(t) \rangle dt\\
    &=\sum_{m\in \mathbb{Z}}\int_{[0,d]}\langle W(t+md)f(t+md), f(t+md) \rangle dt
    =\sum_{m\in \mathbb{Z}}\int_{[0,d]}\langle \lambda^m W(t)f(t), \lambda^mf(t)\rangle dt\\
    &=\sum_{m\in \mathbb{Z}}\int_{[0,d]}\langle W(t)f(t), f(t)\rangle dt
\end{align*}
which implies $\int_{[0,d]}\langle W(t)f(t), f(t)\rangle dt=0$ and hence $\langle f, f \rangle_W =0$ so that $f=0$, as desired.

(d): By Proposition \ref{NewPropLAndL0PrimeCommutesWithTranslationOps} we know that $U_{+}:\ker L\to \ker L$ is a linear operator on the vector space $\ker L$ over the field $\mathbb{C}$, which commutes with the operator $L$, and is the restriction of the operator $U_{+}:L^2(\mathbb{R};W)\rightarrow L^2(\mathbb{R};W)$ to the invariant subspace $\ker L$. Hence, if $\dim \ker L<\infty$ and $\ker L\not=\{0\}$ then this implies by elementary linear algebra of the existence of an eigenvalue of $U_{+}:\ker L\to \ker L$ and hence of $U_{+}:L^2(\mathbb{R};W)\rightarrow L^2(\mathbb{R};W)$, a contradiction of (c). This proves that if $\dim \ker L<\infty$ then $\ker L=\{0\}$ and hence by (a), $\ker L_0'=\ker L=\{0\}$.

(e): The proof of (e) follows from Proposition \ref{NewPropLAndL0PrimeCommutesWithTranslationOps} using a similar argument as the proof of (d).
\end{proof}

\begin{remark}
Any function $f$ and scalar $\lambda=e^{ikd}$ satisfying the eigenvalue problem (\ref{MotivationForDefBlochSolutionsFloquetMultipliers}) with $z\in \mathbb{C}$ is called a Bloch solution and Floquet multiplier, respectively, of the periodic DAEs $\mathcal{L}$ in which case the scalar $k$ is called a wavenumber. The multivalued function $z=z(k)$ of $k$ is then referred to as the (complex) dispersion relation. 
\end{remark}

The next theorem is key to proving our main result in this paper on the self-adjoint spectral theory of periodic DAEs.
\begin{theorem}\label{NewThmMainThmSelfAdjointnessPeriodicDAEs}
  Suppose (\ref{NewHyp0PeriodicSetting}), (\ref{NewHyp1PeriodicSetting}), and (\ref{NewHyp2PeriodicSetting}) are true. If there exists $z_0\in \mathbb{C}\setminus \mathbb{R}$ such that
  \begin{align}
      \dim \ker (\mathcal{L}-z_0I) < \infty \text{ and } \dim \ker (\mathcal{L}-\overline{z_0}I) < \infty
  \end{align}
  or
  \begin{align}
      \dim \ker (L-z_0I) < \infty \text{ and } \dim \ker (L-\overline{z_0}I) < \infty
  \end{align}
  then
  \begin{align}
      \ker (L-z_0I)=\ker (L-\overline{z_0}I)=\{0\},
  \end{align}
  \begin{align}
      L:D(L)\to L^2(\mathbb{R};W)\text{ is a self-adjoint operator on } L^2(\mathbb{R};W),
  \end{align}
  and
  \begin{align}
      L\text{ has no eigenvalues of finite multiplicity.}
  \end{align}
\end{theorem}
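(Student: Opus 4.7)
The plan is to reduce all three conclusions to the finite-multiplicity dichotomy of Theorem \ref{ThmFiniteDimKerLImpliesTrivialKerL} and to invoke the essential self-adjointness criterion laid out in Remark \ref{NewRemImportantOf3HypInThePeriodicSetting}. Since the commutation of $L_0', L, \mathcal{L}$ with the unitary translation $U_{+}$ (which has empty point spectrum on $L^2(\mathbb{R};W)$) has already been extracted in that theorem, the present result becomes a matter of assembling those pieces in the correct order.

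First I would establish that $\ker(L - z_0 I) = \ker(L - \overline{z_0} I) = \{0\}$. If the hypothesis is phrased in terms of $\mathcal{L}$, the inclusions $\ker(L - zI) \subseteq \ker(\mathcal{L} - zI)$ from Theorem \ref{ThmFiniteDimKerLImpliesTrivialKerL}(a) immediately yield $\dim \ker(L - z_0 I) < \infty$ and $\dim \ker(L - \overline{z_0} I) < \infty$, reducing matters to the alternative form of the hypothesis. Part (d) of the same theorem then forces $\ker(L - z_0 I) = \ker(L - \overline{z_0} I) = \{0\}$.

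To conclude that $L$ is self-adjoint, I would apply the equivalence singled out in Remark \ref{NewRemImportantOf3HypInThePeriodicSetting}: under (\ref{NewHyp1PeriodicSetting}) and (\ref{NewHyp2PeriodicSetting}) one has $L = (L_0')^{*}$ with $L_0'$ symmetric, so the standard essential self-adjointness criterion for $L_0'$ translates into the assertion that $L$ is self-adjoint iff $\ker(L - zI) = \ker(L - \overline{z}I) = \{0\}$ for some $z \in \mathbb{C} \setminus \mathbb{R}$, which has just been verified at $z = z_0$. For the last conclusion, suppose toward contradiction that $\lambda \in \mathbb{C}$ were an eigenvalue of $L$ with $0 < \dim \ker(L - \lambda I) < \infty$; then Theorem \ref{ThmFiniteDimKerLImpliesTrivialKerL}(d), which holds for every $z \in \mathbb{C}$, would force $\ker(L - \lambda I) = \{0\}$, a contradiction.

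The ``hard part'' is therefore not located in this theorem at all: the genuine obstacle was resolved earlier, in showing that $U_{+}\colon L^2(\mathbb{R};W)\to L^2(\mathbb{R};W)$ is a unitary with empty point spectrum and that it leaves $\ker(L - zI)$ invariant, so that any nontrivial finite-dimensional invariant subspace would contradict the absence of eigenvalues of $U_{+}$. What remains is purely organizational: observing that the $\mathcal{L}$-formulation of the hypothesis collapses into the $L$-formulation via the kernel inclusion of part (a), and that triviality of the kernels at a single non-real $z_0$ simultaneously delivers self-adjointness and rules out eigenvalues of finite multiplicity.
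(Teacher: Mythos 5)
Your proposal is correct and follows essentially the same route as the paper: reduce the $\mathcal{L}$-form of the hypothesis to the $L$-form via the kernel inclusions of Theorem \ref{ThmFiniteDimKerLImpliesTrivialKerL}(a), use part (d) to get triviality of $\ker(L-z_0I)$ and $\ker(L-\overline{z_0}I)$ together with the absence of eigenvalues of finite multiplicity, and then invoke the equivalence recorded in Remark \ref{NewRemImportantOf3HypInThePeriodicSetting} under (\ref{NewHyp0PeriodicSetting})--(\ref{NewHyp2PeriodicSetting}) to conclude self-adjointness of $L$. The only difference is cosmetic (your contradiction phrasing for the last conclusion versus the paper's direct appeal to part (d)).
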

\begin{proof}
First, by Theorem \ref{ThmFiniteDimKerLImpliesTrivialKerL}.(d) we know that $L$ has no eigenvalues of finite multiplicity. Second, by Theorem \ref{ThmFiniteDimKerLImpliesTrivialKerL}.(a) it suffices to prove the statement assuming $\dim \ker (L-z_0I) < \infty$ and $\dim \ker (L-\overline{z_0}I) < \infty$ for some $z_0\in \mathbb{C}\setminus \mathbb{R}$. Thus, assume this is true. Then $\ker (L-z_0I)=\ker (L-\overline{z_0}I)=\{0\}$ since $L$ has no eigenvalues of finite multiplicity. Assume also that (\ref{NewHyp0PeriodicSetting}), (\ref{NewHyp1PeriodicSetting}), and (\ref{NewHyp2PeriodicSetting}) are true. It now follows from this by Remark \ref{NewRemImportantOf3HypInThePeriodicSetting} that $L:D(L)\to L^2(\mathbb{R};W)$ is a self-adjoint operator on $L^2(\mathbb{R};W)$. This proves the theorem.
\end{proof}

The following is the main result of this paper.
\begin{theorem}\label{thm:MainResultOfThePaper}
Suppose (\ref{NewHyp0PeriodicSetting}) and that the local index-1 hypotheses (see Def.\ \ref{DefIndex1Hyp}) are true for $H-z_0W, W$ with respect to $J$ on the interval $\mathbb{R}$ for some $z_0\in\mathbb{C}\setminus\mathbb{R}$.  Then (\ref{NewHyp1PeriodicSetting}) and (\ref{NewHyp2PeriodicSetting}) are true, and
\begin{gather}
      L:D(L)\to L^2(\mathbb{R};W)\text{ is a self-adjoint operator on } L^2(\mathbb{R};W),\\
      L_0':D(L_0')\to L^2(\mathbb{R};W)\text{ is essentially self-adjoint with closure $L$ (i.e., $\overline{L_0'}=L$)},\\
      L\text{ has no eigenvalues of finite multiplicity}.
  \end{gather}
\end{theorem}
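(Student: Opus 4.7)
The plan is to exploit the spectral-shift trick $\widetilde H := H - z_0 W$, for which the local index-1 hypotheses hold by assumption, and to observe that $\mathcal{L}_{\widetilde H} = \mathcal{L} - z_0 I$, $L_{\widetilde H,0}' = L_0' - z_0 I$, and $L_{\widetilde H} = L - z_0 I$ on their common domains (these domains depend only on $J,W$, not on the $H$-part of the coefficient). Applying Theorem \ref{ThmDenselyDefDomainMinMaxOpsLocallyIndex1Hyp} to $\widetilde H$ then yields at once that $D(L_0')$ is dense in $L^2(\mathbb{R};W)$, which is (\ref{NewHyp1PeriodicSetting}), together with $(L_{\widetilde H,0}')^* = L_{\widetilde H^*}$. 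Since $H^* = H$ by (\ref{NewHyp0PeriodicSetting}) we have $\widetilde H^* = H - \overline{z_0} W$, so the latter identity reads $(L_0')^* - \overline{z_0} I = L - \overline{z_0} I$, i.e.\ $(L_0')^* = L$, which is (\ref{NewHyp2PeriodicSetting}).

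Next I would produce a finite-dimensional kernel bound at $z_0$ and at $\overline{z_0}$. By Proposition \ref{PropKeySolvabilityLocalIndex1DAEs}(ii) applied to $\widetilde H$ with inhomogeneity $g = 0$, for every $f_0 \in \operatorname{ran} J$ and any fixed $t_0 \in \mathbb{R}$ there is a \emph{unique} $f \in D(\mathcal{L})$ with $(\mathcal{L} - z_0 I)f = 0$ and $(Jf)(t_0) = f_0$. Hence the linear map $f \mapsto (Jf)(t_0)$ is a bijection from $\ker(\mathcal{L} - z_0 I)$ onto $\operatorname{ran} J$, giving $\dim \ker(\mathcal{L} - z_0 I) = \operatorname{rank} J < \infty$. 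Corollary \ref{cor:SimplifiedIndex1HypImplyItForAdjoints} transfers the local index-1 hypotheses from $\widetilde H$ to $\widetilde H^* = H - \overline{z_0} W$, so the same argument produces $\dim \ker(\mathcal{L} - \overline{z_0} I) < \infty$. Because $\ker(L - zI) \subseteq \ker(\mathcal{L} - zI)$ by Theorem \ref{ThmFiniteDimKerLImpliesTrivialKerL}(a), both $\ker(L - z_0 I)$ and $\ker(L - \overline{z_0} I)$ are then finite-dimensional.

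With (\ref{NewHyp0PeriodicSetting})--(\ref{NewHyp2PeriodicSetting}) and these finite-dimensional kernel bounds in hand, Theorem \ref{NewThmMainThmSelfAdjointnessPeriodicDAEs} directly delivers that $L$ is self-adjoint on $L^2(\mathbb{R};W)$ and has no eigenvalues of finite multiplicity. The remaining essential self-adjointness of $L_0'$ is then immediate: by Theorem \ref{ThmAdjointCalcForMinMaxOpsRelatedToSymmetricOps}(b) with $H^* = H$, $L_0'$ is symmetric, hence closable, and then $\overline{L_0'} = (L_0')^{**} = L^* = L$ by (\ref{NewHyp2PeriodicSetting}) and the self-adjointness of $L$.

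The only genuine obstacle I foresee is careful notational bookkeeping around the spectral shift: matching $(L_0')^* = L$ correctly from the adjoint statement of Theorem \ref{ThmDenselyDefDomainMinMaxOpsLocallyIndex1Hyp} (which produces $(L_{\widetilde H,0}')^* = L_{\widetilde H^*}$ rather than $L_{\widetilde H}$), and checking that the Hermiticity hypothesis (\ref{NewHyp0PeriodicSetting}) is precisely what makes $\widetilde H^*$ the conjugate-shifted coefficient $H - \overline{z_0} W$ so that (\ref{NewHyp2PeriodicSetting}) follows after cancelling $\overline{z_0} I$ from both sides. All the genuine analytic content is already packaged in Theorem \ref{ThmDenselyDefDomainMinMaxOpsLocallyIndex1Hyp}, Proposition \ref{PropKeySolvabilityLocalIndex1DAEs}(ii), and Theorem \ref{NewThmMainThmSelfAdjointnessPeriodicDAEs}, so no new estimates are required.
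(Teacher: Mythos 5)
Your proposal is correct and follows essentially the same route as the paper: shift the coefficient to $H-z_0W$, invoke Theorem \ref{ThmDenselyDefDomainMinMaxOpsLocallyIndex1Hyp} (plus Corollary \ref{cor:SimplifiedIndex1HypImplyItForAdjoints} and $(H-z_0W)^*=H-\overline{z_0}W$) to obtain (\ref{NewHyp1PeriodicSetting}) and (\ref{NewHyp2PeriodicSetting}), and then conclude self-adjointness and absence of finite-multiplicity eigenvalues via Theorem \ref{NewThmMainThmSelfAdjointnessPeriodicDAEs}, with $\overline{L_0'}=(L_0')^{**}=L^*=L$ giving essential self-adjointness. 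The only (harmless) deviation is the finiteness step, where you read $\dim\ker(\mathcal{L}-z_0I)=\operatorname{rank}J<\infty$ directly from the IVP bijection of Proposition \ref{PropKeySolvabilityLocalIndex1DAEs}(ii) on $\mathbb{R}$, while the paper argues by contradiction through restriction to $[0,d]$ and Corollary \ref{CorKeyRanKerRelForIndex1HypL}; both rest on the same solvability result.
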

\begin{proof}
Assume the hypotheses. First, we have that
\begin{align}
    (H-z_0W)^*=H-\overline{z_0}W.
\end{align}
Thus, the local index-1 hypotheses are true for both $H-z_0W, W$ and $(H-z_0W)^*, W$ with respect to $J$ on the interval $\mathbb{R}$ (by Corollary \ref{cor:SimplifiedIndex1HypImplyItForAdjoints}).
Second,
\begin{align}
    \mathcal{L}-z_0I&=\mathcal{L}_{H-z_0}, L-z_0I=L_{H-z_0W}, L_0'-z_0I=L_{H-z_0W,0}',\\
     \mathcal{L}-\overline{z_0}I&=\mathcal{L}_{H-\overline{z_0}W}, L-\overline{z_0}I=L_{H-\overline{z_0}W}, L_0'-\overline{z_0}I=L_{H-\overline{z_0}W,0}'.
\end{align}
Thus, by Theorem \ref{ThmDenselyDefDomainMinMaxOpsLocallyIndex1Hyp} it follows that the subspaces $D(L_{H-z_0W,0}')$, $D(L_{(H-z_0W)^*,0}')$, $D(L_{H-z_0W}),$ and $D(L_{(H-z_0W)^*})$ are all dense in $L^2(\mathbb{R};W)$,
\begin{gather}
      (L_{H-z_0W,0}')^*=(\overline{L_{H-z_0W,0}'})^*=L_{(H-z_0W)^*}=L_{H-\overline{z_0}W},\\
      L_{(H-z_0W)^*}=\overline{L_{(H-z_0W)^*}}=(L_{(H-z_0W)^*})^{**},\\
      (L_{H-z_0W})^*=(L_{(H-z_0W)^*})^*=\overline{L_{H-z_0W,0}'}
  \end{gather}
  and
  \begin{gather}
      (L_{H-\overline{z_0}W,0}')^*=(L_{(H-z_0W)^*,0}')^*=(\overline{L_{(H-z_0W)^*,0}'})^*=L_{H-z_0W},\\
      L_{H-z_0W}=\overline{L_{H-z_0W}}=(L_{H-z_0W})^{**},\\
      (L_{H-z_0W})^*=\overline{L_{(H-z_0W)^*,0}'}.
  \end{gather}
  As we have
  \begin{gather}
      D(L_{H-z_0W,0}')=D(L_0-z_0I),\;D(L_{(H-z_0W)^*,0}')=D(L_0-\overline{z_0}I),\\
      D(L_{H-z_0W})=D(L-z_0I),\;\;D(L_{(H-z_0W)^*,0}')=D(L-\overline{z_0}I)
  \end{gather}
  and 
  \begin{align}
      D(L_0'-z_0I)=D(L_0'),\;D(L-z_0I)=D(L),
  \end{align}
  it follows that the conclusions of Theorem \ref{ThmDenselyDefDomainMinMaxOpsLocallyIndex1Hyp}, namely, statements (i), (ii), and (iii) are all true for the interval $\mathbb{R}$. This implies that (\ref{NewHyp1PeriodicSetting}) and (\ref{NewHyp2PeriodicSetting}) are true. Thus, by Theorem \ref{NewThmMainThmSelfAdjointnessPeriodicDAEs}, in order to prove our theorem we need only prove the claim
  \begin{align}
      \dim \ker (\mathcal{L}-z_0I) < \infty \text{ and } \dim \ker (\mathcal{L}-\overline{z_0}I) < \infty.
  \end{align}
  If this was not true then either
  \begin{align}
      \dim \ker (\mathcal{L}-z_0I) = \infty \text{ or } \dim \ker (\mathcal{L}-\overline{z_0}I) = \infty
  \end{align}
  and this would imply  that
  \begin{align}
      \dim \ker (\mathcal{L}_{[0,d]}-z_0I) = \infty \text{ or } \dim \ker (\mathcal{L}_{[0,d]}-\overline{z_0}I) = \infty
  \end{align}
  (where $\mathcal{L}_{[0,d]}$ denotes the DA operator associated with $[0,d], J, H|_{[0,d]}, W|_{[0,d]}$), a contradiction of Corollary \ref{CorKeyRanKerRelForIndex1HypL}. This proves the claim. Therefore, our theorem has been proven.
\end{proof}

Using Theorem \ref{thm:MainResultOfThePaper}, we now give an example of a maximal operator $L$ that is self-adjoint, has no eigenvalues of finite multiplicity, but does have an eigenvalue of infinite multiplicity.
\begin{example}\label{ex:MaxOpSelfAdjWithEigenvalueInftMult}
Consider the $d$-periodic DAEs (for any $d>0$) in canonical form:
\begin{equation*}
       J\frac{d}{dt}f+ Hf=\lambda Wf,
\end{equation*}
where 
\begin{gather*}
    J=i\begin{bmatrix}
    1 & 0 \\
    0 & 0
    \end{bmatrix},\;
    H=\begin{bmatrix}
    0 & 0 \\
    0 & 0
    \end{bmatrix},\;W=\begin{bmatrix}
    1 & 0 \\
    0 & 1
    \end{bmatrix}=I_2,\;f=\begin{bmatrix}
    f_1  \\
    f_2 
    \end{bmatrix},
\end{gather*}
that is,
\begin{align*}
    i\begin{bmatrix}
    1 & 0 \\
    0 & 0
    \end{bmatrix}\frac{d}{dt}\begin{bmatrix}
    f_1  \\
    f_2 
    \end{bmatrix}=\lambda \begin{bmatrix}
    f_1  \\
    f_2 
    \end{bmatrix}.
\end{align*}
Then the DA operator $\mathcal{L}:D(\mathcal{L})\to [\mathcal{M}(\mathbb{R})]^2$ associated with $\mathbb{R}, J, H, W$ is
\begin{gather*}
 D(\mathcal{L})=\left\{f\in [\mathcal{M}(\mathbb{R})]^2:Jf\in[W^{1,1}_{loc}(\mathbb{R})]^2\right\}
    = \left\{\begin{bmatrix}
    f_1  \\
    f_2 
    \end{bmatrix}:f_1\in W^{1,1}_{loc}(\mathbb{R}), f_2\in \mathcal{M}(\mathbb{R})\right\},\\
    \mathcal{L}f=W^{-1}\left(\frac{d}{dt}Jf+ Hf\right)=\begin{bmatrix}
    i\frac{df_1}{dt}  \\
    0 
    \end{bmatrix},\;\;f=\begin{bmatrix}
    f_1  \\
    f_2 
    \end{bmatrix}\in D(\mathcal{L}).
\end{gather*}
Next, the Hilbert space $L^2(\mathbb{R};W)$ is just
\begin{align*}
    L^2(\mathbb{R};W)=[L^2(\mathbb{R})]^2,
\end{align*}
and the maximal and minimal operators, $L:D(L)\rightarrow L^2(\mathbb{R};W)$ and $L_0':D(L_0')\rightarrow L^2(\mathbb{R};W)$, generated by $\mathcal{L}$ are
\begin{gather*}
    Lf=\mathcal{L}f=\begin{bmatrix}
    i\frac{df_1}{dt}  \\
    0 
    \end{bmatrix},\;\;\text{for }f=\begin{bmatrix}
    f_1  \\
    f_2 
    \end{bmatrix}\in D(L),\\
    \;L_0'f=\mathcal{L}f=\begin{bmatrix}
    i\frac{df_1}{dt}  \\
    0 
    \end{bmatrix},\;\;\text{for }f=\begin{bmatrix}
    f_1  \\
    f_2 
    \end{bmatrix}\in D(L_0'),
\end{gather*}
where 
\begin{gather*}
    D(L)=\{f\in L^2(\mathbb{R};W):f\in D(\mathcal{L}),\mathcal{L}f\in L^2(\mathbb{R};W)\}\\
    =\left\{\begin{bmatrix}
    f_1  \\
    f_2 
    \end{bmatrix}:f_1\in W^{1,1}_{loc}(\mathbb{R}),f_1, \frac{df_1}{dt}, f_2\in L^2(\mathbb{R}) \right\},\\
    D(L_0')=\{f\in D(L):Jf \text{ has compact support contained in the interior of }\mathbb{R}\}\\
    =\left\{\begin{bmatrix}
    f_1  \\
    f_2 
    \end{bmatrix}:f_2\in L^2(\mathbb{R}), f_1\in W^{1,1}_{loc}(\mathbb{R}), f_1 \text{ has compact support}\right\}.
\end{gather*}

Let us now prove that $L$ has only one eigenvalue, namely, $\lambda=0$, and it is an eigenvalue of infinite multiplicity. Let $\lambda\in \mathbb{C}$, $\lambda\not=0$. Then
\begin{gather*}
    \mathcal{L}f=\lambda f \Leftrightarrow f=\begin{bmatrix}
    f_1  \\
    f_2 
    \end{bmatrix},f_2=0, f_1\in W^{1,1}_{loc}(\mathbb{R}), \frac{df_1}{dt}=-i\lambda f_1\\
    \Leftrightarrow f=\begin{bmatrix}
    f_1  \\
    f_2 
    \end{bmatrix}, f_2=0, f_1(t)=ce^{-i\lambda t}, c\in \mathbb{C}
\end{gather*}
from which it follows that
\begin{gather*}
    Lf=\lambda f \Leftrightarrow f=0.
\end{gather*}
This proves that if $\lambda\not=0$ then $\lambda$ is not an eigenvalue of $L$. Next, we will prove $\lambda=0$ is an eigenvalue of $L$ of infinite multiplicity. First,
\begin{gather*}
    \mathcal{L}f=0 \Leftrightarrow f=\begin{bmatrix}
    f_1  \\
    f_2 
    \end{bmatrix},f_2\in \mathcal{M}(\mathbb{R}), f_1\in W^{1,1}_{loc}(\mathbb{R}), \frac{df_1}{dt}=0\\
    \Leftrightarrow f=\begin{bmatrix}
    f_1  \\
    f_2 
    \end{bmatrix}, f_2\in \mathcal{M}(\mathbb{R}), f_1(t)=c, c\in \mathbb{C} \text{ (i.e., $f_1$ is a constant function)}
\end{gather*}
from which it follows that
\begin{gather*}
    Lf=0\Leftrightarrow f=\begin{bmatrix}
    f_1  \\
    f_2 
    \end{bmatrix}, f_2,f_1\in L^2(\mathbb{R}), f_1(t)=c, c\in \mathbb{C}
    \Leftrightarrow \begin{bmatrix}
    0 \\
    f_2 
    \end{bmatrix}, f_2\in L^2(\mathbb{R}),
\end{gather*}
which proves that $\lambda=0$ is an eigenvalue of $L$ with infinite multiplicity since its eigenspace $E_0$, i.e.,
\begin{gather*}
    E_{0}=\left\{\begin{bmatrix}
    0 \\
    f_2 
    \end{bmatrix}: f_2\in L^2(\mathbb{R})\right\},
\end{gather*}
is infinite dimensional.

Our final goal here is to prove that $L$ is self-adjoint by proving that the hypotheses of Theorem \ref{thm:MainResultOfThePaper} are satisfied (which also then shows that $L_0'$ is essentially self-adjoint with closure $L$). To do this, let $z_0\in \mathbb{C}$. We now want to consider whether or not the local index-$1$ hypotheses (see Def.\ \ref{DefIndex1Hyp}) are true for $H-z_0W, W$ with respect to $J$ on the interval $\mathbb{R}$, where in this example
\begin{gather*}
    H-z_0W=-z_0I_2.
\end{gather*}
First, the unitary matrix $$V=I_2\in M_2(\mathbb{C}),$$ already has $$J=V^{-1}JV=[J_{ij}]_{i,j=1,2}$$ in $2\times2$ block partitioned matrix form as (\ref{BlockStructJ}) with
\begin{align*}
    J_{ij}\in M_{n_i\times n_j}(\mathbb{C}), i,j=1,2;\;\;J_{11}=iI_1,\; J_{ij}=0,\;\;(i,j)\not = (1,1),\;\;\det(J_{11})\neq 0,
\end{align*}
where $I_1=[1]$ is the $1\times 1$ identity matrix and $n_1,n_2\in \mathbb{N}$ are defined by
\begin{align*}
    n_1:=\operatorname{rank} J=1,\;\;n_2:=\dim \ker J=\operatorname{nullity}(J)=2-n_1=1.
\end{align*}
Furthermore, both of the matrices $H=V^{-1}HV=[H_{ij}]_{i,j=1,2}, W=V^{-1}WV=[W_{ij}]_{i,j=1,2}\in M_{2}(\mathcal{M}(\mathbb{R}))$ are block partitioned already conformal to the block structure of $V^{-1}JV$ in (\ref{BlockStructJ}), where
\begin{gather*}
    H_{ij}=0, W_{ij}\in M_{n_i\times n_j}(\mathcal{M}(\mathbb{R})),\;\;i,j=1,2;\;W_{11}=W_{22}=I_1,\;W_{12}=W_{21}=0.
\end{gather*}

Next, it follows from this that the local index-$1$ hypotheses (see Def.\ \ref{DefIndex1Hyp}) for $H-z_0W, W$ with respect to $J$ on the interval $\mathbb{R}$ are not satisfied if $z_0=0$ since $H_{22}=0$ is not invertible, but are satisfied if $z_0\not =0$ since
\begin{gather*}
    \frac{-1}{z_0}I_1=(H-z_0W)_{22}^{-1}\in M_{n_2}(\mathcal{M}(\mathbb{R})),\\
    0=(H-z_0W)_{12}(H-z_0W)_{22}^{-1}W_{22}^{1/2}\in M_{n_1\times n_2}(L^{2}_{loc}(\mathbb{R})),\\
    -z_0I_1=(H-z_0W)/(H-z_0W)_{22}, W_{11}=I_1\in M_{n_1}(L^1_{loc}(\mathbb{R})),\\
    \frac{-1}{z_0}I_1=W_{22}^{1/2}(H-z_0W)_{22}^{-1}W_{22}^{1/2}\in M_{n_2}(L^{\infty}_{loc}(\mathbb{R})),\\
    0=W_{22}^{1/2}(W_{22}^{-1}W_{21}-H_{22}^{-1}H_{21})\in M_{n_2\times n_1}(L^2_{loc}(\mathbb{R})),\\
    I_1=W/W_{22}\in M_{n_1}(L^1_{loc}(\mathbb{R})).
\end{gather*}
It follows from this that the local index-$1$ hypotheses (see Def.\ \ref{DefIndex1Hyp}) for $H-z_0W, W$ with respect to $J$ on the interval $\mathbb{R}$ are true in this example if and only if $z_0\in\mathbb{C}\setminus\{0\}$. In particular, we can take $z_0=i$ from which it follows in this example that $L$ is self-adjoint by Theorem \ref{thm:MainResultOfThePaper}, has $\lambda=0$ as its only eigenvalue, and this eigenvalue has infinite multiplicity with eigenspace $E_0$ above.
\end{example}

\subsection{\label{subsec:SimplifyIndex1Hyp}Simplifying the local index-1 hypotheses}
As Theorem \ref{thm:MainResultOfThePaper} is the main result of this paper, we would like to find simpler hypotheses that are sufficient for those hypotheses in Theorem \ref{thm:MainResultOfThePaper} to be true. The next proposition and the theorem that follows are our main results in this regard. First, we begin with a lemma.
\begin{lemma}\label{lem:WeightedResolventLpProperties}
If (\ref{NewHyp0PeriodicSetting}) is satisfied and $z_0\in\mathbb{C}\setminus\mathbb{R}$, then $(W^{-1/2}HW^{-1/2}-z_0I_{n})^{-1}$ and $H-z_0W$ are invertible for a.e.\ $t\in \mathbb{R}$ and
    \begin{gather}
   W^{1/2}(H-z_0W)^{-1}W^{1/2}=(W^{-1/2}HW^{-1/2}-z_0I_{n})^{-1}\in M_{n}(L^{\infty}_{loc}(\mathbb{R})),\\
        (H-z_0W)^{-1}=W^{-1/2}(W^{-1/2}HW^{-1/2}-z_0I_{n})^{-1}W^{-1/2}\in M_{n}(\mathcal{M}(\mathbb{R})).
    \end{gather}
Furthermore, if $\det J =0$ then $(W^{-1/2}HW^{-1/2}-z_0I_{n})^{-1}$ and $(H-z_0W)_{22}$ are invertible for a.e.\ $t\in \mathbb{R}$ and
    \begin{gather}
    W_{22}^{1/2}(H-z_0W)_{22}^{-1}W_{22}^{1/2}=(W_{22}^{-1/2}H_{22}W_{22}^{-1/2}-z_0I_{n_2})^{-1}\in M_{n_2}(L^{\infty}_{loc}(\mathbb{R})),\\
        (H-z_0W)_{22}^{-1}=(H_{22}-z_0W_{22})^{-1}\\
        =W_{22}^{-1/2}(W_{22}^{-1/2}H_{22}W_{22}^{-1/2}-z_0I_{n_2})^{-1}W_{22}^{-1/2}\in M_{n_2}(\mathcal{M}(\mathbb{R})).\notag
    \end{gather}
\end{lemma}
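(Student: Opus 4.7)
The plan is to reduce the lemma to the elementary spectral theory of Hermitian matrices via the factorization
\begin{align*}
    H(t)-z_0W(t)=W(t)^{1/2}\left(W(t)^{-1/2}H(t)W(t)^{-1/2}-z_0I_n\right)W(t)^{1/2},
\end{align*}
valid for a.e.\ $t\in\mathbb{R}$ by Lemma \ref{LemWInvWSqrtProperties}. First I would note that hypothesis (\ref{HypPeriodicityH}) upgrades the assumption $H(t)^*=H(t)$ for a.e.\ $t\in(0,d)$ to a.e.\ $t\in\mathbb{R}$; combined with $W(t)^*=W(t)>0$ a.e., this makes the conjugated matrix $A(t):=W(t)^{-1/2}H(t)W(t)^{-1/2}$ Hermitian for a.e.\ $t$.

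Next, because $A(t)$ is Hermitian its spectrum lies in $\mathbb{R}$, so for any $z_0\in\mathbb{C}\setminus\mathbb{R}$ the matrix $A(t)-z_0I_n$ is invertible a.e.; the spectral theorem for Hermitian matrices yields the uniform (not merely local) essential bound $\|(A(t)-z_0I_n)^{-1}\|\leq 1/|\operatorname{Im} z_0|$ a.e. The displayed factorization then shows $H-z_0W$ is a.e.\ invertible with
\begin{align*}
    (H-z_0W)^{-1}=W^{-1/2}(A-z_0I_n)^{-1}W^{-1/2},
\end{align*}
equivalently $W^{1/2}(H-z_0W)^{-1}W^{1/2}=(A-z_0I_n)^{-1}$, which is the main algebraic content of the first claim.

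For the function-space assertions I plan to invoke Lemma \ref{LemWInvWSqrtProperties} to get $W^{\pm 1/2}\in M_n(\mathcal{M}(\mathbb{R}))$; then $A\in M_n(\mathcal{M}(\mathbb{R}))$ as a product of measurable matrix-valued functions, and its a.e.\ pointwise inverse is measurable (via Cramer's rule, since $\det(A-z_0I_n)$ is measurable and a.e.\ nonzero). The pointwise operator-norm bound of the previous paragraph then places $(A-z_0I_n)^{-1}$ into $M_n(L^{\infty}_{loc}(\mathbb{R}))$ (indeed into $M_n(L^{\infty}(\mathbb{R}))$), and multiplying by $W^{\pm 1/2}$ on either side yields the claimed identities together with $(H-z_0W)^{-1}\in M_n(\mathcal{M}(\mathbb{R}))$.

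The block statement under $\det J=0$ follows by running exactly the same argument on the $(2,2)$ block: unitarity of $V$ gives $(V^{-1}HV)^*=V^{-1}HV$, so $H_{22}$ is Hermitian, while $W_{22}$ is Hermitian positive definite a.e.\ as a diagonal block of the positive definite matrix $V^{-1}WV$ (as already used in Lemma \ref{LemWSchurComplProperties}). This lets me apply Lemma \ref{LemWInvWSqrtProperties} to $W_{22}$ to produce measurable $W_{22}^{\pm 1/2}$, and the previous argument then runs verbatim at the block level with $H_{22}, W_{22}, n_2$ in place of $H, W, n$. I do not anticipate a serious obstacle; the only nontrivial input is the Hermitian-matrix resolvent bound $\|(A-z_0I_n)^{-1}\|\leq 1/|\operatorname{Im} z_0|$, which supplies the key $L^{\infty}_{loc}$ control automatically.
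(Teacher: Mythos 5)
Your proposal is correct and follows essentially the same route as the paper's proof: the factorization $H-z_0W=W^{1/2}(W^{-1/2}HW^{-1/2}-z_0I_n)W^{1/2}$, the Hermitian resolvent bound $\|(A-z_0I_n)^{-1}\|\le 1/|\operatorname{Im}z_0|$ (the paper derives it by a direct Cauchy--Schwarz estimate rather than citing the spectral theorem, which is an immaterial difference), measurability of the inverse via the adjugate/Cramer's rule, and the identical block-level argument using $(H-z_0W)_{22}=H_{22}-z_0W_{22}$ together with Lemmas \ref{LemWInvWSqrtProperties} and \ref{LemWSchurComplProperties}.
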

\begin{proof}
Suppose (\ref{NewHyp0PeriodicSetting}) is satisfied. Let $z_0\in\mathbb{C}\setminus\mathbb{R}$. Now recall, for any $A\in M_m(\mathbb{C})$ such that $A^*=A$, it follows that for any $x\in \mathbb{C}^m$,
\begin{gather*}
    ||(A-z_0I_m)x||||x||\geq |\langle x, (A-z_0I_m)x\rangle|\geq |\operatorname{Im}\langle x, (A-z_0I_m)x\rangle|\\
    =|\langle x, \operatorname{Im}(A-z_0I_m)x\rangle|=|\operatorname{Im}(z_0)||x||^2
\end{gather*}
which simultaneously proves that $A-z_0I_m$ is invertible and that
\begin{gather*}
    ||(A-z_0I_m)^{-1}||\leq \frac{1}{|\operatorname{Im}(z_0)|}.
\end{gather*}
Next, if $C,B\in M_m(\mathbb{C})$ such that $C^*=C$ and $B$ is positive semidefinite and invertible then $A:=B^{-1/2}CB^{-1/2}$ satisfies $A^*=A$ and
\begin{align}
    C-z_0B=B^{1/2}(B^{-1/2}CB^{-1/2}-z_0I_m)B^{1/2}=B^{1/2}(A-z_0I_m)B^{1/2}
\end{align}
so that $C-z_0B$ is invertible with
\begin{align}
    (C-z_0B)^{-1}=B^{-1/2}(A-z_0I_m)^{-1}B^{-1/2},\;(A-z_0I_m)^{-1}=B^{1/2}(C-z_0B)^{-1}B^{1/2}.
\end{align}
Finally, recall that if $D\in M_m(\mathbb{C})$ is any invertible matrix then, in terms of its determinant $\det D$ and its adjugate $\operatorname{adj}D$ (i.e., the transpose of the cofactor matrix of $D$), the inverse formula holds:
\begin{align*}
    D^{-1}=\frac{1}{\det D}\operatorname{adj}D.
\end{align*}
The proof of this lemma now follows immediately from these elementary facts and the hypotheses on $H,W$ along with the equality $(H-z_0W)_{22}=H_{22}-z_0W_{22}$ in $M_{n_2}(\mathcal{M}(\mathbb{R}))$ in the case $\det J=0$.
\end{proof}

The next proposition gives simpler conditions, which are equivalent to the hypotheses of Theorem \ref{thm:MainResultOfThePaper}.
\begin{proposition}\label{prop:MainResultOfPaperOnIndex1HypEquiv}
Suppose (\ref{NewHyp0PeriodicSetting}) is satisfied. Then the following statements are true:
\begin{itemize}
    \item[(a)] If $\det J\not=0$ then $H,W\in M_{n}(L^{1}_{loc}(\mathbb{R}))$ if and only if the local index-1 hypotheses (see Def.\ \ref{DefIndex1Hyp}) are true for $H-z_0W, W$ with respect to $J$ on the interval $\mathbb{R}$, for some $z_0\in\mathbb{C}\setminus\mathbb{R}$ (in which case, its true for every $z_0\in\mathbb{C}$).
    \item[(b)] If $\det J=0$ and $z_0\in\mathbb{C}\setminus\mathbb{R}$ then the local index-1 hypotheses (see Def.\ \ref{DefIndex1Hyp}) are true for $H-z_0W, W$ with respect to $J$ on the interval $\mathbb{R}$ if and only if the following conditions are satisfied:
    \begin{gather}
    W_{11}\in M_{n_1}(L^1_{loc}(\mathbb{R})),\label{EquivLocIndex1CondForz0AndConjugate1}\\
     H_{11}-H_{12}(H_{22}-z_0W_{22})^{-1}H_{21}\in M_{n_1}(L^1_{loc}(\mathbb{R}))\label{EquivLocIndex1CondForz0AndConjugate2}.
\end{gather}
\end{itemize}
Moreover, the statements (a) and (b) are still true if we replace $L^1_{loc}(\mathbb{R})$ by $L^1([0,d])$.
\end{proposition}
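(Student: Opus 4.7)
The argument splits naturally into (a), which is immediate, and (b), which requires the bulk of the work. For part (a), when $\det J\neq 0$ the local index-1 hypothesis \eqref{HypIndex1LocalPartNeg1} for $H-z_0W, W$ simply reads $H-z_0W, W\in M_n(L^1_{loc}(\mathbb{R}))$; since $z_0W\in M_n(L^1_{loc}(\mathbb{R}))$ whenever $W$ is, this is equivalent to $H, W\in M_n(L^1_{loc}(\mathbb{R}))$ with no dependence on $z_0$.

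For part (b) the plan is to apply Lemma \ref{lem:SimplifiedIndex1Hyp} to $H-z_0 W, W$ (with respect to $J$) and match the reduced conditions against (I) $W_{11}\in M_{n_1}(L^1_{loc}(\mathbb{R}))$ and (II) $H_{11}-H_{12}(H_{22}-z_0W_{22})^{-1}H_{21}\in M_{n_1}(L^1_{loc}(\mathbb{R}))$. Since $(H-z_0W)_{22}=H_{22}-z_0W_{22}$, Lemma \ref{lem:WeightedResolventLpProperties} together with \eqref{NewHyp0PeriodicSetting} makes the conditions requiring $(H_{22}-z_0W_{22})^{-1}\in \mathcal{M}(\mathbb{R})$ and $W_{22}^{1/2}(H_{22}-z_0W_{22})^{-1}W_{22}^{1/2}\in L^{\infty}_{loc}(\mathbb{R})$ automatic. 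Moreover, hypothesis (I) and the factorization $W_{12}W_{22}^{-1}W_{21}=(W_{22}^{-1/2}W_{21})^*(W_{22}^{-1/2}W_{21})\leq W_{11}$ (in Loewner order, cf.\ Lemma \ref{LemWSchurComplProperties}) yield $W_{22}^{-1/2}W_{21}\in M_{n_2\times n_1}(L^2_{loc}(\mathbb{R}))$, $W_{12}W_{22}^{-1/2}\in M_{n_1\times n_2}(L^2_{loc}(\mathbb{R}))$, and $W/W_{22}\in M_{n_1}(L^1_{loc}(\mathbb{R}))$.

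The crux is to extract the two $L^2_{loc}$ conditions for $H-z_0W, W$ (the analogues of \eqref{HypIndex1LocalPart2} and \eqref{HypIndex1LocalPart1}) from (II). Setting
\begin{gather*}
B_{z_0}:=H_{12}(H_{22}-z_0W_{22})^{-1}W_{22}^{1/2},\qquad C_{z_0}:=W_{22}^{1/2}(H_{22}-z_0W_{22})^{-1}H_{21},
\end{gather*}
a direct resolvent calculation gives
\begin{gather*}
H_{12}(H_{22}-z_0W_{22})^{-1}H_{21}-H_{12}(H_{22}-\overline{z_0}W_{22})^{-1}H_{21}=2i\operatorname{Im}(z_0)\,B_{z_0}B_{z_0}^*,
\end{gather*}
while $(H_{11}-H_{12}(H_{22}-z_0W_{22})^{-1}H_{21})^*=H_{11}-H_{12}(H_{22}-\overline{z_0}W_{22})^{-1}H_{21}$ because $H_{11}^*=H_{11}$ (inherited from \eqref{NewHyp0PeriodicSetting} by unitarity of $V$). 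Therefore (II) implies $\operatorname{Im}(z_0)\,B_{z_0}B_{z_0}^*\in M_{n_1}(L^1_{loc}(\mathbb{R}))$; since $\operatorname{Im}(z_0)\neq 0$ and $B_{z_0}B_{z_0}^*$ is positive semidefinite, its diagonal entries $\sum_k|(B_{z_0})_{ik}|^2$ lie in $L^1_{loc}(\mathbb{R})$, whence $B_{z_0}\in M_{n_1\times n_2}(L^2_{loc}(\mathbb{R}))$. The same identity with $\overline{z_0}$ in place of $z_0$ (valid because (II) is invariant under the involution combining $z_0\mapsto\overline{z_0}$ and matrix adjoint) produces $C_{z_0}^*C_{z_0}=B_{\overline{z_0}}B_{\overline{z_0}}^*\in M_{n_1}(L^1_{loc}(\mathbb{R}))$, hence $C_{z_0}\in M_{n_2\times n_1}(L^2_{loc}(\mathbb{R}))$. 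The residual discrepancy $(H-z_0W)_{ij}-H_{ij}=-z_0W_{ij}$ in the target conditions is absorbed by Holder's inequality using the $L^{\infty}_{loc}$ bound on $W_{22}^{1/2}(H_{22}-z_0W_{22})^{-1}W_{22}^{1/2}$ together with the $L^2_{loc}$ factors above.

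To close the equivalence I match the Schur-complement condition \eqref{HypIndex1LocalPart3} for $H-z_0W, W$ against (II) by the expansion
\begin{gather*}
(H-z_0W)/(H-z_0W)_{22}=(H_{11}-z_0W_{11})-(H_{12}-z_0W_{12})(H_{22}-z_0W_{22})^{-1}(H_{21}-z_0W_{21}),
\end{gather*}
whose cross-terms $z_0W_{11}$, $z_0H_{12}(H_{22}-z_0W_{22})^{-1}W_{21}$, $z_0W_{12}(H_{22}-z_0W_{22})^{-1}H_{21}$, and $z_0^2W_{12}(H_{22}-z_0W_{22})^{-1}W_{21}$ all belong to $M_{n_1}(L^1_{loc}(\mathbb{R}))$ by Holder applied to the $L^2_{loc}$ and $L^{\infty}_{loc}$ factors established above; so $(H-z_0W)/(H-z_0W)_{22}\in L^1_{loc}$ if and only if $H_{11}-H_{12}(H_{22}-z_0W_{22})^{-1}H_{21}\in L^1_{loc}$. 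The passage from $L^1_{loc}(\mathbb{R})$ to $L^1([0,d])$ in the final assertion is immediate from the $d$-periodicity of all quantities in sight, which follows from \eqref{HypPeriodicityH} since $V$ is constant and $(H_{22}-z_0W_{22})^{-1}$ is a.e.\ $d$-periodic by uniqueness of matrix inverses. The main obstacle is the imaginary-part step: both the positivity of $B_{z_0}B_{z_0}^*$ and the assumption $z_0\notin\mathbb{R}$ are essential, which is exactly why $z_0\in\mathbb{C}\setminus\mathbb{R}$ cannot be relaxed in part (b).
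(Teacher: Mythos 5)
Your proposal is correct and follows essentially the same route as the paper's proof: reduction via Lemma \ref{lem:SimplifiedIndex1Hyp} and Lemma \ref{lem:WeightedResolventLpProperties}, H\"older absorption of the $-z_0W_{12}$, $-z_0W_{21}$ discrepancies, the Schur-complement expansion with cross terms in $L^1_{loc}$, and recovery of the two $L^2_{loc}$ conditions from (\ref{EquivLocIndex1CondForz0AndConjugate2}) by the imaginary-part argument (your resolvent-difference identity is exactly the paper's $\operatorname{Im}[H_{11}-H_{12}(H_{22}-z_0W_{22})^{-1}H_{21}]$ computation, giving $-(\operatorname{Im}z_0)B_{z_0}B_{z_0}^*$ and $-(\operatorname{Im}z_0)C_{z_0}^*C_{z_0}$), followed by the periodicity observation for the $L^1([0,d])$ version. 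Your part (a) is in fact slightly more direct than the paper's, since $W\in M_n(L^1_{loc}(\mathbb{R}))$ is already part of the hypothesis when $\det J\neq 0$, but both arguments are valid.
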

\begin{proof}
Suppose (\ref{NewHyp0PeriodicSetting}) is satisfied. We begin by proving (a). First, suppose $H,W\in M_{n}(L^{1}_{loc}(\mathbb{R}))$. Let $z_0\in \mathbb{C}$. Then we have $H-z_0W\in M_{n}(L^{1}_{loc}(\mathbb{R}))$. Hence, if $\det J\not=0$ then it follows immediately from  Def.\ \ref{DefIndex1Hyp} that the local index-1 hypotheses (see Def.\ \ref{DefIndex1Hyp}) are true for $H-z_0W, W$ with respect to $J$ on the interval $\mathbb{R}$. Conversely, suppose $\det J\not=0$ and the local index-1 hypotheses (see Def.\ \ref{DefIndex1Hyp}) are true for $H-z_0W, W$ with respect to $J$ on the interval $\mathbb{R}$, for some $z_0\in\mathbb{C}\setminus\mathbb{R}$. Then $\operatorname{Im}z_0\not=0$ and $H-z_0W,H-\overline{z_0}W=(H-z_0W)^*\in M_{n}(L^{1}_{loc}(\mathbb{R}))$. Hence, 
\begin{align*}
    W=\frac{1}{\operatorname{Im}z_0}\frac{1}{2i}[(H-\overline{z_0}W)-(H-z_0W)]\in M_{n}(L^{1}_{loc}(\mathbb{R})) 
\end{align*}
which implies $H=(H-z_0W)+z_0W\in M_{n}(L^{1}_{loc}(\mathbb{R})).$ This proves statement (a).

We now prove statement (b). Assume $\det J=0$ and let $z_0\in\mathbb{C}\setminus\mathbb{R}$. Then the local index-1 hypotheses (see Def.\ \ref{DefIndex1Hyp} for $H-z_0W, W$ with respect to $J$ on the interval $\mathbb{R}$ by Lemma \ref{lem:SimplifiedIndex1Hyp} and Lemma \ref{lem:WeightedResolventLpProperties} are equivalent to the following:
\begin{gather}
    (H-z_0W)/(H-z_0W)_{22}, W_{11}\in M_{n_1}(L^1_{loc}(\mathbb{R})),\\
    (H-z_0W)_{12}(H-z_0W)_{22}^{-1}W_{22}^{1/2}\in M_{n_1\times n_2}(L^{2}_{loc}(\mathbb{R})),\\
    W_{22}^{1/2}(H-z_0W)_{22}^{-1}(H-z_0W)_{21}\in M_{n_2\times n_1}(L^2_{loc}(\mathbb{R})).
\end{gather}
Considering this even further, using Holder's inequality and the facts that $(H-z_0W)_{12}=H_{12}-z_0W_{12}, (H-z_0W)_{21}=H_{21}-z_0W_{21},$ and $W_{22}^{1/2}(H-z_0W)_{22}^{-1}W_{22}^{1/2}\in M_{n_2}(L^{\infty}_{loc}(\mathbb{R}))$ (by Lemma \ref{lem:WeightedResolventLpProperties}), and $W_{22}^{-1/2}W_{21}\in M_{n_2\times n_1}(L^2_{loc}(I))$ (see the proof of Lemma \ref{lem:SimplifiedIndex1Hyp}), we see that those conditions are equivalent to the following:
\begin{gather}
    (H-z_0W)/(H-z_0W)_{22}, W_{11}\in M_{n_1}(L^1_{loc}(\mathbb{R})),\\
    H_{12}(H-z_0W)_{22}^{-1}W_{22}^{1/2}\in M_{n_1\times n_2}(L^{2}_{loc}(\mathbb{R})),\\
   W_{22}^{1/2}(H-z_0W)_{22}^{-1}H_{21}\in M_{n_2\times n_1}(L^{2}_{loc}(\mathbb{R})).
\end{gather}
Therefore, by expanding
\begin{gather*}
    (H-z_0W)/(H-z_0W)_{22}=(H-z_0W)_{11}-(H-z_0W)_{12}(H-z_0W)_{22}^{-1}(H-z_0W)_{21}\\
    =H_{11}-z_0W_{11}-H_{12}(H-z_0W)_{22}^{-1}H_{21}\\
    +z_0H_{12}(H-z_0W)_{22}^{-1}W^{1/2}_{22}W^{-1/2}_{22}W_{21}
    +z_0W_{12}W^{-1/2}_{22}W^{1/2}_{22}(H-z_0W)_{22}^{-1}H_{21}\\
    -z_0^2W_{12}W^{-1/2}_{22}W^{1/2}_{22}(H-z_0W)_{22}^{-1}W^{1/2}_{22}W^{-1/2}_{22}W_{21},
\end{gather*}
we see that those conditions are equivalent to the following conditions: \begin{gather}
    W_{11}\in M_{n_1}(L^1_{loc}(\mathbb{R})),\\
    H_{12}(H_{22}-z_0W_{22})^{-1}W_{22}^{1/2}\in M_{n_1\times n_2}(L^{2}_{loc}(\mathbb{R})),\\
    W_{22}^{1/2}(H_{22}-z_0W_{22})^{-1}H_{21}\in M_{n_2\times n_1}(L^{2}_{loc}(\mathbb{R})),\\
    H_{11}-H_{12}(H_{22}-z_0W_{22})^{-1}H_{21}\in M_{n_1}(L^1_{loc}(\mathbb{R})).
\end{gather}
Now considering the imaginary part of the last matrix function [recall, for a matrix $A\in M_{m}(\mathbb{C}),$ the imaginary part of $A$ is defined by $\operatorname{Im}A=\frac{1}{2i}(A-A^*)$ and if $A$ is invertible then $\operatorname{Im}(A^{-1})=\operatorname{Im}(A^{-1}A^*(A^{-1})^*)=-A^{-1}(\operatorname{Im}A)(A^{-1})^*$ and $\operatorname{Im}(A^{-1})=\operatorname{Im}((A^{-1})^*A^*A^{-1})=-(A^{-1})^*(\operatorname{Im}A)A^{-1}$] we find that
\begin{gather}
    \hspace{-2em}\operatorname{Im}[ H_{11}-H_{12}(H_{22}-z_0W_{22})^{-1}H_{21}]=-(\operatorname{Im}z_0)H_{12}(H_{22}-z_0W_{22})^{-1}W_{22}[(H_{22}-z_0W_{22})^{-1}]^*H_{21}\\
    =-(\operatorname{Im}z_0)H_{12}(H_{22}-z_0W_{22})^{-1}W_{22}^{1/2}[H_{12}(H_{22}-z_0W_{22})^{-1}W_{22}^{1/2}]^*
\end{gather}
and
\begin{gather}
   \hspace{-2em}\operatorname{Im}[ H_{11}-H_{12}(H_{22}-z_0W_{22})^{-1}H_{21}]=-(\operatorname{Im}z_0)H_{12}[(H_{22}-z_0W_{22})^{-1}]^*W_{22}(H_{22}-z_0W_{22})^{-1}H_{21}\\
   =-(\operatorname{Im}z_0)[W_{22}^{1/2}(H_{22}-z_0W_{22})^{-1}H_{21}]^*W_{22}^{1/2}(H_{22}-z_0W_{22})^{-1}H_{21},
\end{gather}
and we see from this that those conditions are equivalent to the conditions (\ref{EquivLocIndex1CondForz0AndConjugate1})and (\ref{EquivLocIndex1CondForz0AndConjugate2}). This completes the proof of statement (b).

Finally, the statements (a) and (b) are still true if we replace $L^1_{loc}(\mathbb{R})$ by $L^1([0,d])$. The reason is that since $H,W$ are $d$-periodic functions, then $H,W\in M_{n}(L^{1}_{loc}(\mathbb{R}))$ if and only if $H,W\in M_{n}(L^{1}([0,d])$. Similarly, due to the $d$-periodicity, (\ref{EquivLocIndex1CondForz0AndConjugate1}) and (\ref{EquivLocIndex1CondForz0AndConjugate2}) are true if and only if $W_{11}, H_{11}-H_{12}(H_{22}-z_0W_{22})^{-1}H_{21}\in M_{n_1}(L^1([0,d]))$. This completes the proof of the proposition.
\end{proof}

The next theorem gives very simple conditions which are sufficient for the hypotheses in Theorem \ref{thm:MainResultOfThePaper} to be true.
\begin{theorem}\label{thm:StrongerIndex1Hyp}
Suppose (\ref{NewHyp0PeriodicSetting}) is satisfied and $\det J=0$. If
\begin{gather}
    H_{11},W_{11}, H_{12}W_{22}^{-1}H_{21}\in M_{n_1}(L^1_{loc}(\mathbb{R}))\label{StrongerIndex1Hyp}
\end{gather}
then the local index-1 hypotheses are true for $H-z_0W, W$ with respect to $J$ on the interval $\mathbb{R}$, for every $z_0\in\mathbb{C}\setminus \mathbb{R}.$ Moreover, the theorem is still true if we replace $L^1_{loc}(\mathbb{R})$ by $L^1([0,d])$ in (\ref{StrongerIndex1Hyp}).
\end{theorem}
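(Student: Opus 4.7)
The plan is to invoke Proposition \ref{prop:MainResultOfPaperOnIndex1HypEquiv}(b), which reduces the claim to showing the two conditions (\ref{EquivLocIndex1CondForz0AndConjugate1}) and (\ref{EquivLocIndex1CondForz0AndConjugate2}) for each $z_0\in\mathbb{C}\setminus\mathbb{R}$. The first, $W_{11}\in M_{n_1}(L^1_{loc}(\mathbb{R}))$, is already a hypothesis. Since $H_{11}\in M_{n_1}(L^1_{loc}(\mathbb{R}))$ is also hypothesized, the second reduces to proving
\begin{align*}
    H_{12}(H_{22}-z_0W_{22})^{-1}H_{21}\in M_{n_1}(L^1_{loc}(\mathbb{R}))
\end{align*}
for every $z_0\in\mathbb{C}\setminus\mathbb{R}$. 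This will be the only nontrivial task.

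To attack it, I would first apply Lemma \ref{lem:WeightedResolventLpProperties} to factorize
\begin{align*}
    H_{12}(H_{22}-z_0W_{22})^{-1}H_{21}=H_{12}W_{22}^{-1/2}(W_{22}^{-1/2}H_{22}W_{22}^{-1/2}-z_0 I_{n_2})^{-1}W_{22}^{-1/2}H_{21}.
\end{align*}
Setting $A:=H_{12}W_{22}^{-1/2}$ and $K:=W_{22}^{-1/2}H_{22}W_{22}^{-1/2}$, the hypothesis (\ref{NewHyp0PeriodicSetting}) together with the $d$-periodicity gives $H^*=H$ a.e.\ on $\mathbb{R}$, hence $H_{22}^*=H_{22}$ and $H_{21}=H_{12}^*$ a.e., so $K^*=K$ a.e.\ and $W_{22}^{-1/2}H_{21}=A^*$ a.e. The expression therefore becomes $A(K-z_0 I_{n_2})^{-1}A^*$.

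The key structural step is next: the hypothesis $H_{12}W_{22}^{-1}H_{21}=AA^*\in M_{n_1}(L^1_{loc}(\mathbb{R}))$ combined with the pointwise a.e.\ positive semidefiniteness of $AA^*$ forces each diagonal entry $(AA^*)_{ii}=\sum_{k}|A_{ik}|^2$ to lie in $L^1_{loc}(\mathbb{R})$, whence $A\in M_{n_1\times n_2}(L^2_{loc}(\mathbb{R}))$ and $A^*\in M_{n_2\times n_1}(L^2_{loc}(\mathbb{R}))$. Simultaneously, from the elementary resolvent bound used in the proof of Lemma \ref{lem:WeightedResolventLpProperties} (namely $\|(K(t)-z_0I_{n_2})^{-1}\|\leq 1/|\operatorname{Im} z_0|$ a.e.), every entry of $(K-z_0 I_{n_2})^{-1}$ is a.e.\ bounded by $1/|\operatorname{Im} z_0|$. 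Writing out $A(K-z_0 I_{n_2})^{-1}A^*$ entrywise, each entry is then dominated a.e.\ by a finite sum of terms of the form $|A_{ik}|\,|A^*_{lj}|=|A_{ik}|\,|A_{jl}|$, each of which belongs to $L^1_{loc}(\mathbb{R})$ by Cauchy-Schwarz. This yields the required membership, and the proof with $L^1([0,d])$ in place of $L^1_{loc}(\mathbb{R})$ is immediate from the $d$-periodicity of $H,W$ (exactly as at the end of the proof of Proposition \ref{prop:MainResultOfPaperOnIndex1HypEquiv}).

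The only genuine obstacle I anticipate is the step extracting $A\in L^2_{loc}$ from $AA^*\in L^1_{loc}$; once the positive-semidefiniteness of $AA^*$ is used to pass to diagonal entries, the rest is bookkeeping with Cauchy-Schwarz and the uniform resolvent bound.
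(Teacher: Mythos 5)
Your proposal is correct and follows essentially the same route as the paper's proof: reduce via Proposition \ref{prop:MainResultOfPaperOnIndex1HypEquiv}.(b) to conditions (\ref{EquivLocIndex1CondForz0AndConjugate1})--(\ref{EquivLocIndex1CondForz0AndConjugate2}), deduce $H_{12}W_{22}^{-1/2}\in M_{n_1\times n_2}(L^2_{loc}(\mathbb{R}))$ and $W_{22}^{-1/2}H_{21}\in M_{n_2\times n_1}(L^2_{loc}(\mathbb{R}))$ from the hypothesis $H_{12}W_{22}^{-1}H_{21}\in M_{n_1}(L^1_{loc}(\mathbb{R}))$ together with $H^*=H$, and then combine the factorization $H_{12}W_{22}^{-1/2}[W_{22}^{1/2}(H_{22}-z_0W_{22})^{-1}W_{22}^{1/2}]W_{22}^{-1/2}H_{21}$ with the $L^{\infty}_{loc}$ bound of Lemma \ref{lem:WeightedResolventLpProperties} and H\"older's inequality, finishing the $L^1([0,d])$ variant by periodicity. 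Your entrywise Cauchy--Schwarz bookkeeping is just an explicit version of the paper's appeal to H\"older, so there is no substantive difference.
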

\begin{proof}
Assume the hypotheses. Let $z_0\in \mathbb{C}\setminus\mathbb{R}$. To prove the theorem, it suffices by Proposition \ref{prop:MainResultOfPaperOnIndex1HypEquiv}.(b), to show that the conditions (\ref{EquivLocIndex1CondForz0AndConjugate1}) and (\ref{EquivLocIndex1CondForz0AndConjugate2}) are satisfied. First, condition (\ref{EquivLocIndex1CondForz0AndConjugate1}) is true by the hypotheses. Second, by our hypotheses
\begin{align}
    (W_{22}^{-1/2}H_{21})^*W_{22}^{-1/2}H_{21}=H_{12}W_{22}^{-1/2}(H_{12}W_{22}^{-1})^*=H_{12}W_{22}^{-1}H_{21}\in M_{n_1}(L^1_{loc}(\mathbb{R}))
\end{align}
implying $H_{12}W_{22}^{-1/2}\in M_{n_1\times n_2}(L^2_{loc}(\mathbb{R})), W_{22}^{-1/2}H_{21}\in M_{n_2\times n_1}(L^2_{loc}(\mathbb{R}))$. Third, it follows from this, Holder's inequality, and Lemma \ref{lem:WeightedResolventLpProperties} that
\begin{gather}
    H_{11}-H_{12}(H_{22}-z_0W_{22})^{-1}H_{21}\\ =H_{11}-H_{12}W_{22}^{-1/2}[W_{22}^{1/2}(H_{22}-z_0W_{22})^{-1}W_{22}^{1/2}]W_{22}^{-1/2}H_{21}\in M_{n_1}(L^1_{loc}(\mathbb{R})).\notag
\end{gather}
 This shows that the conditions (\ref{EquivLocIndex1CondForz0AndConjugate1}) and (\ref{EquivLocIndex1CondForz0AndConjugate2}) are satisfied which proves the theorem except for the ``Moreover,..." part of this theorem. But this follows from the fact that since $H_{11},W_{11}, H_{12}, W_{22}, H_{21}$ are $d$-periodic functions, then (\ref{StrongerIndex1Hyp}) is true if and only if $H_{11},W_{11},H_{12}W_{22}^{-1}H_{21}\in M_{n_1}(L^{1}([0,d]))$. This completes the proof of the theorem.
\end{proof}

\begin{remark}
As we have just proven, the hypotheses (\ref{StrongerIndex1Hyp}) in Theorem \ref{thm:StrongerIndex1Hyp} imply those of  (\ref{EquivLocIndex1CondForz0AndConjugate1}) and (\ref{EquivLocIndex1CondForz0AndConjugate2}) in Theorem \ref{prop:MainResultOfPaperOnIndex1HypEquiv} for every $z_0\in\mathbb{C}\setminus \mathbb{R}$, provided (\ref{NewHyp0PeriodicSetting}) is satisfied and $\det J=0$. We would like an answer to the following question we leave open: Is the converse is true or not?
\end{remark}

Next, we give an example that shows how useful Theorem \ref{thm:StrongerIndex1Hyp} can be. 
\begin{example}\label{ex:GenerizationOfExMaxOpSelfAdjWithEigenvalueInftMult}
Let $n\in \mathbb{N}$ and $J\in M_n(\mathbb{C})$ with $J^*=-J\not=0$. Consider the $d$-periodic DAEs (for any $d>0$) in canonical form:
\begin{equation*}
       J\frac{d}{dt}f+ Hf=\lambda Wf,
\end{equation*}
where
\begin{gather*}
    H=0,\;W=I_n,
\end{gather*}
that is,
\begin{equation*}
       J\frac{d}{dt}f=\lambda f.
\end{equation*}
The DA operator $\mathcal{L}:D(\mathcal{L})\to [\mathcal{M}(\mathbb{R})]^n$ associated with $\mathbb{R}, J, H, W$ is
\begin{gather*}
 D(\mathcal{L})=\left\{f\in [\mathcal{M}(\mathbb{R})]^n:Jf\in[W^{1,1}_{loc}(\mathbb{R})]^n\right\}\\
    \mathcal{L}f=W^{-1}\left(\frac{d}{dt}Jf+ Hf\right)=\frac{d}{dt}Jf.
\end{gather*}
Next, the Hilbert space $L^2(\mathbb{R};W)$ is just
\begin{align*}
    L^2(\mathbb{R};W)=[L^2(\mathbb{R})]^n,
\end{align*}
and the maximal and minimal operators, $L:D(L)\rightarrow L^2(\mathbb{R};W)$ and $L_0':D(L_0')\rightarrow L^2(\mathbb{R};W)$, generated by $\mathcal{L}$ are
\begin{gather*}
    Lf=\mathcal{L}f=\frac{d}{dt}Jf,\;\;\text{for }f\in D(L),\\
    \;L_0'f=\mathcal{L}f=\frac{d}{dt}Jf,\;\;\text{for }f\in D(L_0'),
\end{gather*}
where 
\begin{gather*}
    D(L)=\{f\in L^2(\mathbb{R};W):f\in D(\mathcal{L}),\mathcal{L}f\in L^2(\mathbb{R};W)\}\\
    =\left\{f\in [L^2(\mathbb{R})]^n:Jf\in[W^{1,1}_{loc}(\mathbb{R})]^n,\frac{d}{dt}Jf\in [L^2(\mathbb{R})]^n \right\},\\
    D(L_0')=\{f\in D(L):Jf \text{ has compact support contained in the interior of }\mathbb{R}\}\\
    =\{f\in D(L):(Jf)(t)=0 \text{ for all $t$ sufficiently large}\}.
\end{gather*}

Let us now compare and contrast the spectral theory for $L$ in the two different possible cases: (i) $\det J\not = 0$; (ii) $\det J= 0$. First, in either case (i) or (ii), it follows immediately from Theorem \ref{thm:StrongerIndex1Hyp} and Proposition \ref{prop:MainResultOfPaperOnIndex1HypEquiv} that the local index-1 hypotheses (see Def.\ \ref{DefIndex1Hyp}) are true for $H-z_0W, W$ with respect to $J$ on the interval $\mathbb{R}$ for every $z_0\in\mathbb{C}\setminus\mathbb{R}$ and hence by Theorem \ref{thm:MainResultOfThePaper} it follows that $L$ is a self-adjoint operator on $[L^2(\mathbb{R})]^n$, $L_0'$ is essentially self-adjoint with closure $L$, and $L$ has no eigenvalues of finite multiplicity. 

We will now prove explicitly that in the case (i) $\det J\not = 0$, the operator $L$ has no eigenvalues, whereas in the case (ii) $\det J = 0$, the operator $L$ has only one eigenvalue, namely, $\lambda=0$ and then we will calculate its eigenspace $E_0$ to show $\dim E_0=\infty$ thereby proving $\lambda=0$ is an eigenvalue of $L$ of infinite multiplicity.

Suppose (i) $\det J\not = 0$. Then 
\begin{align*}
    D(L)=\left\{f\in [L^2(\mathbb{R})]^n:Jf\in[W^{1,1}_{loc}(\mathbb{R})]^n,\frac{d}{dt}Jf\in [L^2(\mathbb{R})]^n \right\}\\
    =\left\{f\in [L^2(\mathbb{R})]^n:f\in[W^{1,1}_{loc}(\mathbb{R})]^n,\frac{d}{dt}f\in [L^2(\mathbb{R})]^n \right\}.
\end{align*}
Hence if $f$ is an eigenvector of $L$ with corresponding eigenvalue $\lambda$ then $f\in D(L)$ and $Lf=\lambda f$ implying $\frac{df}{dt}=\lambda Jf$ and hence $f(t)=e^{t\lambda J}f(0),\;\forall t\in \mathbb{R}$. As $(iJ)^*=iJ$, this implies that $iJ$ has a orthonormal basis of eigenvectors $v_1,\ldots, v_n\in \mathbb{C}^n$ with corresponding eigenvalues $\omega_1,\ldots, \omega_n\in \mathbb{R}$ and there exists $a_1,\ldots, a_n\in\mathbb{C}$ such that $f(0)=\sum_{j=1}^na_jv_j$ implying $f(t)=\sum_{j=1}^na_je^{-it\lambda \omega_j}v_j,\;\forall t\in \mathbb{R}$. As $f\in D(L)$ and is an eigenvector of $L$ then $0<(f,f)<\infty$, but
\begin{gather*}
    (f,f)=\int_{\mathbb{R}}f(t)^*f(t)dt=\sum_{j=1}^n\sum_{k=1}^n\int_{\mathbb{R}}[a_je^{-it\lambda \omega_j}v_j]^*[a_ke^{-it\lambda \omega_k}v_k]dt\\
    =\sum_{a_j\not=0}|a_j|^2\int_{\mathbb{R}}\overline{(e^{-it\lambda \omega_j})}e^{-it\lambda \omega_j}dt\\
    =\sum_{a_j\not=0}|a_j|^2\int_{\mathbb{R}}e^{it\omega_j(\overline{\lambda}-\lambda)}dt\\
    =\sum_{a_j\not=0}|a_j|^2\int_{\mathbb{R}}e^{-i2t\omega_j\operatorname{Im}\lambda}dt
\end{gather*}
and since the integral
\begin{gather*}
    \int_{\mathbb{R}}e^{-i2t\omega\operatorname{Im}\lambda}dt=\lim_{r\rightarrow -\infty, R\rightarrow +\infty}\int_{r}^{R}e^{-i2t\omega\operatorname{Im}\lambda}dt
\end{gather*}
doesn't exist as a finite limit for any $\omega\in\mathbb{R}$, this yields a contradiction that $L$ had an eigenvector. This proves that $L$ has no eigenvalues in the case (i) $\det J\not = 0$.

Suppose (ii) $\det J = 0$. Then since $(iJ)^*=iJ\not=0$, there is a unitary matrix $$V\in M_n(\mathbb{C}),$$ such that $V^{-1}JV=[J_{ij}]_{i,j=1,2}$ is in a $2\times2$ block partitioned matrix form as in (\ref{BlockStructJ}) with
\begin{align*}
    J_{ij}\in M_{n_i\times n_j}(\mathbb{C}), i,j=1,2,\; J_{ij}=0,\;\;(i,j)\not = (1,1),\;\;\det(J_{11})\neq 0,
\end{align*}
where $n_1,n_2\in \mathbb{N}$ are defined by
\begin{align*}
    n_1:=\operatorname{rank} J\geq 1,\;\;n_2:=\dim \ker J=\operatorname{nullity}(J)=n-n_1\geq 1.
\end{align*}
Furthermore, both of the matrices $H=V^{-1}HV=[H_{ij}]_{i,j=1,2}=0, W=V^{-1}WV=[W_{ij}]_{i,j=1,2}=I_n\in M_{n}(\mathcal{M}(\mathbb{R}))$ are block partitioned already conformal to the block structure of $V^{-1}JV$ in (\ref{BlockStructJ}), where
\begin{gather*}
    H_{ij}=0, W_{ij}\in M_{n_i\times n_j}(\mathcal{M}(\mathbb{R})),\;i,j=1,2;\;W_{11}=I_{n_1},\;W_{22}=I_{n_2},\;W_{12}=W_{21}=0.
\end{gather*}
It now follows that
\begin{gather*}
        D(L)=\left\{f\in [L^2(\mathbb{R})]^n:Jf\in[W^{1,1}_{loc}(\mathbb{R})]^n,\frac{d}{dt}Jf\in [L^2(\mathbb{R})]^n \right\}\\
    =\left\{V\begin{bmatrix}
    f_1  \\
    f_2 
    \end{bmatrix}:f_1\in [W^{1,1}_{loc}(\mathbb{R})]^{n_1},f_1, \frac{df_1}{dt}\in [L^2(\mathbb{R})]^{n_1}, f_2\in [L^2(\mathbb{R})]^{n_2} \right\}
\end{gather*}
and
\begin{gather*}
    Lf=\frac{d}{dt}Jf=\frac{d}{dt}JV\begin{bmatrix}
    f_1  \\
    f_2 
    \end{bmatrix}=V\begin{bmatrix}
    J_{11}\frac{df_1}{dt}\\
    0
    \end{bmatrix},\;\text{ for all } f=V\begin{bmatrix}
    f_1  \\
    f_2 
    \end{bmatrix}\in D(L).
\end{gather*}
Suppose that $f$ is an eigenvector of $L$ with corresponding eigenvalue $\lambda$. That is, $0\not=f=V\begin{bmatrix}
    f_1  \\
    f_2 
    \end{bmatrix}\in D(L)$ and
\begin{gather*}
    V\begin{bmatrix}
    J_{11}\frac{df_1}{dt}\\
    0
    \end{bmatrix}=Lf=\lambda f=\lambda V\begin{bmatrix}
    f_1  \\
    f_2 
    \end{bmatrix},
\end{gather*}
that is,
\begin{gather*}
    J_{11}\frac{df_1}{dt}=\lambda f_1,\;\;f_1\in [W^{1,1}_{loc}(\mathbb{R})]^{n_1},f_1, \frac{df_1}{dt}\in [L^2(\mathbb{R})]^{n_1},\\
     \lambda f_2=0,\;f_2\in [L^2(\mathbb{R})]^{n_2}.
\end{gather*}
As $J_{11}^*=-J_{11}\in M_{n_1}(\mathbb{C})$ and $\det J_{11}\not = 0$, it follows from the case (i) we just considered that we must have $\lambda=0$ and it is the only eigenvalue of $L$ with eigenspace $E_0$ given by
\begin{gather*}
    E_0=\left\{V\begin{bmatrix}
    0  \\
    f_2 
    \end{bmatrix}:f_2\in [L^2(\mathbb{R})]^{n_2}\right\}.
\end{gather*}
Moreover, it obvious that $\dim E_0=\infty$. This proves that $L$ has only one eigenvalue, namely, $\lambda=0$ and it is an eigenvalue of $L$ of infinite multiplicity. 
\end{example}



\bibliographystyle{abbrvnat}
\bibliography{mybibliography}

\begin{thebibliography}{65}
\providecommand{\natexlab}[1]{#1}
\providecommand{\url}[1]{\texttt{#1}}
\expandafter\ifx\csname urlstyle\endcsname\relax
  \providecommand{\doi}[1]{doi: #1}\else
  \providecommand{\doi}{doi: \begingroup \urlstyle{rm}\Url}\fi

\bibitem[Abdulhalim(1999)]{99IA}
I.~Abdulhalim.
\newblock Analytic propagation matrix method for linear optics of arbitrary
  biaxial layered media.
\newblock \emph{Journal of Optics A: Pure and Applied Optics}, 1\penalty0
  (5):\penalty0 646, Sep. 1999.
\newblock \doi{10.1088/1464-4258/1/5/311}.

\bibitem[Abdulhalim(2000)]{00IA}
I.~Abdulhalim.
\newblock Analytic propagation matrix method for anisotropic magneto-optic
  layered media.
\newblock \emph{Journal of Optics A: Pure and Applied Optics}, 2\penalty0
  (6):\penalty0 557, Nov. 2000.
\newblock \doi{10.1088/1464-4258/2/6/310}.

\bibitem[Arov and Dym(2012)]{12AD}
D.~Arov and H.~Dym.
\newblock \emph{Bitangential Direct and Inverse Problems for Systems of
  Integral and Differential Equations}.
\newblock Encyclopedia of Mathematics and its Applications. Cambridge
  University Press, 2012.
\newblock \doi{10.1017/CBO9781139093514}.

\bibitem[Atkinson(1964)]{64VA}
F.~V. Atkinson.
\newblock \emph{Discrete and Continuous Boundary Problems}.
\newblock Academic Press, 1964.

\bibitem[Behrndt et~al.(2020)Behrndt, Hassi, and De~Snoo]{20BS}
J.~Behrndt, S.~Hassi, and H.~De~Snoo.
\newblock \emph{Boundary value problems, Weyl functions, and differential
  operators}.
\newblock Springer Nature, 2020.
\newblock \doi{10.1007/978-3-030-36714-5}.

\bibitem[Ben-Israel and Greville(2003)]{03BG}
A.~Ben-Israel and T.~Greville.
\newblock \emph{Generalized Inverses: Theory and Applications}.
\newblock Springer, 2nd edition, 2003.

\bibitem[Berreman(1972)]{72DW}
D.~W. Berreman.
\newblock Optics in stratified and anisotropic media: 4{\texttimes}4-matrix
  formulation.
\newblock \emph{J. Opt. Soc. Am.}, 62\penalty0 (4):\penalty0 502--510, Apr.
  1972.
\newblock \doi{10.1364/JOSA.62.000502}.

\bibitem[Cassier et~al.(2016)Cassier, Welters, and Milton]{16CW}
M.~Cassier, A.~Welters, and G.~W. Milton.
\newblock Analyticity of the {D}irichlet-to-{N}eumann map for the time-harmonic
  {M}axwell’s equations.
\newblock In G.~W. Milton, editor, \emph{Extending the theory of composites to
  other areas of science}, chapter~4, pages 95--121. Milton-Patton Publishers,
  2016.

\bibitem[Cortes~Garcia et~al.(2019)Cortes~Garcia, Sch{\"o}ps, De~Gersem, and
  Baumanns]{19GS}
I.~Cortes~Garcia, S.~Sch{\"o}ps, H.~De~Gersem, and S.~Baumanns.
\newblock Systems of differential algebraic equations in computational
  electromagnetics.
\newblock In S.~Campbell, A.~Ilchmann, V.~Mehrmann, and T.~Reis, editors,
  \emph{Applications of Differential-Algebraic Equations: Examples and
  Benchmarks}, pages 123--169. Springer, 2019.
\newblock \doi{10.1007/11221\_2018\_8}.

\bibitem[Daleckii and Krein(2002)]{02DK}
J.~L. Daleckii and M.~Krein.
\newblock \emph{Stability of solutions of differential equations in Banach
  space}.
\newblock American Mathematical Society, 2002.

\bibitem[Demir(2000)]{00AD}
A.~Demir.
\newblock Floquet theory and non-linear perturbation analysis for oscillators
  with differential-algebraic equations.
\newblock \emph{International Journal of Circuit Theory and Applications},
  28\penalty0 (2):\penalty0 163--185, 2000.
\newblock
  \doi{10.1002/(SICI)1097-007X(200003/04)28:2<163::AID-CTA101>3.0.CO;2-K}.

\bibitem[Derguzov(1980)]{DE}
V.~I. Derguzov.
\newblock The spectrum of {H}amilton's operator with periodic coefficients.
\newblock \emph{Vestnik Leningrad Univ. Math}, 12:\penalty0 280--285, 1980.

\bibitem[Deshmukh(2010)]{10DV}
V.~Deshmukh.
\newblock Approximate stability analysis and computation of solutions of
  nonlinear delay differential algebraic equations with time periodic
  coefficients.
\newblock \emph{Journal of Vibration and Control}, 16\penalty0 (7-8):\penalty0
  1235--1260, 2010.
\newblock \doi{10.1177/1077546309341137}.

\bibitem[Figotin and Vitebskiy(2006)]{FV}
A.~Figotin and I.~Vitebskiy.
\newblock Slow light in photonic crystals.
\newblock \emph{Waves in Random and Complex Media}, 16\penalty0 (3):\penalty0
  293--382, 2006.
\newblock \doi{10.1080/17455030600836507}.

\bibitem[Friedberg et~al.(2019)Friedberg, Insel, and Spence]{19FIS}
S.~H. Friedberg, A.~J. Insel, and L.~E. Spence.
\newblock \emph{Linear Algebra}.
\newblock Pearson, 5 edition, 2019.

\bibitem[Hanke(1989)]{89MH}
M.~Hanke.
\newblock Linear differential-algebraic equations in spaces of integrable
  functions.
\newblock \emph{Journal of Differential Equations}, 79\penalty0 (1):\penalty0
  14--30, 1989.
\newblock \doi{10.1016/0022-0396(89)90111-3}.

\bibitem[Hanke and M{\"a}rz(2020)]{20HR}
M.~Hanke and R.~M{\"a}rz.
\newblock Basic characteristics of differential-algebraic operators.
\newblock In T.~Reis, S.~Grundel, and S.~Sch{\"o}ps, editors, \emph{Progress in
  Differential-Algebraic Equations II}, pages 39--70. Springer, 2020.
\newblock \doi{10.1007/978-3-030-53905-4\_2}.

\bibitem[Jackson(1975)]{75JJ}
J.~Jackson.
\newblock \emph{Classical Electrodynamics}.
\newblock Wiley, 1975.

\bibitem[Janas(1988)]{88JJ}
J.~Janas.
\newblock Inductive limit of operators and its applications.
\newblock \emph{Studia Mathematica}, 90:\penalty0 87--102, 1988.
\newblock \doi{10.4064/sm-90-2-87-102}.

\bibitem[Joannopoulos et~al.(2011)Joannopoulos, Johnson, Winn, and Meade]{11JJ}
J.~Joannopoulos, S.~Johnson, J.~Winn, and R.~Meade.
\newblock \emph{Photonic Crystals: Molding the Flow of Light - Second Edition}.
\newblock Princeton University Press, 2011.

\bibitem[Kadison and Ringrose(1986)]{86KR}
R.~V. Kadison and J.~R. Ringrose.
\newblock \emph{Fundamentals of the Theory of Operator Algebras}, volume~II.
\newblock Academic Press, Inc., 1986.

\bibitem[Kong(1986)]{86JK}
J.~Kong.
\newblock \emph{Electromagnetic Wave Theory}.
\newblock A Wiley-Interscience publication. Wiley, 1986.

\bibitem[Krein and Utochkina(1990)]{90KU}
S.~G. Krein and E.~O. Utochkina.
\newblock An implicit canonical equation in {H}ilbert space.
\newblock \emph{Ukrainian Mathematical Journal}, 42\penalty0 (3):\penalty0
  345--347, Mar 1990.
\newblock \doi{10.1007/BF01057021}.

\bibitem[Lamour et~al.(1998)Lamour, März, and Winkler]{98LM}
R.~Lamour, R.~März, and R.~Winkler.
\newblock How {F}loquet theory applies to index 1 differential algebraic
  equations.
\newblock \emph{Journal of Mathematical Analysis and Applications},
  217\penalty0 (2):\penalty0 372--394, 1998.
\newblock \doi{10.1006/jmaa.1997.5714}.

\bibitem[Lamour et~al.(2003)Lamour, März, and Winkler]{03LM}
R.~Lamour, R.~März, and R.~Winkler.
\newblock Stability of periodic solutions of index-2 differential algebraic
  systems.
\newblock \emph{Journal of Mathematical Analysis and Applications},
  279\penalty0 (2):\penalty0 475--494, 2003.
\newblock \doi{10.1016/S0022-247X(03)00024-6}.

\bibitem[Landau et~al.(1995)Landau, Lifshitz, and Pitaevskii]{95LL}
L.~Landau, E.~Lifshitz, and L.~Pitaevskii.
\newblock \emph{Electrodynamics of Continuous Media: Volume 8}.
\newblock Course of theoretical physics. Elsevier Science, 1995.

\bibitem[Lane(2013)]{13LS}
S.~Lane.
\newblock \emph{Categories for the Working Mathematician}.
\newblock Graduate Texts in Mathematics. Springer New York, 2013.

\bibitem[Lang et~al.(2018)Lang, Liegener, and Thiemann]{18LT}
T.~Lang, K.~Liegener, and T.~Thiemann.
\newblock Hamiltonian renormalisation {I}: derivation from
  {O}sterwalder{\textendash}{S}chrader reconstruction.
\newblock \emph{Classical and Quantum Gravity}, 35\penalty0 (24):\penalty0
  245011, nov 2018.
\newblock \doi{10.1088/1361-6382/aaec56}.

\bibitem[Lax(2002)]{02PL}
P.~Lax.
\newblock \emph{Functional Analysis}.
\newblock John Wiley \& Sons, 2002.

\bibitem[Lindell et~al.(1994)Lindell, Sihvola, Viitanen, and Tretyakov]{94LS}
I.~Lindell, A.~Sihvola, A.~Viitanen, and S.~Tretyakov.
\newblock \emph{Electromagnetic Waves in Chiral and Bi-isotropic Media}.
\newblock Antennas and Propagation Library. Artech House, 1994.

\bibitem[Lindell et~al.(1995)Lindell, Sihvola, Puska, and Ruotanen]{95LS}
I.~V. Lindell, A.~H. Sihvola, P.~Puska, and L.~H. Ruotanen.
\newblock Conditions for the parameter dyadics of lossless bianisotropic media.
\newblock \emph{Microwave and Optical Technology Letters}, 8\penalty0
  (5):\penalty0 268--272, 1995.
\newblock \doi{10.1002/mop.4650080515}.

\bibitem[Liu et~al.(2013{\natexlab{a}})Liu, Guenneau, and Gralak]{13GBa}
Y.~Liu, S.~Guenneau, and B.~Gralak.
\newblock Causality and passivity properties of effective parameters of
  electromagnetic multilayered structures.
\newblock \emph{Phys. Rev. B}, 88:\penalty0 165104, Oct. 2013{\natexlab{a}}.
\newblock \doi{10.1103/PhysRevB.88.165104}.

\bibitem[Liu et~al.(2013{\natexlab{b}})Liu, Guenneau, and Gralak]{13GBb}
Y.~Liu, S.~Guenneau, and B.~Gralak.
\newblock Artificial dispersion via high-order homogenization: magnetoelectric
  coupling and magnetism from dielectric layers.
\newblock \emph{Proc. R. Soc. A.}, 469:\penalty0 20130240, Oct.
  2013{\natexlab{b}}.
\newblock \doi{10.1098/rspa.2013.0240}.

\bibitem[M{\"a}rz(2015)]{15RM}
R.~M{\"a}rz.
\newblock Differential-algebraic equations from a functional-analytic
  viewpoint: A survey.
\newblock In A.~Ilchmann and T.~Reis, editors, \emph{Surveys in
  Differential-Algebraic Equations II}, pages 163--285. Springer, 2015.
\newblock \doi{10.1007/978-3-319-11050-9\_4}.

\bibitem[Marčenko(1974)]{74AM}
A.~V. Marčenko.
\newblock Inductive limits of linear spaces and operators, and their
  applications.
\newblock \emph{Moscow Univ. Math. Bull.}, 29\penalty0 (1/2):\penalty0
  150--156, 1974.

\bibitem[Marčenko(1975)]{75AM}
A.~V. Marčenko.
\newblock Selfadjoint differential operators with an infinite number of
  independent variables.
\newblock \emph{Math. USSR Sbornik.}, 25\penalty0 (2):\penalty0 259--275, 1975.
\newblock \doi{10.1070/SM1975v025n02ABEH002208}.

\bibitem[Nashed and Votruba(1976)]{76NV}
M.~Z. Nashed and G.~Votruba.
\newblock A unified operator theory of generalized inverses.
\newblock In M.~Z. Nashed, editor, \emph{Generalized Inverses and
  Applications}, pages 1--109. Academic Press, 1976.
\newblock \doi{10.1016/B978-0-12-514250-2.50005-6}.

\bibitem[Ning and Tan(2009)]{09NT}
J.~Ning and E.~L. Tan.
\newblock Generalized eigenproblem of hybrid matrix for {B}loch-{F}loquet waves
  in one-dimensional photonic crystals.
\newblock \emph{J. Opt. Soc. Am. B}, 26\penalty0 (4):\penalty0 676--683, Apr
  2009.
\newblock \doi{10.1364/JOSAB.26.000676}.

\bibitem[Passler and Paarmann(2017)]{17PP}
N.~C. Passler and A.~Paarmann.
\newblock Generalized 4 {\texttimes} 4 matrix formalism for light propagation
  in anisotropic stratified media: study of surface phonon polaritons in polar
  dielectric heterostructures.
\newblock \emph{J. Opt. Soc. Am. B}, 34\penalty0 (10):\penalty0 2128--2139,
  Oct. 2017.
\newblock \doi{10.1364/JOSAB.34.002128}.

\bibitem[Ramakrishna and Lakhtakia(2009)]{09RL}
S.~A. Ramakrishna and A.~Lakhtakia.
\newblock Spectral shifts in the properties of a periodic multilayered stack
  due to isotropic chiral layers.
\newblock \emph{Journal of Optics A: Pure and Applied Optics}, 11\penalty0
  (7):\penalty0 074001, May 2009.
\newblock \doi{10.1088/1464-4258/11/7/074001}.

\bibitem[Reed and Simon(1975)]{75RS}
M.~Reed and B.~Simon.
\newblock \emph{Methods of Modern Mathematical Physics II: Fourier Analysis,
  Self-Adjointness}, volume~2.
\newblock Academic Press, Inc., 1975.

\bibitem[Reed and Simon(1980)]{80RS}
M.~Reed and B.~Simon.
\newblock \emph{Methods of Modern Mathematical Physics I: Functional analysis},
  volume~1.
\newblock Academic Press, Inc., 1980.

\bibitem[Remling(2018)]{18CR}
C.~Remling.
\newblock \emph{Spectral Theory of Canonical Systems}.
\newblock De Gruyter Studies in Mathematics. De Gruyter, 2018.
\newblock \doi{10.1515/9783110563238}.

\bibitem[Riaza(2008)]{08RR}
R.~Riaza.
\newblock \emph{Differential-Algebraic Systems: Analytical Aspects and Circuit
  Applications}.
\newblock World Scientific, 2008.
\newblock \doi{10.1142/6746}.

\bibitem[Rofe-Beketov and Kholkin(2005)]{05RK}
F.~S. Rofe-Beketov and A.~M. Kholkin.
\newblock \emph{Spectral Analysis of Differential Operators}.
\newblock World Scientific, 2005.
\newblock \doi{10.1142/5788}.

\bibitem[Sakhnovich et~al.(2013)Sakhnovich, Sakhnovich, and Roitberg]{13SS}
A.~L. Sakhnovich, L.~A. Sakhnovich, and I.~Y. Roitberg.
\newblock \emph{Inverse Problems and Nonlinear Evolution Equations: Solutions,
  Darboux Matrices and Weyl-Titchmarsh Functions}.
\newblock De Gruyter, Berlin, Boston, 2013.
\newblock \doi{doi:10.1515/9783110258615}.

\bibitem[Sakhnovich(2012)]{12LS}
L.~Sakhnovich.
\newblock \emph{Spectral Theory of Canonical Differential Systems. Method of
  Operator Identities}.
\newblock Operator Theory: Advances and Applications. Birkh{\"a}user Basel,
  2012.
\newblock \doi{10.1007/978-3-0348-8713-7}.

\bibitem[Sebesty{\'e}n and Tarcsay(2019)]{19ST}
Z.~Sebesty{\'e}n and Z.~Tarcsay.
\newblock On the adjoint of {H}ilbert space operators.
\newblock \emph{Linear and Multilinear Algebra}, 67\penalty0 (3):\penalty0
  625--645, 2019.
\newblock \doi{10.1080/03081087.2018.1430120}.

\bibitem[Shipman and Welters(2012)]{12SW}
S.~P. Shipman and A.~Welters.
\newblock Resonance in anisotropic layered media.
\newblock In \emph{2012 International Conference on Mathematical Methods in
  Electromagnetic Theory}, pages 227--232, 2012.
\newblock \doi{10.1109/MMET.2012.6331235}.

\bibitem[Shipman and Welters(2013)]{13SW}
S.~P. Shipman and A.~T. Welters.
\newblock Resonant electromagnetic scattering in anisotropic layered media.
\newblock \emph{Journal of Mathematical Physics}, 54\penalty0 (10):\penalty0
  103511, 2013.
\newblock \doi{10.1063/1.4824686}.

\bibitem[Shipman and Welters(2016)]{16SW}
S.~P. Shipman and A.~T. Welters.
\newblock Pathological scattering by a defect in a slow-light periodic layered
  medium.
\newblock \emph{Journal of Mathematical Physics}, 57\penalty0 (2):\penalty0
  022902, 2016.
\newblock \doi{10.1063/1.4941137}.

\bibitem[Sihvola(2007)]{07AS}
A.~Sihvola.
\newblock Metamaterials in electromagnetics.
\newblock \emph{Metamaterials}, 1\penalty0 (1):\penalty0 2--11, 2007.
\newblock \doi{10.1016/j.metmat.2007.02.003}.

\bibitem[Sihvola and Lindell(2009)]{09SL}
A.~Sihvola and I.~V. Lindell.
\newblock Bianisotropic materials and {PEMC}.
\newblock In F.~Capolino, editor, \emph{Theory and Phenomena of Metamaterials},
  chapter~26, pages 1--7. CRC Press, 2009.
\newblock \doi{10.1201/9781420054262-26}.

\bibitem[Weidmann(1980)]{80JW}
J.~Weidmann.
\newblock \emph{Linear operators in Hilbert Spaces}.
\newblock Graduate Texts in Mathematics. Springer-Verlag, 1980.

\bibitem[Weidmann(1987)]{87JW}
J.~Weidmann.
\newblock \emph{Spectral theory of ordinary differential operators}.
\newblock Springer-Verlag, 1987.

\bibitem[Welters(2011)]{AW}
A.~Welters.
\newblock \emph{On the mathematics of slow light}.
\newblock PhD thesis, University of California, Irvine, 2011.

\bibitem[Welters et~al.(2014)Welters, Avniel, and Johnson]{14AW}
A.~Welters, Y.~Avniel, and S.~G. Johnson.
\newblock Speed-of-light limitations in passive linear media.
\newblock \emph{Phys. Rev. A}, 90:\penalty0 023847, Aug 2014.
\newblock \doi{10.1103/PhysRevA.90.023847}.

\bibitem[Yakubovich and Starzhinskii(1975)]{75YSv1}
V.~A. Yakubovich and V.~M. Starzhinskii.
\newblock \emph{Linear Differential Equations with Periodic Coefficients},
  volume~1.
\newblock Wiley, 1975.

\bibitem[Yariv and Yeh(1977)]{77PYb}
A.~Yariv and P.~Yeh.
\newblock Electromagnetic propagation in periodic stratified media. {II}.
  {B}irefringence, phase matching, and x-ray lasers.
\newblock \emph{J. Opt. Soc. Am.}, 67\penalty0 (4):\penalty0 438--447, Apr.
  1977.
\newblock \doi{10.1364/JOSA.67.000438}.

\bibitem[Yeh(1979)]{79PY}
P.~Yeh.
\newblock Electromagnetic propagation in birefringent layered media.
\newblock \emph{J. Opt. Soc. Am.}, 69\penalty0 (5):\penalty0 742--756, May
  1979.
\newblock \doi{10.1364/JOSA.69.000742}.

\bibitem[Yeh(2005)]{05PY}
P.~Yeh.
\newblock \emph{Optical Waves in Layered Media}.
\newblock Wiley Series in Pure and Applied Optics. Wiley, 2005.

\bibitem[Yeh et~al.(1977)Yeh, Yariv, and Hong]{77PYa}
P.~Yeh, A.~Yariv, and C.-S. Hong.
\newblock Electromagnetic propagation in periodic stratified media. {I}.
  {G}eneral theory.
\newblock \emph{J. Opt. Soc. Am.}, 67\penalty0 (4):\penalty0 423--438, Apr.
  1977.
\newblock \doi{10.1364/JOSA.67.000423}.

\bibitem[Zettl(2005)]{12Ze}
A.~Zettl.
\newblock \emph{Sturm-{L}iouville theory}.
\newblock Mathematical Surveys and Monographs, vol.\ 121, American Mathematical
  Society, 2005.

\bibitem[Zhang(2005)]{05FZ}
F.~Zhang, editor.
\newblock \emph{The {Schur} Complement and its Applications}.
\newblock Springer-Verlag, 2005.
\newblock \doi{10.1007/b105056}.

\bibitem[Zhuk(2007)]{07SZ}
S.~M. Zhuk.
\newblock Closedness and normal solvability of an operator generated by a
  degenerate linear differential equation with variable coefficients.
\newblock \emph{Nonlinear Oscillations}, 10\penalty0 (4):\penalty0 469--486,
  Oct 2007.
\newblock \doi{10.1007/s11072-008-0005-9}.

\end{thebibliography}

%
%
%

\end{document}